\documentclass{amsart}

\usepackage{amstext,amsmath,amsthm,mathrsfs,amssymb}

\usepackage{txfonts}
\usepackage{enumitem}
\usepackage[T1]{fontenc}

\usepackage{hyperref}

\newcommand{\longra}{\ensuremath{\longrightarrow}}
\newcommand{\ra}{\ensuremath{\Rightarrow}}

\newcommand{\lra}{\ensuremath{\Leftrightarrow}}
\newcommand{\Longra}{\ensuremath{\Longrightarrow}}

\DeclareMathOperator{\supp}{supp}

\newcommand{\C}{\varmathbb{C}}
\newcommand{\R}{\varmathbb{R}}
\newcommand{\N}{\varmathbb{N}}

\newcommand{\K}{\varmathbb{K}}

\newcommand{\E}{\varmathbb{E}}

\newcommand{\norm}[1]{\left|\left|#1\right|\right|}

\newcommand{\ip}[2]{\langle #1, #2 \rangle}
\newcommand{\ipp}[2]{\left( #1 \,|\, #2 \right)}

\theoremstyle{plain}
\newtheorem{thm}{Theorem}[section]
\newtheorem{lemma}[thm]{Lemma}
\newtheorem{prop}[thm]{Proposition}
\newtheorem{cor}[thm]{Corollary}

\newtheorem{fact}[thm]{Fact}

\theoremstyle{definition}
\newtheorem{definitie}[thm]{Definition}
\newtheorem{ex}[thm]{Example}

\theoremstyle{remark}
\newtheorem{rmk}[thm]{Remark}

\numberwithin{equation}{section}

\begin{document}

\title[Banach Space-Valued Extensions of Linear Operators on $L^{\infty}$]{Banach Space-Valued Extensions of Linear \\ Operators on $L^{\infty}$}

\author{Nick Lindemulder}
\address{Delft Institute of Applied Mathematics \\
Delft University of Technology \\
P.O. Box 5031 \\
2600 GA Delft \\
The Netherlands}
\email{N.Lindemulder@tudelft.nl}

\subjclass{Primary 46E40; Secondary 46E30, 46B10}

\keywords{adjoint operator, Banach function space, Banach limit, conditional expectation, domination, dual pair, $L^{\infty}$, positive operator, vector-valued extension, reflexivity, Schauder basis}

\begin{abstract}
Let $E$ and $G$ be two Banach function spaces, let $T \in \mathcal{L}(E,Y)$, and let $\ip{X}{Y}$ be a Banach dual pair.
In this paper we give conditions for which there exists a (necessarily unique) bounded linear operator $T_{Y} \in \mathcal{L}(E(Y),G(Y))$
with the property that
\[
\ip{x}{T_{Y}e} = T\ip{x}{e}, \quad\quad\quad e \in E(Y), x \in X.
\]

The first main result states that, in case $\ip{X}{Y} = \ip{Y^{*}}{Y}$ with $Y$ a reflexive Banach space, for the existence of $T_{Y}$
it sufficient that $T$ is dominated by a positive operator. We furthermore show that for $Y$ within a wide class of Banach spaces (including the Banach lattices) the validity of this extension result for $E=\ell^{\infty}$ and $G = \K$ even characterizes the reflexivity of $Y$.

The second main result concerns the case that $T$ is an adjoint operator on $L^{\infty}(A)$: we assume that $E = L^{\infty}(A)$ for a semi-finite measure space $(A,\mathscr{A},\mu)$, that $\ip{F}{G}$ is a K\"othe dual pair, and that $T$ is $\sigma(L^{\infty}(A),L^{1}(A))$-to-$\sigma(G,F)$ continuous. In this situation we show that $T_{Y}$ also exists provided that $T$ is dominated by a positive operator.
As an application of this result we consider conditional expectation on Banach space-valued $L^{\infty}$-spaces.
\end{abstract}

\maketitle
\section{Introduction}

Given two measure spaces $(A,\mathscr{A},\mu)$ and $(B,\mathscr{B},\nu)$, $p,q \in [1,\infty]$,
a bounded linear operator $T \in \mathcal{L}(L^{p}(A),L^{q}(B))$, and a Banach space $Y$,
one can ask the question whether $T$ has a $Y$-valued extension $T_{Y} \in \mathcal{L}(L^{p}(A;Y),L^{q}(B;Y))$ in the sense that there exists a (necessarily unique) bounded linear operator $T_{Y} \in \mathcal{L}(L^{p}(A;Y),L^{q}(B;Y))$ satisfying
\begin{equation}\label{intro:eq:extension_problem}
\ip{T_{Y}f}{y^{*}} = T\ip{f}{y^{*}}, \quad\quad\quad f \in L^{p}(A;Y), y^{*} \in Y^{*}.
\end{equation}
Note that $T_{Y}$ (if it exists) extends the tensor extension $T \otimes I_{Y}$ of $T$, which is the linear operator from the algebraic tensor product $L^{p}(A) \otimes Y$ to the algebraic tensor product $L^{q}(B) \otimes Y$
determined by the formula
\[
(T \otimes I_{Y})(f \otimes y) = Tf \otimes y, \quad\quad\quad f \in L^{p}(A), y \in Y.
\]

For $p \in [1,\infty[$ it holds that $L^{p}(A) \otimes Y$ is dense in $L^{p}(A;Y)$, so that $T_{Y}$ is just the unique extension of $T \otimes I_{Y}$ to a bounded linear operator from $L^{p}(A;Y)$ to $L^{q}(B;Y)$.
It is well known that, in this case, the extension $T_{Y}$ exists if $T$ is dominated by a positive operator (i.e. there exists a positive operator $S \in \mathcal{L}(L^{p}(A),L^{q}(B))$ such that $|Tf| \leq S|f|$ for all $f \in L^{p}(A)$)
or $Y$ is (isomorphic to) a Hilbert space; this can, for instance, be found in \cite[Subsection~4.5.c]{Grafakos} (also see \cite{Rubio}).
Another extension result says that, if $p = q \in [1,\infty[$, $A=B$, and $Y$ is isomorphic to a closed linear subspace of a quotient of a space $L^{p}(C)$, then the extension $T_{Y}$ exists for every $T \in \mathcal{L}(L^{p}(A))$; see \cite{herz}.
There also exist examples in which $T_{Y}$ does not exist. In fact, for some operators $T$ the existence of the $Y$-valued extension $T_{Y}$ characterizes $Y$ as being isomorphic to a Hilbert space or characterizes different geometric properties of the Banach space $Y$; for example, the fact that the Fourier-Plancherel transform $\mathscr{F}$ on $L^{2}(\R^{d})$ has a $Y$-valued extension $\mathscr{F}_{Y}$ on $L^{2}(\R^{d};Y)$ if and only if $Y$ is isomorphic to a Hilbert space is due to Kwapi\'en \cite{Kwapien}, and the characterization of the UMD Banach spaces as those Banach spaces for which the Hilbert transform (on $L^{p}(\R^{d})$) has an extension to a bounded linear operator on $L^{p}(\R^{d};Y)$ for some/all $p \in ]1,\infty[$ is due to Burkholder \cite{Burkholder} (sufficiency of UMD) and Bourgain \cite{Bourgain} (necessity of UMD) (see also the survey paper \cite{Burkholder_survey}). For Banach space-valued extension results for singular integral operators (in the UMD setting) we refer to \cite{Hytonen1} (and the references therein).

It seems that the extension problem \eqref{intro:eq:extension_problem} has not been considered in the literature for $p = \infty$.
In this paper we will obtain analogues for $p=\infty$ of the just mentioned results for $p<\infty$ about Banach space-valued extensions of operators dominated by a positive operator and Hilbert space-valued extensions of arbitrary bounded linear operators; we will in fact consider the extension problem in more general settings then discussed in this introduction. In the Banach space setting we will mainly consider the extension problem in two directions.

The first direction is concerned with $Y$-valued extensions $T$ for $Y$ a reflexive Banach space,
with as main result in this direction (Theorem \ref{thm:main_result2_extension_reflexive}) the existence of $T_{Y}$ plus a norm estimate in case that $T$ is dominated by a positive operator. Via a result of Zippin \cite{Zippin}, which says that every separable reflexive Banach space embeds into a reflexive Banach space with a Schauder basis, we can reduce the situation to the case that $Y$ is a reflexive Banach space with a Schauder basis. This basis can then be used to define $T_{Y}$. We show that for the special case $A=\N$, $B=\{0\}$, so that $L^{\infty}(A) = \ell^{\infty}$ and $L^{q}(B) = \K$ (the scalar field),
and $Y \in \{c_{0},\ell_{1}\}$, the extension $T_{Y}$ fails to exist when $T \in \mathcal{L}(\ell^{\infty},\K) = (\ell^{\infty})^{*}$
is a Banach limit (so in particular $T \geq 0$). As a consequence of a generalization of a classical result due to Lozanovski on the reflexivity on Banach lattices we find that, given a Banach limit $T \in \mathcal{L}(\ell^{\infty},\K)$, for $Y$ within a large class of Banach spaces (including the Banach lattices), $Y$ is reflexive if and only if the $Y$-valued extension $T_{Y} \in \mathcal{L}(\ell^{\infty}(Y),Y)$ of $T$ exists (Corollary~\ref{cor:char_ext_Banach_limit}).

In the second direction we consider arbitrary $Y$ under the additional assumption that $T$ is an adjoint operator. To be more precise,
suppose that $(A,\mathscr{A},\mu)$ and $(B,\mathscr{B},\nu)$ are both $\sigma$-finite and that $q \in ]1,\infty]$, so that we have canonical isometric isomorphisms $L^{\infty}(A) \cong (L^{1}(A))^{*}$ and $L^{q}(B) \cong (L^{q'}(B))^{*}$ (with $\frac{1}{q}+\frac{1}{q'} = 1$). Let $T = S^{*} \in \mathcal{L}(L^{p}(A),L^{q}(B))$ be the adjoint of $S \in \mathcal{L}(L^{q'}(B),L^{1}(A))$ and let $Y$ be an arbitrary Banach space. As the main result (Theorem \ref{thm:main_result}) in this direction we will show, in case that $T$ is dominated by a positive operator, the existence of both $T_{Y}$ and $S_{Y^{*}}$ together with norm estimates plus the adjoint relation
\[
\int_{B}\ip{T_{Y}f}{g}\,d\nu = \int_{A}\ip{f}{S_{Y^{*}}g}\,d\mu, \quad\quad\quad f \in L^{\infty}(A;Y), g \in L^{q'}(B;Y^{*}).
\]
The idea is to first obtain $S_{Y^{*}}$ by bounded extension of $S \otimes I_{Y^{*}}$ and then show that the Banach space adjoint $(S_{Y^{*}})^{*}$ of this extension restricts to an operator $L^{\infty}(A;Y) \subset (L^{1}(A;Y^{*}))^{*} \longra L^{q}(B;Y) \subset (L^{q'}(B;Y^{*}))$, which is the desired extension $T_{Y}$.
An example and motivation for this extension problem is the conditional expectation operator on Banach space-valued $L^{\infty}$-spaces.

The paper is organized as follows. In Section 2 we will first treat some necessary preliminaries. In Section 3 we present the results of this paper, with a formulation of the general extension problem and some basics, in the second subsection the extension problem for
reflexive $Y$, in the third subsection the extension problem of adjoint operators on $L^{\infty}$ for general Banach dual pairs, and in the fourth (and last) subsection the extension problems in the Hilbert space setting. Next, the proof of Theorem \ref{thm:main_result} is given is Section 4. Finally, as an application and motivation, we consider the conditional expectation operator on Banach-valued $L^{\infty}$-spaces in Section 5.

\paragraph{\textbf{Conventions and notations.}} Throughout this paper we fix a field $\K \in \{ \R, \C \}$ and assume that all spaces are over this field $\K$. For a normed space $X$ we denote by $B_{X}$ its closed unit ball. We furthermore write $\ell^{p} = \ell^{p}(\N)$, $1 \leq p \leq \infty$. For two Banach lattices $E$ and $F$ we denote by $M(E,F)$ the set of all linear operators from $E$ to $F$ which are dominated by a positive operator.

\section{Preliminaries}

\subsection{Banach Dual Pairs}

For the general theory of dual systems we refer to \cite{schaefer}.

A \emph{Banach duality (pairing)} between two Banach spaces $X$ and $Y$ is a bounded bilinear form $\ip{\,\cdot\,}{\,\cdot\,}:X \times Y \longra \K$ for which the induced linear maps $x \mapsto \ip{x}{\,\cdot\,},X \longra Y^{*}$ and $y \mapsto \ip{\,\cdot\,}{y}, Y \longra X^{*}$ are injections.
A \emph{Banach dual pair} is a triple $(X,Y,\ip{\,\cdot\,}{\,\cdot\,})$ consisting of two Banach spaces $X$ and $Y$ together with Banach duality
$\ip{\,\cdot\,}{\,\cdot\,}$ between them. We write $\ip{X}{Y} = (X,Y,\ip{\,\cdot\,}{\,\cdot\,})$. We call a Banach dual pair $\ip{X}{Y}$ \emph{norming} if
\[
\norm{x} = \sup_{y \in B_{Y}}\ip{x}{y} \quad \mbox{and} \quad \norm{y} = \sup_{x \in B_{X}}\ip{x}{y}.
\]
Note that in this case $X$ can be viewed as a closed subspace of $Y^{*}$ and $Y$ can be viewed as a closed subspace of $X^{*}$.

Let $\ip{X}{Y}$ be a Banach dual pair. Then the locally convex Hausdorff topology on $X$ generated by the family of seminorms $\{\,|\ip{\,\cdot\,}{y}|\,\}_{y \in Y}$ is called the \emph{weak topology} on $X$ generated by the pairing $\ip{X}{Y}$ and is denoted by $\sigma(X,Y)$.
The \emph{weak topology} on $Y$ generated by $\ip{X}{Y}$ is defined similarly and is denoted by $\sigma(Y,X)$. The topological dual of $(X,\sigma(X,Y))$ and $(Y,\sigma(Y,X))$ are $Y$ and $X$, respectively; that is, $(X,\sigma(X,Y))' = \{ \ip{\,\cdot\,}{y} \mid y \in Y \}$ and
$(Y,\sigma(Y,X))' = \{ \ip{x}{\,\cdot\,} \mid x \in X \}$. We shall always make the identifications $(X,\sigma(X,Y))' = Y$ and $(Y,\sigma(Y,X))' = X$.

A linear subspace $Z$ of $Y$ is $\sigma(Y,X)$ dense in $Y$ if and only if $Z$ separates the points of $X$, i.e., for every nonzero $x \in X$ there exists a $z \in Z$ with $\ip{x}{z} \neq 0$.

Recall that $\sigma(X,X^{*})$ is called the \emph{weak topology} on $X$ and that $\sigma(X^{*},X)$  is called the \emph{weak}$^{*}$ topology on $X^{*}$.

Suppose that we are given two Banach dual pairs $\ip{X_{1}}{Y_{1}}$ and $\ip{X_{2}}{Y_{2}}$ and a linear operator $S$ from $X_{1}$ to $X_{2}$. Viewing $Y_{i}$ as vector subspace of the algebraic dual $X_{i}^{\#}$ of $X_{i}$ ($i=1,2$), $S$ is continuous as an operator $S:(X_{1},\sigma(X_{1},Y_{1})) \longra (X_{2},\sigma(X_{2},Y_{2}))$ if and only if its algebraic adjoint $S^{\#}$ maps
$Y_{2}$ into $Y_{1}$; we say that $S$ is $\sigma(X_{1},Y_{1})$-to-$\sigma(X_{2},Y_{2})$ continuous. In this situation, the restriction
$S':Y_{2} \longra Y_{1}$ of $S^{\#}$ is called the \emph{adjoint} of $S$ with respect to the dualities $\ip{X_{1}}{Y_{1}}$ and $\ip{X_{2}}{Y_{2}}$, and it is a $\sigma(Y_{2},X_{2})$-to-$\sigma(Y_{1},X_{1})$ continuous linear operator whose adjoint is $S''=(S')'=S$.
The operators $S$ and $S'$ are automatically bounded operators as a consequence of the closed graph theorem.
Finally, note that if we a priori know $S$ to be bounded and view $Y_{i}$ as vector subspace of the norm dual $X_{i}^{*}$ of $X_{i}$ ($i=1,2$), then $S$ is $\sigma(X_{1},Y_{1})$-to-$\sigma(X_{2},Y_{2})$ continuous if and only if its Banach space adjoint $S^{*}$ maps $Y_{2}$ into $Y_{1}$.

\subsection{Duality and Schauder Bases for Banach Spaces}

In Subsection 3.1 we will use Schauder bases in order to define Banach space-valued extensions of linear operators; for the basis of the theory of Schauder bases we refer to \cite{Albiac_Kalton}. The following well known facts will be important for us in this direction.

\begin{fact}\label{fact:bases_reflexivity}\
\begin{itemize}
\item[(I)] Let $X$ be a Banach space. If $X$ has Schauder basis $\{b_{n}\}_{n \in \N}$, then $X$ is reflexive if and only if
$\{b_{n}\}_{n \in \N}$ is both boundedly-complete and shrinking.
\item[(II)] Every separable reflexive Banach space is isomorphic to a closed linear subspace of a reflexive Banach space with a Schauder basis.
\item[(III)] Let $X$ be a closed linear subspace of a Banach lattice $E$. If $X$ is complemented in $E$ or $E$ has an order continuous norm,
then the following statements are equivalent:
\begin{itemize}
\item[(a)] $X$ is reflexive.
\item[(b)] $X$ does not have linear subspaces isomorphic to $c_{0}$ or $\ell^{1}$.
\end{itemize}
\end{itemize}
\end{fact}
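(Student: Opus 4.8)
All three assertions are classical, so the plan is to assemble the appropriate pieces of Banach space and Banach lattice structure theory rather than to argue from first principles; the only genuinely delicate point is the lattice statement (III).

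For (I) the plan is to run James's duality argument for bases, as in \cite{Albiac_Kalton}. Let $\{b_{n}^{*}\}$ be the coordinate functionals and $P_{N}$ the partial-sum projections. I would first record the standard equivalence that $\{b_{n}\}$ is shrinking exactly when $\{b_{n}^{*}\}$ is a Schauder basis of $X^{*}$, proved by testing an $x^{*} \in X^{*}$ against the tail projections and observing that shrinking says precisely $\norm{(I-P_{N})^{*}x^{*}} \to 0$. The second step, under the shrinking hypothesis, is to identify $X^{**}$ isometrically with the space of scalar sequences $(a_{n})$ satisfying $\sup_{N}\norm{\sum_{n \leq N}a_{n}b_{n}} < \infty$, the identification sending $x^{**}$ to $(\ip{b_{n}^{*}}{x^{**}})_{n}$ with partial sums $P_{N}^{**}x^{**}$. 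Under this identification the canonical image of $X$ is exactly the set of those sequences for which $\sum_{n}a_{n}b_{n}$ converges in $X$, so $X = X^{**}$ holds if and only if every sequence with uniformly bounded partial sums already converges --- which is the definition of bounded completeness. Combining the two steps yields the stated characterization of reflexivity.

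Assertion (II) is Zippin's embedding theorem \cite{Zippin} and is genuinely deep; I would not reprove it but invoke it directly. (From a separable reflexive $X$ one constructs a reflexive superspace with a basis, and verifying reflexivity of the ambient space is the hard part of that construction.)

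The substantive part is (III). The implication (a) $\Rightarrow$ (b) is immediate: reflexivity passes to closed subspaces, and neither $c_{0}$ nor $\ell^{1}$ is reflexive, so a reflexive $X$ can contain a copy of neither. For (b) $\Rightarrow$ (a) the plan is to reduce to the classical theorem of Lozanovskii that a Banach lattice is reflexive if and only if it contains no subspace isomorphic to $c_{0}$ or to $\ell^{1}$. Given a bounded sequence $(x_{n})$ in $X$, the absence of $\ell^{1}$ lets me pass, by Rosenthal's $\ell^{1}$-theorem, to a weakly Cauchy subsequence; by Eberlein--\v{S}mulian it then suffices to show such a sequence converges weakly in $X$, for then $B_{X}$ is weakly compact and $X$ is reflexive. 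Thus everything comes down to excluding a weakly Cauchy but not weakly convergent sequence in $X$. The hard part, and exactly where the two alternative hypotheses do their work, is to show that such a sequence manufactures a copy of $c_{0}$ inside $X$: in a Banach lattice a weakly Cauchy non-weakly-convergent sequence fails to be relatively weakly compact, and the lattice disjointification procedure then produces a disjoint sequence equivalent to the $c_{0}$-basis. The subtlety is that disjointification uses lattice operations and therefore leaves $X$ (which is not a sublattice); I would handle this by arranging the disjoint $c_{0}$-sequence to be a small-in-norm perturbation of the consecutive differences $x_{n_{k+1}}-x_{n_{k}}$, which do lie in $X$, so that by the principle of small perturbations those differences themselves span a copy of $c_{0}$ in $X$. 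Order continuity of the norm on $E$ supplies the uniform-integrability and relative-weak-compactness dichotomy needed to carry out this disjointification, while in the complemented case one instead transports the obstruction through the projection $P \colon E \to X$. Either way the resulting embedding $c_{0} \hookrightarrow X$ contradicts (b), which completes the reduction.
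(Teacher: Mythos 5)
The paper does not actually prove this Fact: all three items are stated as known results, with (I) referred to \cite{Albiac_Kalton}, (II) to Zippin \cite{Zippin}, and (III) to the Lozanovskii--Tzafriri--Meyer-Nieberg circle of results via \cite{Wotjowicz}. Your proposal therefore does strictly more than the paper, and the routes you sketch are the standard ones in each case, so there is no conflict of approach. Two caveats on completeness. In (I), your two steps establish only that, \emph{for a shrinking basis}, reflexivity is equivalent to bounded completeness; to close the stated equivalence you still need the separate implication that reflexivity forces the basis to be shrinking (e.g.\ observe that if $\norm{(I-P_{N})^{*}x^{*}} \not\to 0$ one extracts normalized blocks $u_{N}$ supported far out in the basis with $|x^{*}(u_{N})| \geq \delta$; in a reflexive space such blocks tend weakly to $0$, a contradiction). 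In (III), the implication (a)$\Rightarrow$(b) and the reduction via Rosenthal's $\ell^{1}$-theorem and Eberlein--\v{S}mulian are fine, and your outline of (b)$\Rightarrow$(a) --- manufacturing a copy of $c_{0}$ in $X$ from a weakly Cauchy, non-weakly-convergent sequence by lattice disjointification followed by a small-perturbation argument, with order continuity (resp.\ the projection onto $X$) supplying the weak-compactness dichotomy --- is indeed the shape of the Tzafriri/Meyer-Nieberg argument; but that disjointification-and-perturbation step is where essentially all the work lies and is only gestured at here. Since the paper itself treats all three items as citations, this level of detail is acceptable, but you should be clear that what you have written for (I) and (III) is a sketch keyed to the literature rather than a self-contained proof.
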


A reference for (I) is \cite[Theorem~3.2.13]{Albiac_Kalton}. (II) is due to Zippin \cite{Zippin}.
(III) is a generalization due to Tzafriri and Meyer-Nieberg of a result of Lozanovski about the reflexivity of Banach lattices;
see \cite{Wotjowicz} (and the references therein). For a version of this reflexivity result for finitely generated Banach $C(K)$-modules
we refer to \cite{Kitover}.

\subsection{Riesz Spaces and Banach Lattices}

For to the theory of Riesz spaces and Banach lattices we refer to the books \cite{positive},
\cite{Meyer-Nieberg}. Let us recall the following notation, definitions and facts.

Given a measure space $(A,\mathscr{A},\mu)$, we denote by $L^{0}(A)=L^{0}(A,\mathscr{A},\mu;\K)$ the $\K$-Riesz space
of all $\mu$-a.e. equivalence classes of $\K$-valued $\mathscr{A}$-measurable functions on $A$ with its natural lattice operations.

We say that a linear operator $T:E \longra F$ between two Banach lattices is \emph{dominated} by a positive operator $S \in \mathcal{L}(E,F)$
if it holds that $|Te| \leq S|e|$ for all $e \in E$; we also say that $S$ is a dominant for $T$ and we write $T \lesssim S$.
We denote by $\mathrm{mat}(T)$ the set of all dominants of $T$; then $\mathrm{mat}(T) \subset \mathcal{L}_{b}(E,F)^{+}$. If there is a least element in $\mathrm{mat}(T)$ with respect to the ordering of $\mathcal{L}_{b}(E,F)$ then it is called the \emph{least dominant} of $T$ and is denoted by $|T|$. We denote by $M(E,F)$ the space of all linear operators $T:E \longra F$ for which $\mathrm{mat}(T) \neq \emptyset$. Then $M(E,F) \subset \mathcal{L}(E,F)$. For $T \in M(E,F)$ we define
\[
\norm{T}_{M(E,F)} := \inf\{ \,\norm{S}\,:\, S \in \mathrm{maj}(T) \}.
\]
Then $\norm{T} \leq \norm{T}_{M(E,F)}$ for all $T \in M(X,Y)$ and $\norm{T}_{M(E,F)}= \norm{\,|T|\,}$ whenever $|T|$ exists;
in particular $\norm{T} = \norm{T}_{M(E,F)}$ when $T \geq 0$.

A linear operator $T:E \longra F$ between two Banach lattices is called \emph{regular} if it is a linear combination of positive operators. We denote by $\mathcal{L}_{r}(E,F)$ the space of all such operators. Then we have $\mathcal{L}_{r}(E,F) \subset M(E,F) \subset \mathcal{L}(E,F)$
and we write $\norm{T}_{r} := \norm{T}_{M(E,F)}$ for $T \in \mathcal{L}_{r}(E,F)$. If $F$ is Dedekind complete, then we have
$\mathcal{L}_{r}(E,F) = M(E,F)$.

A Banach lattice $E$ is called a \emph{KB-space} (\emph{Kantorovich-Banach space}) if every increasing norm bounded sequence of $E^{+}$ is norm convergent. It is not difficult to see that a Banach lattice $E$ is a KB-space if and only if every increasing norm bounded net of $E^{+}$ is norm convergent. Every reflexive Banach lattice is an example of a KB-space. An other example is the Lebesgue space $L^{1}(A)$.

A Banach lattice $E$ is said to have a \emph{Levi norm} if every increasing norm bounded net of $E^{+}$ has a supremum in $E$.
When this property only holds for sequences, then we say that $E$ has a \emph{sequentially Levi norm}.
KB-spaces are examples of Banach lattices having a Levi norm. An other example is $L^{\infty}(A)$ on a Maharam measure space
$(A,\mathscr{A},\mu)$; see the next subsection for the notion of Maharam measure space. Note that a Banach lattice with a Levi norm
must be Dedekind complete

A Banach lattice $E$ is said to have a Fatou norm if $\sup_{\alpha}\norm{x_{\alpha}} = \norm{x}$ whenever $\{x_{\alpha}\}_{\alpha} \subset E$ is an increasing net with supremum $x$.

\subsection{Measure Theory}

\paragraph{General measure theory.} For the content of this paragraph we refer to \cite{fremlin2}.

A measure space $(A,\mathscr{A},\mu)$ is called
\begin{itemize}
\item \emph{semi-finite} if for every $B \in \mathscr{A}$ with $\mu(B)>0$ there exists a $C \subset B, C \in \mathscr{A}$ with $0<\mu(C)<\infty$;
\item \emph{decomposable} (or \emph{strictly localizable})\footnote{Such measure spaces are also said to satisfy the \emph{direct sum property}.} if there exists a family $\{A_{i}\}_{i \in I}$ of pairwise disjoint sets in $\mathscr{A}$ such that $\mu(A_{i}) \in ]0,\infty[$ for all $i \in I$, and for each $B \in \mathscr{A}$ of finite measure there exists countable subset $I_{0} \subset I$ of indices and a $\mu$-null set $N \in \mathscr{A}$ such that $A = \bigcup_{i \in I_{0}}(B \cap A_{i}) \cup N$;
\item \emph{Maharam} (or \emph{localizable}) if it is semi-finite and if for every $\mathscr{E} \subset \mathscr{A}$ there is a $H \in \mathscr{A}$ such that (i) $E \setminus H$ is neglible for every $E \in \mathscr{E}$ and (ii)
if $G \in \mathscr{A}$ and $E \setminus G$ is neglible for every $E \in \mathscr{E}$, then $H \setminus G$ is neglible.
\end{itemize}
Regarding the relation between the different types of measure spaces, the following chain of implications holds true
\cite[Theorem~211L]{fremlin2}:
\begin{equation}
\sigma-\mathrm{finite} \:\Longra\: \mathrm{decomposable} \:\Longra\: \mathrm{Maharam} \:\Longra\: \mathrm{semi-finite}.
\end{equation}
A more elegant equivalent definition of Maharam measure space is via the measure algebra of $(A,\mathscr{A},\mu)$, which is obtained from $\mathscr{A}$ by identifying sets which are $\mu$-a.e. equal: a measure space is Maharam if and only if its measure algebra is Maharam, i.e. is a semi-finite measure algebra which is Dedekind complete as a Boolean algebra (see \cite{fremlin_top-riesz&mt} and \cite{fremlin}).

The canonical linear map $g \mapsto \Lambda_{g}, L^{\infty}(A) \longra (L^{1}(A))^{*}$ is an injection if and only if
$(A,\mathscr{A},\mu)$ is semi-finite, in which case it is an isometry, and this map is a bijection if and only if $(A,\mathscr{A},\mu)$ is Maharam, in which case it is an isometric isomorphism; see \cite[Theorem~243G]{fremlin2}. The sufficiency of Maharamness in the latter statement is in fact a special case of Fact \ref{thm:Kothe_dual}.  Another important characterization of the Maharam measure spaces among the semi-finite measure spaces is \cite[Theorem~241.G.(b)]{fremlin2}: a semi-finite measure space $(A,\mathscr{A},\mu)$ is Maharam if and only if $L^{0}(A)$ is Dedekind complete.

\paragraph{Banach space-valued measurability.}

Let $(A,\mathscr{A},\mu)$ be a measure space and let $X$ be a Banach space.

We denote by
\[
\mathrm{St}(A;X) := \left\{ \sum_{j=1}^{n}1_{A_{j}} \otimes x_{j} \,:\, A_{j} \in \mathscr{A}\:\mathrm{disjoint}\:, x_{j} \in X \right\}
\]
the vector space of $X$-valued step functions; here we use the usual notational convention to view, given a function $f:A \longra \K$, $f \otimes x$ as the function $a \mapsto f(a)x,\, A \longra X$.
A function $f:A \longra X$ is called strongly measurable if it is the pointwise limit of a sequence $(f_{k})_{k \in \N} \subset \mathrm{St}(A;X)$; it can be shown that the sequence $(f_{k})_{k}$ can be chosen such that $\norm{f_{k}}_{X} \leq \norm{f}_{X}$.
The well known Pettis measurability theorem says that a function $f:A \longra X$ is strongly measurable if and only if $f$ is separably valued and $\ip{f}{x^{*}}$ is measurable for all $x^{*}$ in some weak$^{*}$ dense subspace $Z$ of $X^{*}$; consequently, if $f:A \longra X$ is strongly measurable and takes its values in a closed linear subspace $Y$ of $X$, then $f$ is also strongly measurable as a function $A \longra Y$.
We denote by $L^{0}(A;X)$ the vector space of all $\mu$-a.e. equivalence classes of strongly measurable functions $f:A \longra X$.
We also view $L^{0}(A;X)$ as the vector space of all $\mu$-a.e. equivalence classes
of functions $g:A \longra X$ which are $\mu$-a.e. equivalent to a strongly measurable function on $f:A \longra X$.

\subsection{Banach Function Spaces}

For the theory of Banach function spaces we refer to \cite{zaanen},\cite{Meyer-Nieberg} ($\sigma$-finite measure spaces) and
\cite{fremlin_top-riesz&mt},\cite{fremlin} (general measure spaces and, in particular, Maharam measure spaces).

A \emph{Banach function space} on $(A,\mathscr{A},\mu)$ is an ideal $E$ of $L^{0}(A)$ which is equipped with a Banach lattice norm.
Note that each Banach function space is $\sigma$-Dedekind complete, being an ideal in the $\sigma$-Dedekind complete $L^{0}(A)$.
Examples of Banach function spaces are the $L^{p}$-spaces ($p\in [1,\infty]$), Orlicz spaces, Lorentz spaces, and Marcienkiewicz spaces.

A \emph{K\"othe dual pair (of Banach function spaces)} on $(A,\mathscr{A},\mu)$ is a Banach dual pair $\ip{E}{F}$ consisting of two Banach functions spaces $E$ and $F$ on $(A,\mathscr{A},\mu)$ with $E \cdot F \subset L^{1}(A)$ for which the pairing $\ip{\,\cdot\,}{\,\cdot\,}$
is given by
\[
\ip{e}{f} = \int_{A} fg\,d\mu, \quad\quad\quad e \in E, f \in F.
\]
Observe that the induced linear maps $e \mapsto \ip{e}{\,\cdot\,},E \longra F^{*}$ and $f \mapsto \ip{\,\cdot\,}{f}, F \longra E^{*}$ are lattice isomorphisms onto their images.
Examples of K\"othe dual pairs are $\ip{L^{1}(A)}{L^{\infty}(A)}$ for $(A,\mathscr{A},\mu)$ semi-finite or $\ip{L^{p}(A)}{L^{p'}(A)}$ for $p,p' \in ]1,\infty[, \frac{1}{p}+\frac{1}{p'}=1$; these two examples are even norming.

The  \emph{K\"othe dual} of a Banach function space $E$ on $(A,\mathscr{A},\mu)$ is the ideal $E^{\times}$ of $L^{0}(A)$ defined by
\[
E^{\times} := \{ f \in L^{0}(A) \,:\, fe \in L^{1}(A) \: \forall e \in E \},
\]
and is equipped with the seminorm
\[
\norm{f}_{E^{\times}} := \sup\left\{ \: \left| \int_{A}fe\,d\mu \right| \,:\, e \in E, \norm{e} \leq 1 \: \right\}.
\]

Suppose that $(A,\mathscr{A},\mu)$ is Maharam. Then every Banach function space $E$ on $(A,\mathscr{A},\mu)$ is Dedekind complete, being an ideal in the Dedekind complete $L^{0}(A)$, and has a well-defined
\emph{support} or \emph{carrier} $\supp(E)$ in $A$, which is the smallest set $\supp(E)$ (with respect to $\mu$-a.e. inclusion) such that
every $e \in E$ vanishes $\mu$-a.e. on $A \setminus \supp(E)$. It holds that $\supp(E)=A$ if and only if $E$ is order dense in $L^{0}(A)$ if and only if for every $B \in \mathscr{A}$ there exists a $C \in \mathscr{A}$ such that $C \subset A$, $\mu(C)>0$, and $1_{C} \in E$. In situation we have the following important duality result:

\begin{fact}\label{thm:Kothe_dual}
Suppose that $E$ is a Banach function space on the Maharam measure space $(A,\mathscr{A},\mu)$ having full carrier (i.e. $\supp(E) = A$). Then $E^{\times}$ is a Banach function space on $(A,\mathscr{A},\mu)$ with $\supp(E^{\times}) = A$ and $\ip{E}{E^{\times}}$ is a K\"othe dual pair on $(A,\mathscr{A},\mu)$ for which the image of $f \mapsto \ip{\,\cdot\,}{f}, E^{\times} \longra E^{*}$ is the band of order continuous functionals in $E^{*}$. In particular, $f \mapsto \ip{\,\cdot\,}{f}, E^{\times} \longra E^{*}$ is an isometric lattice isomorphism if and only if $E$ has an order continuous norm.
\end{fact}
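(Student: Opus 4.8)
The plan is to study the canonical map $J\colon E^{\times}\longra E^{*}$ given by $Jf=\ip{\,\cdot\,}{f}$ and to prove that it is an isometric lattice isomorphism of $E^{\times}$ onto the band $(E^{*})_{n}$ of order continuous functionals on $E$ (recall that $E^{*}=E^{\sim}$ for the Banach lattice $E$); every assertion of the Fact is a consequence of this single statement. The structural points are quick. That $E^{\times}$ is an ideal of $L^{0}(A)$ is immediate: if $|g|\leq|f|$ with $f\in E^{\times}$, then $|ge|\leq|fe|\in L^{1}(A)$ for every $e\in E$. Writing out the definition one has $\norm{f}_{E^{\times}}=\sup\{\,\int_{A}|f|\,e\,d\mu : e\in E^{+},\ \norm{e}\leq1\,\}=\norm{Jf}_{E^{*}}$, so $J$ is isometric and $\norm{\,\cdot\,}_{E^{\times}}$ is a lattice seminorm; it is a genuine norm precisely because $\supp(E)=A$: if $\norm{f}_{E^{\times}}=0$ then $\int_{A}|f|\,e\,d\mu=0$ for all $e\in E^{+}$, and choosing (by semi-finiteness and full carrier) an $e=1_{C}$ with $C\subseteq\{|f|>0\}$ and $0<\mu(C)<\infty$ forces $f=0$. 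That $J$ is a lattice homomorphism follows from $|Jf|=J|f|$: for $e\in E^{+}$ the Riesz–Kantorovich formula gives $|Jf|(e)=\sup_{|h|\leq e}|\int_{A}fh\,d\mu|\leq\int_{A}|f|\,e\,d\mu=J|f|(e)$, while the choice $h=\overline{\operatorname{sgn}(f)}\,e\in E$ (legitimate since $|h|=e$ and $E$ is an ideal) attains the bound. In particular $J$ is injective, hence a lattice isomorphism onto its image.

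Next I would show $J(E^{\times})\subseteq(E^{*})_{n}$. Because $|Jf|=J|f|$ it suffices to check that $J|f|$ is order continuous. If $e_{\alpha}\downarrow0$ in $E$, then—using that $(A,\mathscr{A},\mu)$ is Maharam, so $L^{0}(A)$ is Dedekind complete and infima in the ideal $E$ agree with those in $L^{0}(A)$—multiplication by the a.e.\ finite function $|f|$ is order continuous on $L^{0}(A)$, whence $|f|e_{\alpha}\downarrow0$ in $L^{0}(A)$, and therefore in the ideal $L^{1}(A)$. Since $L^{1}(A)$ has order continuous norm, $J|f|(e_{\alpha})=\norm{|f|e_{\alpha}}_{L^{1}}\to0$, as required.

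The main obstacle is the reverse inclusion $(E^{*})_{n}\subseteq J(E^{\times})$, i.e.\ the statement that every order continuous functional is integration against an $E^{\times}$-density; this is a Radon–Nikodym theorem and is exactly where localizability is indispensable. Given $0\leq\phi\in(E^{*})_{n}$ (the general case following by passing to positive parts), I would set $\lambda(B):=\phi(1_{B})$ for those $B$ with $1_{B}\in E$; order continuity of $\phi$ turns $B_{n}\downarrow\varnothing$ into $1_{B_{n}}\downarrow0$ in $E$ and hence yields countable additivity, while $\lambda\ll\mu$ is clear. On $\sigma$-finite pieces the classical Radon–Nikodym theorem furnishes densities, and the Maharam property is precisely what allows these local densities to be glued into a single $f\in L^{0}(A)^{+}$ with $\phi(1_{B})=\int_{A}1_{B}f\,d\mu$. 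Approximating an arbitrary $e\in E^{+}$ from below by simple functions $s_{n}\uparrow e$ lying in $E$ and invoking order continuity of $\phi$ together with monotone convergence upgrades this to $\phi(e)=\int_{A}ef\,d\mu=Jf(e)$, with $f\in E^{\times}$ and $\norm{f}_{E^{\times}}=\norm{\phi}$. (On a merely semi-finite space the local densities need not glue—the same obstruction that makes $L^{\infty}(A)\cong(L^{1}(A))^{*}$ fail there.)

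Finally, with $J(E^{\times})=(E^{*})_{n}$ in hand the remaining claims are bookkeeping. The order continuous functionals form a band in $E^{*}$, and bands in a Banach lattice are norm closed; hence $(E^{*})_{n}$ is complete and so is its isometric copy $E^{\times}$, which is therefore a Banach function space. Its full carrier $\supp(E^{\times})=A$ is the saturation of the associate space, a standard consequence of semi-finiteness together with $\supp(E)=A$; equivalently, $E^{\times}$ separates the points of $E$, so that $\ip{E}{E^{\times}}$ is a Köthe dual pair. The last assertion then follows from the standard characterization that a Banach lattice has order continuous norm if and only if every continuous functional is order continuous, i.e.\ $E^{*}=(E^{*})_{n}$: under this condition $J$ is an isometric lattice isomorphism of $E^{\times}$ onto all of $E^{*}$, and conversely.
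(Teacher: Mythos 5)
The paper does not prove this statement: it is quoted as a known Fact with references to Zaanen and Fremlin, so there is no internal proof to compare against. Your outline is the standard literature proof. The main line --- showing that $J\colon f \mapsto \ip{\,\cdot\,}{f}$ is an isometric lattice isomorphism of $E^{\times}$ onto the band $E^{*}_{n}$ of order continuous functionals, with the isometry and injectivity coming from full carrier of $E$, the inclusion $J(E^{\times}) \subseteq E^{*}_{n}$ from order continuity of the $L^{1}$-norm, and the reverse inclusion from a Radon--Nikod\'ym argument whose local densities are glued using Maharamness --- is exactly right, and you correctly identify the gluing step as the place where localizability is indispensable. Completeness of $E^{\times}$ via norm-closedness of bands, and the final equivalence via the characterization of order continuous norms by $E^{*} = E^{*}_{n}$, are also correct.

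The one genuine gap is the claim $\supp(E^{\times}) = A$, equivalently that $E^{\times}$ separates the points of $E$. This is not bookkeeping: without it $\ip{E}{E^{\times}}$ is not a Banach dual pair at all, since the map $E \longra (E^{\times})^{*}$ need not be injective. It also does not follow from the identification $J(E^{\times}) = E^{*}_{n}$, because nothing you have established rules out that the band of order continuous functionals vanishes on some piece of $A$. The \emph{saturation of the associate norm} is a theorem in its own right (Luxemburg--Zaanen; Fremlin): given $B$ with $0 < \mu(B) < \infty$ and $1_{B} \in E$, one must actually exhibit $C \subseteq B$ of positive measure with $\sup\{\,\int_{C}|e|\,d\mu \,:\, e \in E,\ \norm{e}_{E} \leq 1\,\} < \infty$, and the known proofs proceed by an exhaustion argument that is not shorter than your Radon--Nikod\'ym step. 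You should either supply that argument or cite it explicitly as a separate ingredient rather than folding it into ``standard consequence.'' A secondary, more forgivable omission is a sentence explaining how the maximal disjoint family $\{A_{i}\}$ with $1_{A_{i}} \in E$ and $\mu(A_{i}) < \infty$ covering $A$ (in the measure algebra) is produced before the local densities are glued; that is Zorn plus full carrier plus semi-finiteness and is reasonable to leave as a sketch.
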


Note that $E=L^{\infty}(A)$ does in general not have an order continuous norm, in which case the norm dual $(L^{\infty}(A))^{*}$ has functionals which are not order continuous, or equivalently, functionals which do not belong to the K\"othe dual $(L^{\infty}(A))^{\times} = L^{1}(A)$. In the special case of the counting measure space $(A,\mathscr{A},\mu) = (\N,\mathcal{P}(\N),\#)$, so that $E=\ell^{\infty}$, examples of linear functionals belonging to $(\ell^{\infty})^{*} \setminus \ell^{1}$ are the so-called Banach limits, whose existence can be established using Hahn-Banach (see \cite[Section III.7]{conway}).
\begin{definitie}\label{def:Banach_limit}
A bounded linear functional $\Lambda \in (\ell^{\infty})^{*}$ is called a \emph{Banach limit} if it has the following properties:
\begin{itemize}
\item[(a)] If $\{x_{n}\}_{n \in \N} \in \ell^{\infty}$ is a convergent sequence with limit $x$ (as $n \to \infty$), then $\Lambda(\{x_{n}\}_{n \in \N}) = x$.
\item[(b)] $\Lambda$ is positive.
\item[(c)] $\Lambda(\{x_{n}\}_{n \in \N}) = \Lambda(\{x_{n+1}\}_{n \in \N})$ for all $\{x_{n}\}_{n \in \N} \in \ell^{\infty}$.
\end{itemize}
\end{definitie}
We will use Banach limits as a counterexample to the extension problem in Subsection \ref{subsection:ext_reflexive}; see Example \ref{ex:counterexample}.

\subsection{K\"othe-Bochner Spaces}

Given a Banach function space $E$ on a measure space $(A,\mathscr{A},\mu)$, we define the vector space
\[
E(X) := \{ f \in L^{0}(A;X) \,:\, \norm{f}_{X} \in E \}.
\]
Endowed with the norm $\norm{f}:=\norm{\norm{f}_{X}}_{E}$, $E(X)$ becomes a Banach space which is called the \emph{K\"othe-Bochner space}
associated with $E$ and $X$.
We denote by $E \tilde{\otimes} X$ the closure of $E \otimes X$ in $E(X)$; recall that we use the usual convention to view $e \otimes x$ as the function $a \mapsto e(a)x$. We have $E(X) = E \tilde{\otimes} X$ provided that $E$ has an order continuous norm; in fact,
it is not diffcult to show that the linear subspace
\begin{align*}
\mathrm{St}_{E}(A;X)
&:= \left\{ \sum_{j=1}^{n}1_{A_{j}} \otimes x_{j} \,:\, A_{j} \in \mathscr{A}\:\mathrm{disjoint}\:, 1_{A_{j}} \in E,x_{j} \in X
\right\}  \\
&= \mathrm{St}(A;X) \cap E(X)
\end{align*}
of step functions which are in $E(X)$ is already dense in $E(X)$ provided that $E$ has an order continuous norm (see \cite{Hensgen}). We would like to mention that there are several cross-norms on $E \otimes X$ which coincide with the restricted norm coming from $E(X)$ (see \cite{Hensgen} and the references therein).

Observe that for $E=L^{p}(A)$ ($p \in [1,\infty]$) we get the usual Lebesgue-Bochner space $E(X) = L^{p}(A;X)$.

If $\ip{X}{Y}$ is a Banach dual pair and $\ip{E}{F}$ a K\"othe dual pair on $(A,\mathscr{A},\mu)$, then $\ip{E(X)}{F(Y)}$,
$\ip{E \tilde{\otimes} X}{F(Y)}$, $\ip{E(X)}{F \tilde{\otimes} Y}$ and $\ip{E\tilde{\otimes}X}{F\tilde{\otimes}Y}$ are Banach dual pairs under the pairing
\[
\ip{e}{f} = \ip{e}{f}_{\ip{E(X)}{F(X)}} := \int_{A}\ip{e(a)}{f(a)}_{\ip{X}{Y}}\,d\mu(a);
\]
in fact $E \otimes X$ and $F \otimes Y$ are already separating for $F(Y)$ and $E(X)$, respectively.

Suppose that $\ip{X}{Y}$ is norming.
If $\ip{E}{F} = \ip{L^{1}(A)}{L^{\infty}(A)}$ with $(A,\mathscr{A},\mu)$ semi-finite or $\ip{E}{F}=\ip{L^{p}(A)}{L^{p'}(A)}$
with $1<p,p'<\infty, \frac{1}{p}+\frac{1}{p'}=1$, then the Banach dual pair $\ip{E(X)}{F(Y)}$ is norming; note that for the latter it suffices to consider the $\sigma$-finite case.
In the case of a semi-finite measure space $(A,\mathscr{A},\mu)$ it can in fact be shown (with a slight modification of the proof of \cite[Theorem~1.1]{buhvalov2}) that, if $\ip{E}{F}$ is a norming K\"othe dual pair on $A$, then the dual pair $\ip{F(X)}{E(Y)}$ is norming as well.

\subsection{Banach Space-Valued Extensions of Linear Operators Between Banach Function Spaces}

Given two Banach function spaces $E$ and $G$, a bounded linear operator $S$ from $E$ to $G$ and a Banach space $X$, we can define the tensor extension $S \otimes I_{X}$ from $E \otimes X$ to $G \otimes X$ as the linear operator determined by the formula
\[
(S \otimes I_{X})(f \otimes x) := Sf \otimes x, \quad\quad\quad f \in E, x \in X.
\]
It is a natural question whether $S \otimes I_{X}$ extends to a bounded linear operator from $E \tilde{\otimes} X$ to $G \tilde{\otimes} X$; recall that $F \tilde{\otimes} X$ denotes the closure of $F \otimes X$ in $F(X)$ when $F$ is a Banach function space.
If $S \lesssim R$ for a positive operator $R \in \mathcal{L}(E,G)$ ($R \geq 0$ dominates $S$), then it can be shown that
\begin{equation}\label{eq:X-norm_estimate_tensor_extension_regular_operator}
\norm{(S \otimes I_{X})e}_{X} \leq R\norm{e}_{X}
\end{equation}
for all $f \in E \otimes X$ (cf. Lemma 2.3 of \cite{haase}), from which it is immediate that:

\begin{fact}\label{thm:ext_reg}
Let $S$ be a bounded linear operator between two Banach function spaces $E$ and $G$
and let $X$ be a Banach space.
If $S \in M(E,G)$ (i.e. $S$ is dominated by a positive operator), then $S \otimes I_{X}$ has a unique extension to a bounded linear operator $S_{X}$ from
$E \tilde{\otimes} X$ to $G \tilde{\otimes} X$ of norm $\norm{S_{X}} \leq \norm{S}_{M(E,G)}$.
\end{fact}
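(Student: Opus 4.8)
The plan is to deduce the statement directly from the pointwise estimate \eqref{eq:X-norm_estimate_tensor_extension_regular_operator} by passing to $G(X)$-norms and then invoking the standard principle of bounded extension by density; accordingly the whole argument is a short computation followed by a density argument.

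First I would fix an arbitrary dominant $R \in \mathrm{mat}(S)$, so that $R \geq 0$ and $|Se| \leq R|e|$ for all $e \in E$. Let $e \in E \otimes X$. Then $\norm{e}_{X} \colon a \mapsto \norm{e(a)}_{X}$ lies in $E^{+}$, while $(S \otimes I_{X})e \in G \otimes X \subset G(X)$, so that both sides of \eqref{eq:X-norm_estimate_tensor_extension_regular_operator} are positive elements of $G$. Since $G$, being a Banach function space, carries a Banach lattice norm, the monotonicity $0 \leq g_{1} \leq g_{2} \Rightarrow \norm{g_{1}}_{G} \leq \norm{g_{2}}_{G}$ applied to \eqref{eq:X-norm_estimate_tensor_extension_regular_operator} yields, using the definition $\norm{f}_{G(X)} = \norm{\,\norm{f}_{X}\,}_{G}$ of the K\"othe--Bochner norm,
\[
\norm{(S \otimes I_{X})e}_{G(X)} = \norm{\,\norm{(S \otimes I_{X})e}_{X}\,}_{G} \leq \norm{R\norm{e}_{X}}_{G} \leq \norm{R}\,\norm{\,\norm{e}_{X}\,}_{E} = \norm{R}\,\norm{e}_{E(X)}.
\]
As this holds for every dominant $R$, taking the infimum over $R \in \mathrm{mat}(S)$ and recalling $\norm{S}_{M(E,G)} = \inf\{\norm{R} : R \in \mathrm{mat}(S)\}$ gives
\[
\norm{(S \otimes I_{X})e}_{G(X)} \leq \norm{S}_{M(E,G)}\,\norm{e}_{E(X)}, \qquad e \in E \otimes X.
\]
Hence $S \otimes I_{X}$ is bounded as an operator from $(E \otimes X, \norm{\,\cdot\,}_{E(X)})$ into $G(X)$, with norm at most $\norm{S}_{M(E,G)}$.

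It remains to extend by density. By definition $E \tilde{\otimes} X$ is the closure of $E \otimes X$ in $E(X)$, so the bounded operator above admits a unique continuous extension $S_{X} \colon E \tilde{\otimes} X \longra G(X)$ satisfying $\norm{S_{X}} \leq \norm{S}_{M(E,G)}$; uniqueness is immediate, since any two bounded extensions coincide on the dense subspace $E \otimes X$. Finally, $S_{X}$ maps $E \otimes X$ into $G \otimes X \subset G \tilde{\otimes} X$, and $G \tilde{\otimes} X$ is by definition closed in $G(X)$, so by continuity $S_{X}$ maps all of $E \tilde{\otimes} X$ into $G \tilde{\otimes} X$, as required. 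Since the substantive inequality \eqref{eq:X-norm_estimate_tensor_extension_regular_operator} is already available, no genuine obstacle remains; the only point demanding a little care is the passage from the pointwise lattice inequality to the $G(X)$-norm estimate, which is precisely where the monotonicity of the lattice norm of $G$ and the definition of the K\"othe--Bochner norm are used.
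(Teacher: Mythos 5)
Your proof is correct and follows exactly the route the paper intends: the paper states Fact \ref{thm:ext_reg} as an immediate consequence of the pointwise estimate \eqref{eq:X-norm_estimate_tensor_extension_regular_operator}, and your argument simply makes that immediacy explicit via monotonicity of the lattice norm of $G$, the infimum over dominants, and extension by density. No issues.
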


Note that if $E$ has an order continuous norm, so that $E \tilde{\otimes} X = E(X)$ (i.e. $E \otimes X$ is dense in $E(X)$), then the fact says that, for every $S \in M(E,G)$, the tensor extension $S \otimes I_{X}$ extends to a bounded linear operator $S_{X} \in \mathcal{L}(E(X),G(X))$, or equivalently, there exists a (necessarily unique) bounded linear operator $S_{X} \in \mathcal{L}(E(X),G(X))$
with the property that
\[
\ip{S_{X}e}{x^{*}} = S\ip{e}{x}, \quad\quad\quad e \in E(X), x^{*} \in X^{*}.
\]
The aim of this paper is to obtain analogues of this extension result (in the latter formulation) for $E$ not (necessarily) having an order continuous norm, with as main interest $E=L^{\infty}(A)$. Our two main results in this direction are
Theorem \ref{thm:main_result2_extension_reflexive} and Theorem \ref{thm:main_result}.

In case $G$ has a Levi norm (so that $G$ must be Dedekind complete and thus $M(E,G) = \mathcal{L}_{r}(E,G)$) the converse of the above fact holds as well and is an easy consequence of the fact taken from \cite{buhvalov1} that, in this case, $S$ is regular if and only if there exists a constant $C \geq 0$ such that, for all $e_{1},\ldots,e_{N} \in E$,
\[
\norm{\sum_{n=1}^{N}|Se_{n}|}_{G} \leq C\norm{\sum_{n=1}^{N}|e_{n}|}_{E}.
\]
\begin{fact}\label{thm:ext_reg_Levi}
Let $S$ be a bounded linear operator between two Banach function spaces $F$ and $G$ of which $G$ has a Levi norm.
Then the following assertions are equivalent.
\begin{itemize}
\item[(a)] $S$ is regular;
\item[(b)] $S \otimes I_{\ell^{1}}$ has an extension to (a necessarily unique) bounded linear operator $S_{\ell^{1}} \in \mathcal{L}(E \tilde{\otimes} \ell^{1}, G \tilde{\otimes} \ell^{1})$;
\item[(c)] $S \otimes I_{X}$ has an extension to (a necessarily unique) bounded linear operator $S_{X} \in \mathcal{L}(E \tilde{\otimes} X, G \tilde{\otimes} X)$ for every Banach space $X$.
\end{itemize}
In this situation we have $\norm{S_{X}} \leq \norm{S}_{r} \leq \norm{S_{\ell^{1}}}$.
\end{fact}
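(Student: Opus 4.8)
The plan is to prove the cyclic chain of implications (a) $\Rightarrow$ (c) $\Rightarrow$ (b) $\Rightarrow$ (a), keeping track of the operator norms throughout so that the estimate $\norm{S_{X}} \leq \norm{S}_{r} \leq \norm{S_{\ell^{1}}}$ falls out at the end. Two of the three implications are essentially free. The implication (c) $\Rightarrow$ (b) is the special case $X = \ell^{1}$. For (a) $\Rightarrow$ (c) I would use that a Levi norm is in particular Dedekind complete, so that $M(E,G) = \mathcal{L}_{r}(E,G)$ with $\norm{\,\cdot\,}_{M(E,G)} = \norm{\,\cdot\,}_{r}$; hence a regular $S$ is dominated by a positive operator, and Fact~\ref{thm:ext_reg} produces $S_{X}$ for every Banach space $X$ together with the bound $\norm{S_{X}} \leq \norm{S}_{M(E,G)} = \norm{S}_{r}$.

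The substance lies in (b) $\Rightarrow$ (a), for which I would invoke the criterion from \cite{buhvalov1} recalled just above the statement: since $G$ has a Levi norm, it suffices to exhibit a constant $C \geq 0$ with
\[
\norm{\sum_{n=1}^{N}|Se_{n}|}_{G} \leq C\,\norm{\sum_{n=1}^{N}|e_{n}|}_{E}
\]
for every finite family $e_{1},\ldots,e_{N} \in E$. Given such a family, I would test $S_{\ell^{1}}$ against the element $e := \sum_{n=1}^{N} e_{n} \otimes \delta_{n}$ of the algebraic tensor product $E \otimes \ell^{1}$, where $(\delta_{n})_{n \in \N}$ is the standard unit basis of $\ell^{1}$. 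Viewed as a function, $e$ is $a \mapsto (e_{1}(a),\ldots,e_{N}(a),0,0,\ldots)$, whose pointwise $\ell^{1}$-norm is exactly $\sum_{n=1}^{N}|e_{n}(a)|$, so that $\norm{e}_{E(\ell^{1})} = \norm{\sum_{n=1}^{N}|e_{n}|}_{E}$. Applying the tensor extension gives $(S \otimes I_{\ell^{1}})e = \sum_{n=1}^{N} Se_{n} \otimes \delta_{n}$, with pointwise $\ell^{1}$-norm $\sum_{n=1}^{N}|Se_{n}(a)|$, whence $\norm{(S \otimes I_{\ell^{1}})e}_{G(\ell^{1})} = \norm{\sum_{n=1}^{N}|Se_{n}|}_{G}$.

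Because $e$ already lies in $E \otimes \ell^{1}$, the extension acts on it by $S_{\ell^{1}}e = (S \otimes I_{\ell^{1}})e$, and boundedness of $S_{\ell^{1}}$ yields the displayed inequality with $C = \norm{S_{\ell^{1}}}$; by the criterion this shows $S$ is regular, and, as $\norm{S}_{r}$ equals the least constant for which that estimate holds, also $\norm{S}_{r} \leq \norm{S_{\ell^{1}}}$. Combining this with the case $X = \ell^{1}$ of (a) $\Rightarrow$ (c), which gives $\norm{S_{\ell^{1}}} \leq \norm{S}_{r}$, we in fact obtain $\norm{S_{\ell^{1}}} = \norm{S}_{r}$, and the full chain $\norm{S_{X}} \leq \norm{S}_{r} \leq \norm{S_{\ell^{1}}}$ follows. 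The one genuine obstacle is precisely (b) $\Rightarrow$ (a): all of its weight is carried by the Buhvalov-type characterization of the regular operators into a space with a Levi norm, and it is exactly this characterization that converts the elementary pointwise identity for $\ell^{1}$-norms into global regularity of $S$. Everything else is bookkeeping.
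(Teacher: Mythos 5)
Your proposal is correct and follows exactly the route the paper intends: the paper states this as a Fact and, in the paragraph immediately preceding it, reduces everything to Fact~\ref{thm:ext_reg} for (a)$\Rightarrow$(c) and to Buhvalov's characterization of regularity into a Levi-normed space via the estimate $\norm{\sum_{n}|Se_{n}|}_{G} \leq C\norm{\sum_{n}|e_{n}|}_{E}$ for (b)$\Rightarrow$(a), which is precisely the test against $\sum_{n} e_{n}\otimes\delta_{n}$ that you carry out. Your norm bookkeeping, including the observation that $\norm{S_{\ell^{1}}}=\norm{S}_{r}$, is consistent with the stated inequalities.
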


In case that $X = H$ is a Hilbert space, $E=L^{p_{1}}(A)$ and $G=L^{p_{2}}(B)$ with $1 \leq p_{1},p_{2} < \infty$, we do not need to
impose any restrictions on the operator $S$ for $S \otimes I_{H}$ to have a bounded extension.
This result was proved in the 1930's by Marcinkiewicz and Zygmund using Gaussian techniques \cite{Marcinkiewicz}: in fact, there exists a constant $0 < K \leq \max\{\frac{\norm{\gamma}_{p_{1}}}{\norm{\gamma}_{p_{2}}},1\}$, where $\gamma$ denotes a standard Gaussian random variable, such that, for all operators $S \in \mathcal{L}(L^{p_{1}}(A),L^{p_{2}}(B))$, $S \otimes I_{H}$ has a bounded extension $S_{H} \in \mathcal{L}(L^{p_{1}}(A;H),L^{p_{2}}(B;H))$ of norm $\norm{S_{H}} \leq K\norm{S}$ for any Hilbert space $H$, or equivalently,
we have the following square function estimate
\[
\norm{ \left(\sum_{k=1}^{n}|Se_{n}|^{2}\right)^{1/2} }_{L^{p_{2}}(B)} \leq K\norm{S}\norm{ \left(\sum_{k=1}^{n}|e_{n}|^{2}\right)^{1/2} }_{L^{p_{1}}(A)},
\]
valid for all $e_{1},\ldots,e_{n} \in L^{p_{1}}(A)$ (see also \cite{Rubio}). Using the Grothendieck inequality,
Krivine \cite{krivine} showed that this inequality is in fact valid for general Banach lattices with as best possible constant $K$ (working for all pairs of Banach lattices) the Grothendieck constant $K_{G}$ (also see \cite[pg.~82]{class_bs_II}). As a consequence:
\begin{fact}\label{thm:ext_Hilbert}
Let $S$ be a bounded linear operator between two Banach function spaces $E$ and $G$
and let $H$ be a Hilbert space. Then $S \otimes I_{H}$ has a bounded extension $S_{H} \in \mathcal{L}(E \tilde{\otimes} H,G \tilde{\otimes} H)$ of norm $\norm{S_{H}} \leq K_{G}\norm{S}$.
\end{fact}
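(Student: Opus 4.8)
The plan is to verify the asserted norm bound on the dense subspace $E \otimes H$ and then extend by continuity; the crucial analytic input, namely Krivine's square function estimate with constant $K_{G}$, is already quoted just above, so the only genuine work is to reinterpret the pointwise $H$-norm of an element of $E \otimes H$ as a square function.

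First I would fix $f \in E \otimes H$ and reduce to an orthonormal representation. Writing $f = \sum_{k=1}^{m} e_{k} \otimes h_{k}$ with $e_{k} \in E$ and $h_{k} \in H$, the vectors $h_{1},\ldots,h_{m}$ span a finite-dimensional subspace of $H$; choosing an orthonormal basis $u_{1},\ldots,u_{n}$ of this subspace and writing $h_{k} = \sum_{j} \ip{h_{k}}{u_{j}} u_{j}$, we may regroup and obtain $f = \sum_{j=1}^{n} g_{j} \otimes u_{j}$ with $g_{j} = \sum_{k} \ip{h_{k}}{u_{j}} e_{k} \in E$ and $\{u_{j}\}_{j=1}^{n}$ orthonormal. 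The point of this reduction is that, by orthonormality, for $\mu$-a.e.\ $a$ we have
\[
\norm{f(a)}_{H} = \norm{\sum_{j=1}^{n} g_{j}(a) u_{j}}_{H} = \left( \sum_{j=1}^{n} |g_{j}(a)|^{2} \right)^{1/2},
\]
so that $\norm{f}_{H} = \bigl(\sum_{j=1}^{n}|g_{j}|^{2}\bigr)^{1/2}$ as an element of $E$.

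Next I would apply $S \otimes I_{H}$ and perform the analogous computation. Since $(S \otimes I_{H})f = \sum_{j=1}^{n} (Sg_{j}) \otimes u_{j}$ and the $u_{j}$ remain orthonormal, the same identity gives $\norm{(S\otimes I_{H})f}_{H} = \bigl(\sum_{j=1}^{n}|Sg_{j}|^{2}\bigr)^{1/2}$ as an element of $G$. Taking $G$- and $E$-norms and invoking the square function estimate then yields
\[
\norm{(S \otimes I_{H})f}_{G(H)} = \norm{\left( \sum_{j=1}^{n}|Sg_{j}|^{2}\right)^{1/2}}_{G} \leq K_{G}\norm{S}\,\norm{\left( \sum_{j=1}^{n}|g_{j}|^{2}\right)^{1/2}}_{E} = K_{G}\norm{S}\,\norm{f}_{E(H)},
\]
so $S \otimes I_{H}$ is bounded from $(E \otimes H, \norm{\,\cdot\,}_{E(H)})$ into $G(H)$ with norm at most $K_{G}\norm{S}$. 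Finally, since $E \otimes H$ is by definition dense in $E \tilde{\otimes} H$, and since $G \tilde{\otimes} H$ is a closed, hence complete, subspace of $G(H)$ that contains the range $(S \otimes I_{H})(E \otimes H) \subset G \otimes H$, the operator extends uniquely to a bounded $S_{H} \in \mathcal{L}(E \tilde{\otimes} H, G \tilde{\otimes} H)$ with $\norm{S_{H}} \leq K_{G}\norm{S}$.

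There is no deep obstacle here, precisely because the heavy content (Krivine's theorem via the Grothendieck inequality) is supplied externally. The one step requiring care is the orthonormal reduction and the resulting identification of the K\"othe--Bochner norm $\norm{\,\cdot\,}_{E(H)}$ with the square function norm in $E$: this is exactly where the Hilbert space structure of $H$ is used, and it is the step that has no analogue for a general Banach space $X$.
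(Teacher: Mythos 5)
Your argument is correct and is precisely the route the paper intends: Fact~2.6 is stated there as an immediate consequence of Krivine's square function estimate, the bridge being exactly the orthonormal reduction identifying $\norm{\,\cdot\,}_{E(H)}$ on $E\otimes H$ with the square function norm, followed by density of $E\otimes H$ in $E\tilde{\otimes}H$. You have simply written out the details that the paper leaves implicit behind the phrase ``or equivalently.''
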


Again note (as after Fact \ref{thm:ext_reg}) that if $E$ has an order continuous norm, then the result says that there exists a (necessarily unique) bounded linear operator $S_{H} \in \mathcal{L}(E(H),G(H))$ with the property that
\[
\ipp{S_{H}e}{h}_{H} = S\ipp{e}{h}_{H}, \quad\quad\quad e \in E(X), h \in H.
\]
We will extend this result to general $E$ not having an order continuous norm under a mild assumption on $G$ (Proposition \ref{prop:ext_H-valued}); moreover,
we will show that if $S$ is an adjoint operator, then so is $S_{H}$ (Corollary \ref{cor:ext_H-valued}).

\subsection{When are all Bounded Linear Operators Regular?}

Regarding Banach space-valued extensions of operators between Banach function spaces, in view of Fact \ref{thm:ext_reg} (and Fact \ref{thm:ext_reg_Levi}) it is interesting to know between which Banach function spaces every bounded linear operator is automatically regular.
For the following Banach lattice theoretic result in this direction we refer to \cite{positive_Handbook} and \cite{wickstead_regular_operators}.

\begin{fact}\label{thm:all_bdd_op_reg}
Let $E$ and $F$ be two Banach lattices. In each of the following cases we have that every bounded linear operator from $E$ to $F$ is regular:
\begin{itemize}
\item[(i)] $F$ is Dedekind complete and has a strong order unit.
\item[(ii)] $E$ is lattice isomorphic to an AL-space and $F$ has a Levi norm.
\item[(iii)] $E$ is lattice isomorphic to an atomic AL-space.
\item[(iv)] $E$ is atomic with order continuous norm and $F$ is an AM-space.
\end{itemize}
Moreover, in case (i) and (ii), if $F$ has a Fatou norm, then we have $\norm{T} = \norm{T}_{reg}$ for all $T \in \mathcal{L}(E,F)$.
\end{fact}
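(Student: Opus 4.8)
The plan is to treat the four cases separately, in each case exhibiting a positive operator that dominates the given $T \in \mathcal{L}(E,F)$; recall from the preliminaries that when $F$ is Dedekind complete (which, in particular, is automatic once $F$ has a Levi norm) one has $\mathcal{L}_{r}(E,F) = M(E,F)$, so producing a dominant is the same as proving regularity. The unifying device is the Riesz--Kantorovich formula: for $u \in E^{+}$ one tries to realise the modulus as $|T|u = \sup\{\,|Tx| : |x| \leq u\,\}$, equivalently as the supremum of the upward-directed net $\{\sum_{k}|Tx_{k}| : u = \sum_{k} x_{k},\ x_{k} \in E^{+}\}$, and the work in each case is to guarantee that this supremum exists in $F$ and defines a bounded operator.

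For case (i) I would use that a strong order unit $e$ makes the order-unit norm equivalent to $\norm{\,\cdot\,}_{F}$; hence $|Tx| \leq C\norm{Tx}_{F}\,e \leq C\norm{T}\norm{x}\,e$, so $T$ carries every order interval $[-u,u]$ into the order-bounded set $[-C\norm{T}\norm{u}e,\, C\norm{T}\norm{u}e]$. Dedekind completeness then yields the supremum above, i.e.\ $|T|$ exists. For case (ii), after reducing to the case that $E$ is itself an AL-space, the AL-identity $\norm{\sum_{k}x_{k}} = \sum_{k}\norm{x_{k}}$ for $x_{k}\in E^{+}$ gives, for any decomposition $u = \sum_{k}x_{k}$, the uniform bound $\norm{\sum_{k}|Tx_{k}|}_{F} \leq \sum_{k}\norm{Tx_{k}} \leq \norm{T}\sum_{k}\norm{x_{k}} = \norm{T}\norm{u}$; the net of these partial moduli is increasing and norm bounded, so the Levi property delivers its supremum $|T|u$, producing a dominant.

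Cases (iii) and (iv) I would handle by building $|T|$ explicitly on the atoms. Writing $\{e_{\gamma}\}_{\gamma}$ for the atoms of $E$, set $|T|e_{\gamma} := |Te_{\gamma}|$ and extend. In case (iii) the atomic AL-space is $\ell^{1}(\Gamma)$, where $\norm{T} = \sup_{\gamma}\norm{Te_{\gamma}} = \sup_{\gamma}\norm{\,|Te_{\gamma}|\,}$, so $|T|$ is a bounded positive operator dominating $T$ \emph{without any hypothesis on $F$}, which explains why case (iii) requires none. In case (iv) the delicate point is to bound $\norm{\sum_{\gamma}u_{\gamma}|Te_{\gamma}|}_{F}$ for finitely supported $u = \sum_{\gamma}u_{\gamma}e_{\gamma}$; here I would exploit that an AM-space $F$ embeds as a sublattice of some $C(K)$ with supremum norm, so that, writing $\delta_{k}$ for the norm-one evaluation functional at $k\in K$, $\sum_{\gamma}u_{\gamma}|Te_{\gamma}|(k) = \sum_{\gamma}u_{\gamma}|\ip{e_{\gamma}}{T^{*}\delta_{k}}| = \ip{u}{|T^{*}\delta_{k}|} \leq \norm{u}\,\norm{T^{*}\delta_{k}} \leq \norm{u}\,\norm{T}$. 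Taking the supremum over $k$ bounds the $F$-norm by $\norm{T}\norm{u}$, and the order continuity of the norm of $E$ then lets me pass from finitely supported $u$ to arbitrary $u \in E^{+}$, so that $|T|$ is a bounded positive dominant of $T$.

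For the norm equalities in (i) and (ii): the estimates above already give $\norm{|T|u}_{F} \leq \norm{T}\norm{u}$ along the approximating increasing net, and the Fatou property is precisely what allows this uniform bound to pass to the supremum $|T|u$, yielding $\norm{|T|} \leq \norm{T}$; combined with the trivial $\norm{T} \leq \norm{T}_{r} = \norm{|T|}$ this gives $\norm{T} = \norm{T}_{reg}$. I expect the genuinely delicate steps to be case (iv) --- where the two structural hypotheses on $E$ and $F$ must be married through the $C(K)$-representation and the duality $\ip{E}{E^{*}}$ --- and the transfer of the norm bound through the lattice supremum in the ``moreover'', where the Fatou hypothesis is indispensable and where, for (i), the sharp constant relies on $F$ being an AM-space in its order-unit norm.
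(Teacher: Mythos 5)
The paper does not prove this statement at all --- it is quoted as a known fact with references to the Abramovich--Aliprantis Handbook chapter and to Wickstead's survey --- so there is no argument of the paper's to compare yours with; what you have written is essentially the standard proof from those sources. The four regularity claims are handled correctly: in (i) the order-unit bound $|Tx| \leq C\norm{Tx}e$ shows $T$ is order bounded, and Riesz--Kantorovich in the Dedekind complete $F$ yields $|T|$; in (ii) additivity of the norm on $E^{+}$ gives the uniform bound $\norm{T}\norm{u}$ on the upward-directed net of partial moduli and the Levi property supplies its supremum; (iii) needs nothing from $F$ because $\sum_{\gamma}|u_{\gamma}|\,\norm{\,|Te_{\gamma}|\,} \leq \norm{T}\norm{u}_{1}$ makes the defining series absolutely convergent; and in (iv) the identity $|\phi|(e_{\gamma}) = |\phi(e_{\gamma})|$ for atoms $e_{\gamma}$ and $\phi \in E^{*}$, combined with the Kakutani embedding of $F$ into $C(K)$ and the density of finitely supported elements (atomicity plus order continuity), is exactly the right mechanism. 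The routine points left implicit --- upward directedness of the decomposition net via Riesz decomposition, additivity of the Kantorovich extension on $E^{+}$, closedness of the positive cone when passing order inequalities through norm limits --- are all standard and fillable.

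The genuine gap is in the ``moreover''. The Fatou property transfers a \emph{uniform} norm bound from an increasing net to its supremum, but in case (i) you do not have such a bound with constant $\norm{T}$: your estimate gives only $\norm{\,|T|u\,} \leq C\norm{e}\norm{T}\norm{u}$, and the approximating net $\sum_{k}|Tx_{k}|$ is controlled by $\norm{T}\sum_{k}\norm{x_{k}}$, which collapses to $\norm{T}\norm{u}$ only when the norm of $E$ is additive on the cone --- that is the AL-hypothesis of case (ii), unavailable in (i). Your closing remark shows you sense this, but the difficulty is not repairable under the stated hypotheses: on $F = \R^{2}$ with the Euclidean norm (Dedekind complete, strong order unit, Fatou and Levi) the rotation $T = 2^{-1/2}\left(\begin{smallmatrix}1 & -1\\ 1 & 1\end{smallmatrix}\right)$ has $\norm{T} = 1$ while $\norm{T}_{reg} = \norm{\,|T|\,} = \sqrt{2}$. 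The isometric conclusion in (i) genuinely requires $F$ to be an AM-space with unit (so that $|y| \leq \norm{y}e$ with constant one), and likewise in (ii) your reduction ``to the case that $E$ is itself an AL-space'' destroys the constant, since lattice isomorphy only yields $\norm{T}_{reg} \leq C\norm{T}$. So your proposal establishes the regularity assertions and a constant-carrying version of the norm estimate, but the equality $\norm{T} = \norm{T}_{reg}$ as literally stated needs the stronger isometric hypotheses found in the cited references.
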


Note that for example every bounded linear operator $T:L^{p}(A) \longra L^{\infty}(B)$, $p \in [1,\infty[$ and $B$ Maharam, is regular by (i) and that every bounded linear operator $T:L^{1}(A) \longra L^{q}(B)$, $q \in [1,\infty[$, is regular by (ii), and thus have $Y$-valued extensions $T_{Y}$ of norm $\norm{T_{Y}} \leq \norm{T}$ for every Banach space $Y$ (by Fact \ref{thm:ext_reg}).

\section{Results}\label{sec:results}

\subsection{The Extension Problem}

Let $E$ and $G$ be two Banach function spaces and let $T \in \mathcal{L}(E,G)$. Given a Banach dual pair $\ip{X}{Y}$, we are interested in the question whether there exists a (necessarily unique) bounded linear operator $T_{Y} \in \mathcal{L}(E(Y),G(Y))$ with the property that
\begin{equation}\label{eq:the_extension_problem_relation}
\ip{x}{T_{Y}e} = T\ip{x}{e}, \quad\quad\quad e \in E(Y), x \in X.
\end{equation}
We call the operator $T_{Y}$ the $Y$-valued extension of $T$ with respect to the pairing $\ip{X}{Y}$.

In case $E$ has an order continuous norm, so that $E \otimes Y$ is dense in $E(Y)$ (i.e.\ $E \tilde{\otimes} Y = E(Y)$),
$T_{Y}$ is just the unique extension of $T \otimes I_{Y}$ to a bounded linear operator $T_{Y} \in \mathcal{L}(E(Y),G(Y))$.
So, in this situation, we have existence of $T_{Y}$ provided that $T$ is dominated by a positive operator (Fact \ref{thm:ext_reg}) or $Y$ is a Hilbert space (Fact \ref{thm:ext_Hilbert}). In this paper we will consider the extension problem \eqref{eq:the_extension_problem_relation} for $E$ not (necessarily) having an order continuous norm, with as main interest $E = L^{\infty}(A)$, and obtain analogues of the two just mentioned extension results; see Theorem \ref{thm:main_result2_extension_reflexive} and Theorem \ref{thm:main_result} for extensions of operators dominated by a positive operator
and Proposition \ref{prop:ext_H-valued} and Corollary \ref{cor:ext_H-valued} for Hilbert space-valued extensions.

\begin{rmk}\label{rmk:eq;the_extension_problem_relation_mapping}
Note that if $T_{Y}$ is a mapping $E(Y) \longra G(Y)$ satisfying \eqref{eq:the_extension_problem_relation}, then $T_{Y}$ is automatically a linear operator which is bounded by the closed graph theorem. Moreover, if $T \in M(E,G)$ and $\ip{X}{Y}$ is norming, then we have the norm estimate $\norm{T_{Y}} \leq \norm{T}_{M(E,G)}$.
\end{rmk}
\begin{proof}[Proof of the norm estimate]
Let $e \in E(Y)$ be given.
Pick a positive operator $R \in \mathcal{L}(E,G)$ dominating $T$. Since $\ip{X}{Y}$ is norming, we can pointwise estimate
\[
\norm{T_{Y}e}_{Y} = \sup_{x \in B_{X}}|\ip{x}{T_{Y}e}| \stackrel{\eqref{eq:the_extension_problem_relation}}{=} \sup_{x \in B_{X}}|T\ip{x}{e}| \leq \sup_{x \in B_{X}}R|\ip{x}{e}| \leq R\norm{e}_{Y},
\]
and thus $\norm{T_{Y}e}_{G(Y)} \leq \norm{R}\norm{e}_{E(Y)}$. Therefore, $\norm{T_{Y}} \leq \norm{T}_{M(E,G)}$.
\end{proof}

The following simple lemma gives, in two situations, a suggestion how to obtain the $Y$-valued extension of $T$:

\begin{lemma}\label{lemma:if_ext_exists_then}
Let $E$ and $G$ be two Banach function spaces, $T \in \mathcal{L}(E,G)$, and $\ip{X}{Y}$ a Banach dual pair. Assume that $T$ has a $Y$-valued extension $T_{Y}$ with respect to $\ip{X}{Y}$.
\begin{itemize}
\item[(i)] If $Y$ has a Schauder basis $\{b_{n}\}_{n \in \N}$ with biorthogonal functionals $\{b_{n}^{*}\}_{n \in \N} \subset X$,
then we must have
\begin{equation}\label{eq:formula_via_basis}
T_{Y}e = \sum_{n=0}^{\infty}T\ip{b_{n}^{*}}{e} \otimes b_{n}
\end{equation}
pointwise in $Y$ for every $e \in E(Y)$.
\item[(ii)] Suppose that $\ip{D}{E}$ and $\ip{F}{G}$ are K\"othe dual pairs and that $T$ is $\sigma(E,D)$-to-$\sigma(G,F)$ continuous with adjoint $S \in \mathcal{L}(F,D)$. If $S \otimes I_{X}$ has an extension to a bounded linear operator
$S_{X} \in \mathcal{L}(F \tilde{\otimes} X, D \tilde{\otimes} X)$, then $T_{Y}$ is $\sigma(E(Y),D \tilde{\otimes} X)$-to-$\sigma(G(Y),F \tilde{\otimes} X)$ continuous with adjoint $S_{X}$.
\end{itemize}
\end{lemma}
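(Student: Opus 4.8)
For (i), the plan is to exploit the defining relation \eqref{eq:the_extension_problem_relation} coordinatewise. Testing it with $x = b_{n}^{*}$ gives $\ip{b_{n}^{*}}{T_{Y}e} = T\ip{b_{n}^{*}}{e}$ as an identity in $G$, hence as an a.e.\ identity of scalar functions, for each $n \in \N$. Taking a common exceptional null set over the countably many indices $n$, I would fix a representative and then apply, for a.e.\ $a \in A$, the pointwise Schauder expansion $(T_{Y}e)(a) = \sum_{n} \ip{b_{n}^{*}}{(T_{Y}e)(a)}\,b_{n}$ in $Y$. Substituting the identity above turns the $n$-th coefficient into $(T\ip{b_{n}^{*}}{e})(a)$, which is precisely \eqref{eq:formula_via_basis} read pointwise. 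The only delicate point here is the order of quantifiers: each equality holds merely a.e., so one must pass to a single null set before invoking the pointwise series.

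For (ii), I would use the characterization of adjoints recalled in the preliminaries on Banach dual pairs: since $S_{X}$ is already known to be bounded, it suffices to check that $e \mapsto \ip{\phi}{T_{Y}e}$ and $e \mapsto \ip{S_{X}\phi}{e}$ define the same functional on $E(Y)$ for every $\phi \in F \tilde{\otimes} X$, where the pairings are the canonical ones of $\ip{F(X)}{G(Y)}$ and $\ip{D(X)}{E(Y)}$. First I would verify this on elementary tensors $\phi = f \otimes x$ with $f \in F$, $x \in X$. Unwinding the pointwise $\ip{X}{Y}$-pairing, the left-hand side equals $\int_{A} f\,\ip{x}{T_{Y}e}\,d\mu = \int_{A} f\,(T\ip{x}{e})\,d\mu$ by \eqref{eq:the_extension_problem_relation}. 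Since $E$ is an ideal and $|\ip{x}{e}| \leq \norm{x}\,\norm{e}_{Y} \in E$, we have $\ip{x}{e} \in E$, so the adjoint relation $\int_{A}(Te')f\,d\mu = \int_{A} e'\,(Sf)\,d\mu$ relating $T$ and $S$ applies with $e' = \ip{x}{e}$, giving $\int_{A}\ip{x}{e}\,(Sf)\,d\mu$. On the other side $S_{X}(f \otimes x) = Sf \otimes x$, so the right-hand side is $\int_{A}(Sf)\,\ip{x}{e}\,d\mu$ as well; the two coincide.

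By linearity the identity extends to all of $F \otimes X$, and to reach $F \tilde{\otimes} X$ I would combine density with continuity in $\phi$: for fixed $e$, the map $\phi \mapsto \ip{\phi}{T_{Y}e}$ is continuous because the Banach duality $\ip{F(X)}{G(Y)}$ is a bounded bilinear form, while $\phi \mapsto \ip{S_{X}\phi}{e}$ is continuous because $S_{X}$ is bounded and $\ip{D(X)}{E(Y)}$ is likewise bounded; agreeing on the dense subspace $F \otimes X$ they agree on its closure. This shows that the algebraic adjoint of $T_{Y}$ maps $F \tilde{\otimes} X$ into $D \tilde{\otimes} X$ and restricts there to $S_{X}$, which is exactly the asserted $\sigma(E(Y), D \tilde{\otimes} X)$-to-$\sigma(G(Y), F \tilde{\otimes} X)$ continuity with adjoint $S_{X}$. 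I expect the genuinely fiddly part to be the bookkeeping of the pairing conventions, namely which space sits on which side of $\ip{X}{Y}$ and of each K\"othe pairing; the analytic content is light, resting only on the density of $F \otimes X$ in $F \tilde{\otimes} X$ and the boundedness of $S_{X}$ together with that of the two dual pairings.
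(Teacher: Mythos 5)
Your proof is correct and follows essentially the same route as the paper: part (i) is the direct coordinatewise unwinding of \eqref{eq:the_extension_problem_relation} via the Schauder expansion (the paper simply calls this immediate, and your care with the common null set is the right way to make it precise), and part (ii) is exactly the paper's computation on elementary tensors $f\otimes x$ followed by linearity and density of $F\otimes X$ in $F\tilde{\otimes}X$.
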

\begin{proof}
(i) is immediate from the definition of Schauder basis and \eqref{eq:the_extension_problem_relation}.
For (ii), let $e \in E(Y)$. For $f \in F$ and $x \in X$ we compute
\begin{align*}
\ip{T_{Y}e}{f \otimes x}_{\ip{G(Y)}{F \tilde{\otimes} X}}
&= \ip{\ip{T_{Y}e}{x}_{\ip{Y}{X}}}{f}_{\ip{G}{F}}
\stackrel{\eqref{eq:the_extension_problem_relation}}{=} \ip{T\ip{e}{x}_{\ip{Y}{X}}}{f}_{\ip{G}{F}} \\
&= \ip{\ip{e}{x}_{\ip{Y}{X}}}{Sf}_{\ip{E}{D}} = \ip{e}{Sf \otimes x}_{\ip{E(Y)}{D \tilde{\otimes} X}},
\end{align*}
so that, by linearity,
\[
\ip{T_{Y}e}{\phi}_{\ip{G(Y)}{F \tilde{\otimes} X}} = \ip{e}{(S\otimes I_{X})\phi}_{\ip{E(Y)}{D \tilde{\otimes} X}} = \ip{e}{S_{X}\phi}_{\ip{E(Y)}{D \tilde{\otimes} X}}
\]
for all $\phi \in F \otimes X$. By continuity and density this identity extends to all $\phi \in F \tilde{\otimes} X$, proving the desired result.
\end{proof}

In the setting of (i) in this lemma, if the basis $\{b_{n}\}_{n \in \N}$ is boundedly-complete and if $X$ is the closed linear span of $\{b_{n}^{*}\}_{n \in \N}$ in $X^{*}$, then we can use formula \eqref{eq:formula_via_basis} to define $T_{Y}$:

\begin{lemma}\label{lemma:def_extension_via_boundedly-complete_basis}
Let $E$ and $G$ be two Banach function spaces and let $T \in \mathcal{L}(E,G)$. Suppose that $Y$ is a Banach space having a boundedly-complete Schauder basis $\{b_{n}\}_{n \in \N}$ with biorthogonal functionals  $\{b_{n}^{*}\}_{n \in \N}$. Define $X$ as the closed linear span of $\{b_{n}^{*}\}_{n \in \N}$ in $Y^{*}$. If $T \in M(E,G)$, then it has a $Y$-valued extension $T_{Y} \in \mathcal{L}(E(Y),G(Y))$
with respect to $\ip{X}{Y}$ (in the sense of \eqref{eq:the_extension_problem_relation}) of norm $\norm{T_{Y}} \leq \norm{T}_{M(E,G)}$.
\end{lemma}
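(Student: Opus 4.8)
The plan is to \emph{define} $T_{Y}$ by the only formula it could have, namely \eqref{eq:formula_via_basis} from Lemma \ref{lemma:if_ext_exists_then}(i), and then to verify that this formula produces a well-defined element of $G(Y)$ satisfying the extension relation \eqref{eq:the_extension_problem_relation}. Before doing so I would record that $\ip{X}{Y}$ is a \emph{norming} Banach dual pair: the biorthogonal functionals $\{b_{n}^{*}\}$ separate the points of $Y$ (since $y = \sum_{n}\ip{b_{n}^{*}}{y}b_{n}$), so the pairing is nondegenerate, and, by the standard duality theory for boundedly-complete bases, the canonical map $Y \longra X^{*}$ is an isometric isomorphism; hence $\norm{y} = \sup_{x \in B_{X}}|\ip{x}{y}|$ for every $y \in Y$.

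For $e \in E(Y)$ write $c_{n} := T\ip{b_{n}^{*}}{e} \in G$; note that $\ip{b_{n}^{*}}{e} \in E$ because $|\ip{b_{n}^{*}}{e}| \leq \norm{b_{n}^{*}}\norm{e}_{Y} \in E$ and $E$ is an ideal. The central observation is that the $N$-th partial sum of \eqref{eq:formula_via_basis} equals $(T \otimes I_{Y})(P_{N}e)$, where $P_{N}y := \sum_{n=0}^{N}\ip{b_{n}^{*}}{y}b_{n}$ are the canonical basis projections: indeed $P_{N}e = \sum_{n=0}^{N}\ip{b_{n}^{*}}{e}\otimes b_{n}$ lies in $E \otimes Y$, and the tensor extension $T \otimes I_{Y}$ (which extends boundedly to $E\tilde{\otimes}Y \longra G\tilde{\otimes}Y$ by Fact \ref{thm:ext_reg}, as $T \in M(E,G)$) sends it to $\sum_{n=0}^{N}c_{n}\otimes b_{n}$. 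Choosing a positive $R \in \mathcal{L}(E,G)$ dominating $T$, the pointwise estimate \eqref{eq:X-norm_estimate_tensor_extension_regular_operator} gives
\[
\norm{\textstyle\sum_{n=0}^{N}c_{n}\otimes b_{n}}_{Y} = \norm{(T\otimes I_{Y})(P_{N}e)}_{Y} \leq R\norm{P_{N}e}_{Y} \leq KR\norm{e}_{Y},
\]
where $K := \sup_{N}\norm{P_{N}} < \infty$ is the basis constant. Thus at a.e.\ point the partial sums $\sum_{n=0}^{N}c_{n}(\cdot)b_{n}$ are bounded in $Y$ uniformly in $N$, and since $\{b_{n}\}$ is boundedly-complete the series $\sum_{n}c_{n}(\cdot)b_{n}$ converges pointwise a.e.\ in $Y$; this is the step where the hypothesis is essential. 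I define $T_{Y}e$ to be this pointwise limit.

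It remains to check that $T_{Y}e \in G(Y)$ and that the relation \eqref{eq:the_extension_problem_relation} holds. Strong measurability of $T_{Y}e$ follows since it is the a.e.\ pointwise limit of the strongly measurable partial sums $(T\otimes I_{Y})(P_{N}e) \in G\tilde{\otimes}Y$; letting $N \to \infty$ in the displayed estimate gives $\norm{T_{Y}e}_{Y} \leq KR\norm{e}_{Y} \in G$, so $\norm{T_{Y}e}_{Y} \in G$ by the ideal property and hence $T_{Y}e \in G(Y)$. For \eqref{eq:the_extension_problem_relation} I would first verify it on $x = b_{m}^{*}$, where $\ip{b_{m}^{*}}{T_{Y}e} = c_{m} = T\ip{b_{m}^{*}}{e}$ by biorthogonality, extend by linearity to $\mathrm{span}\{b_{n}^{*}\}$, and then pass to all $x \in X$ by density, using that both $x \mapsto \ip{x}{T_{Y}e}$ and $x \mapsto T\ip{x}{e}$ are norm-continuous from $X$ into $G$ (the bounds $\norm{T_{Y}e}_{Y}\in G$ and $\norm{T}\norm{e}_{Y} \in G$ control the respective increments). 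Finally, the sharp norm bound $\norm{T_{Y}} \leq \norm{T}_{M(E,G)}$ is exactly the content of Remark \ref{rmk:eq;the_extension_problem_relation_mapping}, now applicable because $\ip{X}{Y}$ is norming.

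The main obstacle is the convergence step: one must convert the uniform-in-$N$ bound on the \emph{norms} of the partial sums into genuine convergence of the vector-valued series, and this is precisely what bounded-completeness supplies (and explains why it, rather than mere boundedness of the basis projections, is assumed). A secondary delicate point is the norming of $\ip{X}{Y}$, which underlies the \emph{sharp} constant $\norm{T}_{M(E,G)}$ via Remark \ref{rmk:eq;the_extension_problem_relation_mapping}: the crude pointwise bound above only yields $\norm{T_{Y}} \leq K\norm{T}_{M(E,G)}$, so it is the isometric identification $Y \cong X^{*}$ coming from bounded-completeness that removes the basis constant $K$.
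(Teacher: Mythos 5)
Your proof is correct and follows essentially the same route as the paper's: a uniform bound on the partial sums $(T\otimes I_{Y})(P_{N}e)=\sum_{n=0}^{N}T\ip{b_{n}^{*}}{e}\otimes b_{n}$ via a positive dominant $R$, pointwise convergence from bounded completeness, verification of \eqref{eq:the_extension_problem_relation} on the biorthogonal functionals, and an appeal to Remark \ref{rmk:eq;the_extension_problem_relation_mapping} for the norm bound. The one overstatement is your claim that $Y\longra X^{*}$ is \emph{isometric} for every boundedly-complete basis --- it is in general only an isomorphism, isometric precisely when the basis is monotone (as the paper notes right after the lemma) --- so the norming of $\ip{X}{Y}$ needed to replace $K\norm{T}_{M(E,G)}$ by $\norm{T}_{M(E,G)}$ is not automatic; this imprecision is, however, equally present in the paper's own appeal to Remark \ref{rmk:eq;the_extension_problem_relation_mapping}.
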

\begin{proof}
Let $R \in \mathcal{L}(E,G)$ be a positive operator dominating $T$. For all $e \in E(Y)$ we can estimate
\begin{align*}
\norm{\sum_{n=0}^{N}T\ip{b_{n}^{*}}{e} \otimes b_{n}}_{Y}
&= \norm{(T \otimes I_{Y})\left( \sum_{n=0}^{N}\ip{b_{n}^{*}}{e} \otimes b_{n} \right) }_{Y} \stackrel{\eqref{eq:X-norm_estimate_tensor_extension_regular_operator}}{\leq} R\norm{\sum_{n=0}^{N}\ip{b_{n}^{*}}{e} \otimes b_{n}}_{Y} \\
&\leq K R\norm{e}_{Y},
\end{align*}
where $K$ is the basis constant of $\{b_{n}\}_{n \in \N}$. Since the basis $\{b_{n}\}_{n \in \N}$ is boundedly complete,
we can define $T_{Y}e \in L^{0}(A;Y)$ as the pointwise limit $\lim_{N \to \infty}\sum_{n=0}^{N}T\ip{b_{n}^{*}}{e} \otimes b_{n}$ in $Y$
to obtain an element $T_{Y}e \in G(Y)$ satisfying $\norm{T_{Y}e}_{Y} \leq KR\norm{e}_{Y} \in G$. It then clearly holds that
$\ip{b_{n}^{*}}{T_{Y}e} = T\ip{b_{n}^{*}}{e}$ for all $e \in E(Y)$ and $n \in \N$, from which it follows that, in fact,
\[
\ip{x}{T_{Y}e} = T\ip{x}{e}, \quad\quad\quad e \in E(Y), x \in X.
\]
Remark \ref{rmk:eq;the_extension_problem_relation_mapping} now completes the proof.
\end{proof}

In the situation of the above lemma, the canonical map $j:Y \longra X^{*}$ given by $j(y)(x) = \ip{y}{x}$, for all $y \in Y$ and $x \in X$, is an isomorphism, which is isometric in case $\{b_{n}\}_{n \in \N}$ is monotone; see \cite[Theorem~3.2.10]{Albiac_Kalton}.
In particular, (possibly) up to an equivalence of norms,
the above lemma is concerned with a special case of the situation $\ip{X}{Y} = \ip{X}{X^{*}}$.
Regarding general $Y$-valued extensions with respect to $\ip{X}{Y}=\ip{X}{X^{*}}$, let us remark the following:

\begin{rmk}\label{rmk:lemma:def_extension_via_boundedly-complete_basis;predual}
Let $E$ and $G$ be two Banach function spaces and let $T \in \mathcal{L}(E,G)$.
Let $X$ be a Banach space and put $Y:=X^{*}$.
In this situation we would like to simply define the $Y$-valued extension $T_{Y}$ of $T$ with respect to $\ip{X}{Y}$ by \eqref{eq:the_extension_problem_relation}.
However, $\{ \ip{x}{Te} : x \in X \} \subset G$ is just a family of equivalence classes of measurable functions and it is not clear how to obtain an element $T_{Y}e \in G(Y,\sigma(Y,X))$.
In case $G$ is a Banach function space over $(B,\mathscr{B},\nu) = (B,\mathcal{P}(B),\#)$ this problem does not occur.
Moreover, if $B$ is countable or $Y$ is separable, then we obtain an element $T_{Y}e \in G(Y)$.
\end{rmk}

In view of Lemma \ref{lemma:if_ext_exists_then}.(ii) it is natural to consider the extension problem in the following lemma.

\begin{lemma}\label{eq:relation_ext_S_T}
Let $\ip{D}{E}$ and $\ip{F}{G}$ be two K\"othe dual pairs and
let $T \in \mathcal{L}(E,G)$ be a $\sigma(E,D)$-to-$\sigma(G,F)$ continuous linear operator with adjoint $S \in \mathcal{L}(F,D)$. For any dual pair of Banach spaces $\ip{X}{Y}$, the following are equivalent:
\begin{itemize}
\item[(a)] $T \otimes I_{Y}$ extends to a (necessarily unique) $\sigma(E(Y),D \tilde{\otimes} X)$-to-$\sigma(G(Y),F \tilde{\otimes} X)$ continuous linear operator $T_{Y} \in \mathcal{L}(E(Y),G(Y))$.
\item[(b)] $S \otimes I_{X}$ extends to a (necessarily unique) $\sigma(F \tilde{\otimes} X,G(Y))$-to-$\sigma(D \tilde{\otimes} X, E(Y))$ continuous linear operator $S_{X} \in \mathcal{L}(F \tilde{\otimes} X,D \tilde{\otimes} X)$.
\end{itemize}
In this situation, $S_{X}$ and $T_{Y}$ are adjoints of each other and $T_{Y}$ is the $Y$-valued extension of $T$
with respect to $\ip{X}{Y}$ (in the sense of \eqref{eq:the_extension_problem_relation}).
\end{lemma}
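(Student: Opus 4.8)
The plan is to recognize that statements (a) and (b) are nothing but the two halves of a single adjoint relationship, read off from the duality principle of Subsection 2.1: an operator between Banach dual pairs is $\sigma$-to-$\sigma$ continuous precisely when its algebraic adjoint maps the one predual into the other, and this adjoint is then itself weakly continuous with the original operator as its adjoint. First I would record the relevant dual pairs supplied by the K\"othe--Bochner formalism of Subsection 2.6: since $\ip{D}{E}$ and $\ip{F}{G}$ are K\"othe dual pairs and $\ip{X}{Y}$ is a Banach dual pair, the quadruples $\ip{E(Y)}{D \tilde{\otimes} X}$ and $\ip{G(Y)}{F \tilde{\otimes} X}$ are Banach dual pairs under the integrated pairing. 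I would also extract the two separation facts that do the real work: $E \otimes Y$ separates $D \tilde{\otimes} X$ (so $E \otimes Y$ is $\sigma(E(Y), D \tilde{\otimes} X)$-dense in $E(Y)$), while $F \otimes X$ is by definition norm-dense in $F \tilde{\otimes} X$ and separates $G(Y)$.

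The heart of the matter is a single computation showing that $T \otimes I_{Y}$ and $S \otimes I_{X}$ are already adjoint to each other on the algebraic tensor products. Testing on elementary tensors $u \otimes y \in E \otimes Y$ and $f \otimes x \in F \otimes X$, unfolding the integrated pairing factors off $\ip{x}{y}$ and leaves the scalar K\"othe pairing of $Tu$ against $f$; the hypothesis that $S$ is the adjoint of $T$ turns $\ip{Tu}{f}_{\ip{G}{F}}$ into $\ip{u}{Sf}_{\ip{E}{D}}$, which reassembles into the pairing of $u \otimes y$ against $Sf \otimes x$. By bilinearity this yields
\[
\ip{(T \otimes I_{Y})e}{\phi}_{\ip{G(Y)}{F \tilde{\otimes} X}} = \ip{e}{(S \otimes I_{X})\phi}_{\ip{E(Y)}{D \tilde{\otimes} X}}, \qquad e \in E \otimes Y,\ \phi \in F \otimes X.
\]

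With this identity in hand the equivalence is a formal consequence of the duality principle. Assuming (a), the principle gives that the adjoint $(T_{Y})'$ maps $F \tilde{\otimes} X$ into $D \tilde{\otimes} X$ and is weakly continuous exactly as in (b); for $\phi \in F \otimes X$, comparing $\ip{e}{(T_{Y})'\phi}$ with $\ip{e}{(S \otimes I_{X})\phi}$ for all $e \in E \otimes Y$ and invoking that $E \otimes Y$ separates $D \tilde{\otimes} X$ shows $(T_{Y})'\phi = (S \otimes I_{X})\phi$, so $(T_{Y})'$ extends $S \otimes I_{X}$ and we may take $S_{X} := (T_{Y})'$ (unique, since $F \otimes X$ is norm-dense in $F \tilde{\otimes} X$). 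The converse is symmetric: assuming (b), set $T_{Y} := (S_{X})'$, which is weakly continuous as in (a), and the displayed identity together with the fact that $F \otimes X$ separates $G(Y)$ forces $T_{Y}e = (T \otimes I_{Y})e$ on $E \otimes Y$. In both directions $S_{X}$ and $T_{Y}$ are adjoints of one another by the involutivity $R'' = R$ recalled in Subsection 2.1, and the weak density of $E \otimes Y$ in $E(Y)$ makes the weakly continuous extension $T_{Y}$ unique.

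Finally, to see that $T_{Y}$ is the $Y$-valued extension in the sense of \eqref{eq:the_extension_problem_relation}, I would fix $e \in E(Y)$, $x \in X$ and test the candidate identity $\ip{x}{T_{Y}e} = T\ip{x}{e}$ in $G$ against an arbitrary $f \in F$: writing $\ip{\ip{x}{T_{Y}e}}{f}_{\ip{G}{F}}$ as the integrated pairing $\ip{T_{Y}e}{f \otimes x}_{\ip{G(Y)}{F \tilde{\otimes} X}}$, applying the adjoint relation $(T_{Y})' = S_{X}$, and running the K\"othe adjointness of $T$ and $S$ backwards reproduces $\ip{T\ip{x}{e}}{f}_{\ip{G}{F}}$; since $F$ separates $G$ this gives \eqref{eq:the_extension_problem_relation}. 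I expect the only genuine obstacle to be conceptual rather than computational: because $E$ need not have an order-continuous norm, $E \otimes Y$ is not norm-dense in $E(Y)$, so $T_{Y}$ cannot be produced by ordinary continuous extension. The whole argument is therefore arranged to push all density requirements onto $F \tilde{\otimes} X$, where norm-density of $F \otimes X$ does hold, and to replace the missing norm-density of $E \otimes Y$ by its weak density.
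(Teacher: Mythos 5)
Your proposal is correct and follows essentially the same route as the paper: the same adjointness computation on elementary tensors, the same definition $S_{X}:=(T_{Y})'$ (and symmetrically $T_{Y}:=(S_{X})'$) via the weak-continuity/adjoint principle, the same use of $E\otimes Y$ separating $D\tilde{\otimes}X$ to identify the adjoint with the tensor extension, and the same verification of \eqref{eq:the_extension_problem_relation} by testing against $f\in F$ and using that $F$ separates $G$. No gaps.
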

\begin{proof}
Note that the uniqueness in (a) and (b) follow from the $\sigma(E(Y),D \tilde{\otimes} X)$-density of $E \otimes Y$ in $E(Y)$ and the $\sigma(F \tilde{\otimes} X,G(Y))$-density of $F \otimes X$ in $F \tilde{\otimes} X$. The adjoint part in the last statement is contained in the proof of the implications "(a)$\ra$(b)" and "(b)$\ra$(a)". That $T_{Y}$ then is the $Y$-valued extension of $T$ with respect to $\ip{X}{Y}$ can be seen as follows: Given $e \in E(Y)$ and $x \in X$, we have
\begin{align*}
\ip{f}{\ip{x}{T_{Y}e}_{\ip{X}{Y}}}_{\ip{F}{G}}
&= \ip{f \otimes x}{T_{Y}e}_{\ip{F \tilde{\otimes}X}{G(Y)}} =
\ip{Sf \otimes x}{e}_{\ip{D \tilde{\otimes} X}{E(Y)}} \\
&= \ip{Sf}{\ip{x}{e}_{\ip{X}{Y}}}_{\ip{D}{E}} = \ip{f}{S\ip{x}{e}_{\ip{X}{Y}}}_{\ip{F}{G}}
\end{align*}
for every $f \in F$. As $F$ is separating for $G$, this shows $\ip{x}{T_{Y}e} = T\ip{x}{e}$.

"(a)$\ra$(b)":  Let $S_{X}:=(T_{Y})^{'} \in  \mathcal{L}(F \tilde{\otimes} X,D \tilde{\otimes} X)$ be the the adjoint of $T_{Y}$. A computation similar to the above one yields that $\ip{S_{X}f}{e}_{\ip{D \tilde{\otimes} X}{E(Y)}} = \ip{(S \otimes I_{X})f}{e}_{\ip{D \tilde{\otimes} X}{E(Y)}} $ for all $f \in F \otimes X$ and $e \in E \otimes Y$ while $E \otimes Y$ separates the points of $D \tilde{\otimes} X$, whence $S_{X}f = (S \otimes I_{X})f$ for all $f \in F \otimes X$. This gives (b).

"(b)$\ra$(a)": Completely analogous to the implication "(a)$\ra$(b)".
\end{proof}

In the next two subsections we will use Lemma \ref{lemma:def_extension_via_boundedly-complete_basis} and Lemma \ref{eq:relation_ext_S_T}
to obtain our two main extension results, Theorem \ref{thm:main_result2_extension_reflexive} and Theorem \ref{thm:main_result}.

\subsection{Extensions with respect to $\ip{X}{Y} = \ip{Y^{*}}{Y}$ with $Y$ Reflexive}\label{subsection:ext_reflexive}

\begin{thm}\label{thm:main_result2_extension_reflexive}
Let $E$ and $G$ be two Banach function spaces and let $T \in M(E,G)$. If $Y$ is a reflexive Banach space, then the $Y$-valued extension $T_{Y} \in \mathcal{L}(E(Y),G(Y))$ of $T$ with respect to $\ip{Y^{*}}{Y}$ (in the sense of \eqref{eq:the_extension_problem_relation}) exists and is of norm $\norm{T_{Y}} \leq \norm{T}_{M(E,G)}$.
\end{thm}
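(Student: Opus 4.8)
The plan is to construct $T_Y$ in three stages of increasing generality, throughout exploiting that the pair $\ip{Y^*}{Y}$ is automatically norming: since $\norm{y}=\sup_{y^*\in B_{Y^*}}|\ip{y^*}{y}|$ by Hahn--Banach, once I produce a map $T_Y:E(Y)\longra G(Y)$ satisfying \eqref{eq:the_extension_problem_relation}, Remark~\ref{rmk:eq;the_extension_problem_relation_mapping} delivers both boundedness and the estimate $\norm{T_Y}\leq\norm{T}_{M(E,G)}$ for free. Consequently I need only establish \emph{existence}, and I may ignore all norm constants introduced along the way (in particular any isomorphism constants from Zippin's theorem).

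First I would settle the case that $Y$ is reflexive and carries a Schauder basis $\{b_n\}_{n\in\N}$, with biorthogonal functionals $\{b_n^*\}_{n\in\N}$. By Fact~\ref{fact:bases_reflexivity}.(I) this basis is then both boundedly-complete and shrinking. Bounded-completeness lets me invoke Lemma~\ref{lemma:def_extension_via_boundedly-complete_basis}, which yields the $Y$-valued extension $T_Y$ with respect to $\ip{X}{Y}$, where $X:=\overline{\mathrm{span}}\{b_n^*\}\subset Y^*$. Since the basis is shrinking, $\{b_n^*\}_{n\in\N}$ is a basis of $Y^*$, so $X=Y^*$; hence $T_Y$ is exactly the extension with respect to $\ip{Y^*}{Y}$ sought.

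Next I would pass to an arbitrary separable reflexive $Y$ via Zippin's embedding (Fact~\ref{fact:bases_reflexivity}.(II)): identify $Y$ with a closed subspace of a reflexive Banach space $Z$ admitting a Schauder basis (the resulting equivalent norm on $Y$ is harmless by the first paragraph, giving $E(Y)\subset E(Z)$). The previous step provides $T_Z\in\mathcal{L}(E(Z),G(Z))$, and the decisive point is to show that $T_Z$ restricts to $E(Y)$, i.e.\ that $T_Z e\in G(Y)$ whenever $e\in E(Y)$. For each $z^*\in Y^{\perp}\subset Z^*$ one has $\ip{z^*}{e(a)}=0$ for a.e.\ $a$, hence $\ip{z^*}{e}=0$ in $E$ and therefore $\ip{z^*}{T_Z e}=T\ip{z^*}{e}=0$ in $G$, i.e.\ $b\mapsto\ip{z^*}{(T_Z e)(b)}$ vanishes a.e. The obstacle is that this is one null set per functional, whereas I need $(T_Z e)(b)\in Y$ on a single set of full measure. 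I expect this to be the main technical difficulty, and I would resolve it through separability: the shrinking basis of $Z$ has biorthogonal functionals spanning $Z^*$, so $Z^*$ — and hence the closed subspace $Y^{\perp}$ — is separable; choosing a countable dense subset of $Y^{\perp}$ and intersecting the corresponding null sets, continuity gives $\ip{z^*}{(T_Z e)(b)}=0$ for all $z^*\in Y^{\perp}$ and a.e.\ $b$, i.e.\ $(T_Z e)(b)\in {}^{\perp}(Y^{\perp})=Y$ a.e. By the remark following the Pettis measurability theorem in Subsection~2.4, $T_Z e$ is then strongly measurable as a $Y$-valued function with $\norm{T_Z e}_Y\in G$, so $T_Z e\in G(Y)$. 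Putting $T_Y:=T_Z|_{E(Y)}$ and extending any $y^*\in Y^*$ to $z^*\in Z^*$ by Hahn--Banach gives $\ip{y^*}{T_Y e}=\ip{z^*}{T_Z e}=T\ip{z^*}{e}=T\ip{y^*}{e}$, so \eqref{eq:the_extension_problem_relation} holds.

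Finally, for general reflexive $Y$ I would localize. Any $e\in E(Y)$ is essentially separably valued, so its values lie a.e.\ in a separable closed subspace $Y_0\subset Y$, which is again reflexive and to which the previous step applies; I define $T_Y e:=T_{Y_0}e\in G(Y_0)\subset G(Y)$. Restricting functionals to $Y_0$ shows that $T_Y e$ satisfies \eqref{eq:the_extension_problem_relation} for $\ip{Y^*}{Y}$, whence by the uniqueness inherent in that relation (the functionals separate the essentially-separable range) $T_Y e$ is independent of the choice of $Y_0$; additivity and homogeneity follow by selecting a common $Y_0$ for finitely many inputs. This produces the required operator $T_Y$, and, as noted at the outset, the norm bound $\norm{T_Y}\leq\norm{T}_{M(E,G)}$ is then immediate from Remark~\ref{rmk:eq;the_extension_problem_relation_mapping} via the norming property of $\ip{Y^*}{Y}$.
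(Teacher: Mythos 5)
Your proposal is correct and follows essentially the same route as the paper: reduce to separable reflexive $Y$ by essential separability of the range, embed into a reflexive space with a Schauder basis via Zippin's theorem, and construct the extension there using the boundedly-complete (and shrinking) basis as in Lemma~\ref{lemma:def_extension_via_boundedly-complete_basis}, with the norm bound coming from Remark~\ref{rmk:eq;the_extension_problem_relation_mapping}. You present the steps in the opposite order (bottom-up rather than by successive reduction) and you spell out the null-set issue in the restriction step of Lemma~\ref{lemma:thm;main_result2_extension_reflexive_Reduction}.(ii) -- resolved correctly via separability of $Y^{\perp}$ -- which the paper leaves implicit in the phrase ``$Y={}^{\perp}(Y^{\perp})$''.
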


As an immediate consequence of this theorem and Fact \ref{thm:all_bdd_op_reg} we have:
\begin{cor}
Let $E$ and $G$ be two Banach function spaces such that one of the following four conditions is satisfied:
\begin{itemize}
\item[(i)] $G$ has a strong order unit;
\item[(ii)] $E$ is lattice isomorphic to an AL-space and $G$ has a Levi norm;
\item[(iii)] $E$ is lattice isomorphic to an atomic AL-space;
\item[(iv)] $E$ is atomic with order continuous norm and $G$ is an AM-space.
\end{itemize}
Then, for every $T \in \mathcal{L}(E,G)$ and every reflexive Banach space $Y$, the $Y$-valued extension $T_{Y} \in \mathcal{L}(E(Y),G(Y))$ of $T$ with respect to $\ip{Y^{*}}{Y}$ exists. Moreover, in case of (i) and (ii), if $G$ has a Fatou norm, then we have the norm estimate $\norm{T_{Y}} \leq \norm{T}$.
\end{cor}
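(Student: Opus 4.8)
The plan is to reduce, via Zippin's embedding theorem (Fact~\ref{fact:bases_reflexivity}.(II)), to a reflexive space carrying a Schauder basis, to build an extension on that larger space by means of Lemma~\ref{lemma:def_extension_via_boundedly-complete_basis}, and then to restrict this extension back to $Y$. I would first treat the case that $Y$ is separable. Here Fact~\ref{fact:bases_reflexivity}.(II) furnishes an isomorphism $\iota$ of $Y$ onto a closed subspace $\tilde{Y} := \iota(Y)$ of a reflexive Banach space $Z$ with Schauder basis $\{b_{n}\}_{n \in \N}$ and biorthogonal functionals $\{b_{n}^{*}\}_{n \in \N}$. Since $Z$ is reflexive, Fact~\ref{fact:bases_reflexivity}.(I) guarantees that $\{b_{n}\}_{n \in \N}$ is both boundedly-complete and shrinking; the boundedly-completeness lets me invoke Lemma~\ref{lemma:def_extension_via_boundedly-complete_basis}, while shrinkingness means precisely that $\overline{\mathrm{span}}\{b_{n}^{*}\} = Z^{*}$, so that the space $X$ produced by that lemma is all of $Z^{*}$. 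In this way I obtain a $Z$-valued extension $T_{Z} \in \mathcal{L}(E(Z),G(Z))$ satisfying
\[
\ip{z^{*}}{T_{Z}f} = T\ip{z^{*}}{f}, \qquad f \in E(Z),\ z^{*} \in Z^{*}.
\]

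The heart of the argument, and what I expect to be the \emph{main obstacle}, is to show that $T_{Z}$ maps the subspace $E(\tilde{Y}) \subseteq E(Z)$ into $G(\tilde{Y})$. The difficulty is genuine: the partial sums $\sum_{n=0}^{N} T\ip{b_{n}^{*}}{f} \otimes b_{n}$ defining $T_{Z}f$ take their values in $\mathrm{span}\{b_{n}\} \subseteq Z$ and need not lie in $G(\tilde{Y})$ at all, so only the limit can be expected to stay in $\tilde{Y}$. To see that it does, I would argue through the annihilator: for $f \in E(\tilde{Y})$ and $z^{*} \in \tilde{Y}^{\perp}$ the function $\ip{z^{*}}{f}$ vanishes a.e., hence $\ip{z^{*}}{T_{Z}f} = T0 = 0$. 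Because $Z$ is separable and reflexive, $Z^{*}$ is separable, so I may pick a countable dense set in $\tilde{Y}^{\perp}$ and thereby find a \emph{single} null set off which $\ip{z^{*}}{T_{Z}f(a)} = 0$ for all $z^{*} \in \tilde{Y}^{\perp}$ simultaneously. Since $\tilde{Y}$ is closed, Hahn--Banach gives $\tilde{Y} = {}^{\perp}(\tilde{Y}^{\perp})$, so $T_{Z}f(a) \in \tilde{Y}$ a.e., i.e.\ $T_{Z}f \in G(\tilde{Y})$. This is exactly where I need both the separability of $Z^{*}$ and the validity of the extension relation on all of $Z^{*}$ (and hence the shrinkingness of the basis).

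With this in hand I would transport the operator back through $\iota$, setting $T_{Y}e := \iota^{-1} \circ T_{Z}(\iota \circ e)$ for $e \in E(Y)$, a well-defined element of $G(Y)$ by the previous step. To check the defining relation, given $y^{*} \in Y^{*}$ I would extend the functional $y^{*} \circ \iota^{-1}$ on $\tilde{Y}$ to some $z^{*} \in Z^{*}$ by Hahn--Banach, so that $z^{*} \circ \iota = y^{*}$; since $T_{Z}(\iota \circ e)$ is $\tilde{Y}$-valued this yields
\[
\ip{y^{*}}{T_{Y}e} = \ip{z^{*}}{T_{Z}(\iota \circ e)} = T\ip{z^{*}}{\iota \circ e} = T\ip{y^{*}}{e}.
\]
Thus $T_{Y}$ satisfies \eqref{eq:the_extension_problem_relation} with respect to $\ip{Y^{*}}{Y}$. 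As $\ip{Y^{*}}{Y}$ is norming, Remark~\ref{rmk:eq;the_extension_problem_relation_mapping} then automatically upgrades $T_{Y}$ to a bounded linear operator with $\norm{T_{Y}} \leq \norm{T}_{M(E,G)}$; note this sidesteps the fact that Zippin's embedding is merely an isomorphism, since the norm bound comes from the norming pairing rather than from $\iota$.

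Finally, for arbitrary reflexive $Y$ I would remove the separability assumption by localizing in $e$. Every $e \in E(Y)$ is, by strong measurability, essentially valued in a separable closed subspace $Y_{0} \subseteq Y$, which is again reflexive; applying the separable case to $Y_{0}$ and putting $T_{Y}e := T_{Y_{0}}e \in G(Y_{0}) \subseteq G(Y)$ defines a map on all of $E(Y)$. The computation above shows $T_{Y}e$ satisfies \eqref{eq:the_extension_problem_relation} and is in particular independent of the chosen $Y_{0}$, and a final appeal to Remark~\ref{rmk:eq;the_extension_problem_relation_mapping} (linearity, boundedness, and the norm estimate) completes the proof.
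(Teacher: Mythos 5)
There is a genuine gap: you have written out a proof of Theorem~\ref{thm:main_result2_extension_reflexive} (the reflexive extension theorem for $T \in M(E,G)$), not a proof of this corollary. The corollary's hypothesis is only that $T \in \mathcal{L}(E,G)$ is an \emph{arbitrary} bounded operator; its entire content beyond Theorem~\ref{thm:main_result2_extension_reflexive} is that under each of the structural conditions (i)--(iv) on $E$ and $G$, every bounded linear operator $E \to G$ is automatically regular, hence lies in $M(E,G)$ --- this is Fact~\ref{thm:all_bdd_op_reg}. Your argument never establishes (or even mentions) that $T$ is dominated by a positive operator, yet every step you take presupposes it: Lemma~\ref{lemma:def_extension_via_boundedly-complete_basis} is stated only for $T \in M(E,G)$ (its proof needs a positive dominant $R$ to get the pointwise bound $\norm{\sum_{n=0}^{N} T\ip{b_n^*}{e}\otimes b_n}_Y \leq KR\norm{e}_Y$ and hence bounded partial sums, without which bounded-completeness of the basis gives you nothing), and Remark~\ref{rmk:eq;the_extension_problem_relation_mapping} yields the estimate $\norm{T_Y} \leq \norm{T}_{M(E,G)}$ only under the hypothesis $T \in M(E,G)$. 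Conditions (i)--(iv) play no role anywhere in your proposal, which is a sure sign the reduction they are there to provide has been skipped. You also do not address the ``moreover'' clause: the estimate $\norm{T_Y} \leq \norm{T}$ in cases (i) and (ii) under a Fatou norm comes from the equality $\norm{T}_{M(E,G)} = \norm{T}_{r} = \norm{T}$ in the last part of Fact~\ref{thm:all_bdd_op_reg}, again unavailable without invoking that fact.

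The intended proof is two lines: by Fact~\ref{thm:all_bdd_op_reg}, each of (i)--(iv) forces $T \in \mathcal{L}_r(E,G) \subset M(E,G)$ (for (i) one uses that a Banach function space $G$ with a strong order unit satisfying the Levi-type completeness needed is Dedekind complete in the relevant cases), so Theorem~\ref{thm:main_result2_extension_reflexive} applies and gives $T_Y$ with $\norm{T_Y} \leq \norm{T}_{M(E,G)}$; when $G$ has a Fatou norm, $\norm{T}_{M(E,G)} = \norm{T}$ in cases (i) and (ii). The material you did write --- Zippin's theorem, the shrinking/boundedly-complete basis, and the annihilator argument showing $T_Z$ maps $E(\tilde Y)$ into $G(\tilde Y)$ --- is essentially the paper's proof of Theorem~\ref{thm:main_result2_extension_reflexive} together with Lemma~\ref{lemma:thm;main_result2_extension_reflexive_Reduction}, and that part is sound; it is simply answering a different question than the one posed.
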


In combination with Fact \ref{fact:bases_reflexivity}.(II), the next lemma allows us to reduce the proof of the theorem to
the case that $Y$ is a reflexive Banach space with a Schauder basis.

\begin{lemma}\label{lemma:thm;main_result2_extension_reflexive_Reduction}
Let $E$ and $G$ be two Banach function spaces, $T \in \mathcal{L}(E,G)$, and $Y$ a Banach space.
\begin{itemize}
\item[(i)] If $T$ has a $U$-valued extension $T_{U}$ with respect to $\ip{U^{*}}{U}$ for every separable closed linear subspace $U$ of $Y$,
then $T$ also has a $Y$-valued extension $T_{Y}$ with respect to $\ip{Y^{*}}{Y}$.
\item[(ii)] If $Y$ is a closed linear subspace of a Banach space $Z$ for which $T$ has a $Z$-valued extension $T_{Z}$ with respect to
$\ip{Z^{*}}{Z}$, then $T$ also has $Y$-valued extension $T_{Y}$ with respect to $\ip{Y^{*}}{Y}$.
\end{itemize}
\end{lemma}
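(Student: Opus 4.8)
The plan is to build $T_{Y}$ by hand as a map $E(Y) \longra G(Y)$ satisfying the extension relation \eqref{eq:the_extension_problem_relation}, and then invoke Remark~\ref{rmk:eq;the_extension_problem_relation_mapping}, which automatically upgrades any such map to a bounded linear operator. The mechanism common to both parts is that every $e \in E(Y)$ is (a.e.\ equal to) a strongly measurable function and is therefore essentially separably valued by the Pettis measurability theorem; hence the closed linear span of its essential range is a separable closed subspace $U_{e}$ of $Y$, and the action of $T$ can be localized to such separable subspaces.

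For (i), given $e \in E(Y)$ I would take $U_{e} \subseteq Y$ to be the (separable, closed) linear span of the essential range of $e$, so that $e \in E(U_{e})$, and set $T_{Y}e := T_{U_{e}}e \in G(U_{e}) \subseteq G(Y)$ using the hypothesised $U_{e}$-valued extension. Since $U_{e}$ is canonically determined by the equivalence class of $e$, this is well defined with \emph{no} compatibility assumption between the various $T_{U}$. To check \eqref{eq:the_extension_problem_relation}, fix $y^{*} \in Y^{*}$ and put $u^{*} := y^{*}|_{U_{e}}$; because $T_{U_{e}}e$ takes its values in $U_{e}$ and $e$ is $U_{e}$-valued, one has $\ip{y^{*}}{T_{Y}e} = \ip{u^{*}}{T_{U_{e}}e} = T\ip{u^{*}}{e} = T\ip{y^{*}}{e}$, the middle equality being the defining relation of $T_{U_{e}}$ with respect to $\ip{U_{e}^{*}}{U_{e}}$. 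Remark~\ref{rmk:eq;the_extension_problem_relation_mapping} then finishes (i).

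For (ii), given $e \in E(Y) \subseteq E(Z)$ I would form $g := T_{Z}e \in G(Z)$ and define $T_{Y}e := g$, the crux being to show that $g$ is $Y$-valued a.e. Here I would choose one separable closed subspace $M \subseteq Z$ containing the essential ranges of \emph{both} $e$ and $g$; note $e$ is then $(Y \cap M)$-valued. Since $M/(Y\cap M)$ is separable its dual admits a countable point-separating subset, which pulled back through the quotient map yields a countable family $\{m_{k}^{*}\} \subseteq M^{*}$ annihilating $Y \cap M$ with $\bigcap_{k} \ker m_{k}^{*} = Y \cap M$. Extend each $m_{k}^{*}$ by Hahn--Banach to $z_{k}^{*} \in Z^{*}$. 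The decisive point is that $z_{k}^{*}$ need \emph{not} lie in $Y^{\perp}$: as $e$ is $(Y \cap M)$-valued and $m_{k}^{*}$ annihilates $Y \cap M$, we get $\ip{z_{k}^{*}}{e} = \ip{m_{k}^{*}}{e} = 0$ in $E$, hence $\ip{z_{k}^{*}}{g} = T\ip{z_{k}^{*}}{e} = 0$ and therefore $\ip{m_{k}^{*}}{g(a)} = \ip{z_{k}^{*}}{g(a)} = 0$ for a.e.\ $a$ (using $g(a) \in M$). Intersecting over the countable family gives $g(a) \in Y \cap M \subseteq Y$ a.e., so $g \in G(Y)$; the relation \eqref{eq:the_extension_problem_relation} for $T_{Y}$ then follows by extending an arbitrary $y^{*} \in Y^{*}$ to $Z^{*}$ exactly as in (i), and Remark~\ref{rmk:eq;the_extension_problem_relation_mapping} completes the proof.

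The verifications of \eqref{eq:the_extension_problem_relation} are routine; the one genuinely delicate step is the a.e.\ $Y$-valuedness of $T_{Z}e$ in (ii). The naive idea of testing $T_{Z}e$ against the entire annihilator $Y^{\perp}$ fails, because that family is uncountable and admits no countable determining subfamily once $Z/Y$ is non-separable. The device that I expect to be the heart of the argument is to localize everything inside a single separable subspace $M$ that captures the essential ranges of $e$ and $T_{Z}e$: the quotient $M/(Y\cap M)$ is then separable so a countable separating family exists, and the fact that $e$ is $(Y\cap M)$-valued is precisely what allows unrestricted Hahn--Banach extensions of these functionals to be used in the defining relation of $T_{Z}$.
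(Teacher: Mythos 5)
Your proposal is correct and follows the same route as the paper: for (i) you localize each $e\in E(Y)$ to the separable closed subspace spanned by its essential range and apply Remark~\ref{rmk:eq;the_extension_problem_relation_mapping}, and for (ii) you define $T_{Y}$ as the restriction of $T_{Z}$ to $E(Y)$ and use Hahn--Banach, exactly as in the paper. The only difference is one of detail: where the paper disposes of the a.e.\ $Y$-valuedness of $T_{Z}e$ with the single phrase ``$Y={^{\perp}}(Y^{\perp})$'', you correctly note that the null sets depend on the functional and supply the needed countable reduction by passing to a separable subspace $M$ containing the essential ranges of $e$ and $T_{Z}e$ and a countable separating family for $M/(Y\cap M)$.
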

\begin{proof}
(i) This follows easily from the fact that every $f \in E(Y)$ may be viewed as an element of $E(U) \subset E(Y)$ for some separable closed linear subspace $U$ of $Y$ in combination with Remark \ref{rmk:eq;the_extension_problem_relation_mapping}.

(ii) Viewing $E(Y)$ as closed linear subspace of $E(Z)$, $T_{Z}$ restricts to an operator on $E(Y)$ as consequence of the fact that $Y = {^{\perp}}(Y^{\perp})$. From Hahn-Banach it follows that $T_{Y}:= T_{Z}\big|_{E(Y)}$ is a $Y$-valued extension of $T$ with respect to $\ip{Y^{*}}{Y}$.
\end{proof}

We are now ready to give a clean proof of the theorem.
\begin{proof}[Proof of Theorem \ref{thm:main_result2_extension_reflexive}]
First, in view of (i) of the above lemma and the fact that a closed linear subspace of a reflexive Banach space is a reflexive Banach space on its own right, it suffices to consider the case that $Y$ is a separable reflexive Banach space. 
Next, in view of (ii) of the above lemma and Fact~\ref{fact:bases_reflexivity}.(II), it is in turn enough to treat the case that $Y$ is a reflexive Banach space having a Schauder basis. By Fact~\ref{fact:bases_reflexivity}.(I), this basis is both boundedly-complete and shrinking. The existence of $T_{Y}$ now follows from an application
of Lemma \ref{lemma:def_extension_via_boundedly-complete_basis}; for the norm estimate we refer to Remark \ref{rmk:eq;the_extension_problem_relation_mapping}.
\end{proof}

\begin{rmk}
The use of Fact \ref{fact:bases_reflexivity}.(II) in the above proof can be avoided in the special case that $G$ is a Banach function space over $(B,\mathscr{B},\nu) = (B,\mathcal{P}(B),\#)$; see Remark~\ref{rmk:lemma:def_extension_via_boundedly-complete_basis;predual}.
\end{rmk}

In the next example we show that for the non-reflexive Banach spaces $Y=c_{0}$ and $Y=\ell^{1}$ the statement of Theorem \ref{thm:main_result2_extension_reflexive} does not hold. Note that in both cases $Y$ has a Schauder basis (the standard basis),
with in case $Y=c_{0}$ a basis which is shrinking but not boundedly-complete and in case $Y=\ell^{1}$ a basis which is boundedly-complete but not shrinking; also see Fact \ref{fact:bases_reflexivity}.(I).

\begin{ex}\label{ex:counterexample}
Let $E=\ell^{\infty}$ and $G=\K$. Take $T \in \mathcal{L}(\ell^{\infty},\K) = (\ell^{\infty})^{*}$ to be a Banach limit (see Definition \ref{def:Banach_limit}). Then $T$ is a positive operator, but for $Y \in \{c_{0},\ell^{1}\}$ the $Y$-valued extension $T_{Y} \in \mathcal{L}(\ell^{\infty}(Y),Y)$ of $T$ with respect to $\ip{Y^{*}}{Y}$ does not exist.
\end{ex}
\begin{proof}
Let us first treat the case $Y=c_{0}$.
To the contrary we assume that $T_{c_{0}}$ does exist. By Lemma \ref{lemma:if_ext_exists_then}.(i) (equation \eqref{eq:formula_via_basis}) we must then have
\[
(Tf_{k})_{k \in \N} = T_{c_{0}}f \in c_{0}
\]
for all $f=(f_{k})_{k \in \N} \in \ell^{\infty}(c_{0})$; here $f_{k}$ is the $k$-th coordinate in $c_{0}$ of $f$ (with respect to the standard basis). But for
\[
f=(f_{k})_{k \in \N} \in \ell^{\infty}(c_{0}) \quad \mbox{given by}\quad  f_{k} := 1_{\{k,k+1,\ldots\}}
\]
we have $(Tf_{k})_{k \in \N} = \mathbf{1} \notin c_{0}$, a contradiction.

Next we treat the case $Y=\ell^{1}$. We again assume to the contrary that $T_{\ell^{1}}$ does exist. By Lemma \ref{lemma:if_ext_exists_then}.(i) (equation \eqref{eq:formula_via_basis}) we must then have
\[
(Tf_{k})_{k \in \N} = T_{\ell^{1}}f \in \ell^{1}
\]
for all $f=(f_{k})_{k \in \N} \in \ell^{\infty}(\ell^{1})$; here $f_{k}$ is the $k$-th coordinate in $\ell^{1}$ of $f$ (with respect to the standard basis). But for
\[
f=(f_{k})_{k \in \N} \in \ell^{\infty}(\ell^{1}) \quad \mbox{given by}\quad  f_{k} := 1_{\{k\}}
\]
and $\mathbf{1} \in \ell^{\infty} = (\ell^{1})^{*}$ this yields
\[
0 = \sum_{k=0}^{\infty}0 = \sum_{k=0}^{\infty}Tf_{k} = \ip{\mathbf{1}}{T_{\ell^{1}}f} = T\ip{\mathbf{1}}{f} = T\mathbf{1} = 1,
\]
a contradiction.
\end{proof}

\begin{rmk}
In \cite{Banach_limits} it is shown that Banach spaces $1$-complemented in their bidual admit vector-valued Banach limits, whereas $c_{0}$ does not. Since a $Y$-valued extension with respect to a norming dual pair $\ip{X}{Y}$ of a Banach limit is a vector-valued Banach limit
on $Y$, the latter also gives an explanation for the failure of the extension for $Y=c_{0}$ in the above example. The case $Y=\ell^{1}$ in this example shows that a vector-valued Banach limit on $\ell^{1}$ cannot be obtained as an $\ell^{1}$-valued extension with respect to $\ip{\ell^{\infty}}{\ell^{1}}$ of a Banach limit; note, however, that $\ell^{1}$-valued extensions with respect to $\ip{c_{0}}{\ell^{1}}$ of Banach limits exist by Lemma \ref{lemma:def_extension_via_boundedly-complete_basis} (also see Remark~\ref{rmk:lemma:def_extension_via_boundedly-complete_basis;predual}). Finally, observe that, by Theorem \ref{thm:main_result2_extension_reflexive}, every reflexive Banach space $Y$ admits vector-valued Banach limits which are $Y$-valued extensions with respect to $\ip{Y^{*}}{Y}$ of Banach limits.
\end{rmk}

Combining this example with Fact \ref{fact:bases_reflexivity}.(III),
Lemma \ref{lemma:thm;main_result2_extension_reflexive_Reduction}.(ii), and Theorem \ref{thm:main_result2_extension_reflexive},
we see that, for $Y$ in a wide class of Banach spaces (including the Banach lattices), Theorem \ref{thm:main_result2_extension_reflexive}
even characterizes the reflexivity of $Y$:

\begin{cor}\label{cor:char_ext_Banach_limit}
Let $Y$ be a closed linear subspace of a Banach lattice $E$ such that: $Y$ is complemented in $E$ or $E$ has an order continuous norm.
Given a Banach limit $T \in \mathcal{L}(\ell^{\infty},\K) = (\ell^{\infty})^{*}$, the following statements are equivalent.
\begin{itemize}
\item[(a)] $Y$ is reflexive;
\item[(b)] $T$ has a $Y$-valued extension $T_{Y} \in \mathcal{L}(\ell^{\infty}(Y),Y)$ with respect to $\ip{Y^{*}}{Y}$;
\item[(c)] $Y$ does not have linear subspaces isomorphic to $c_{0}$ or $\ell_{1}$.
\end{itemize}
\end{cor}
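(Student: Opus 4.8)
The plan is to prove the equivalences by combining four ingredients already in the paper: the reflexivity result Fact~\ref{fact:bases_reflexivity}.(III), the positive-domination extension Theorem~\ref{thm:main_result2_extension_reflexive}, the counterexample of Example~\ref{ex:counterexample}, and the subspace reduction Lemma~\ref{lemma:thm;main_result2_extension_reflexive_Reduction}.(ii). First I would dispose of the equivalence (a)$\,\Leftrightarrow\,$(c): this is literally Fact~\ref{fact:bases_reflexivity}.(III) applied to the closed subspace $Y$ of the Banach lattice $E$, whose standing hypotheses (that $Y$ is complemented in $E$ or that $E$ has an order continuous norm) are exactly those assumed in the corollary. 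It therefore suffices to connect (b) to these two, which I would do by proving (a)$\,\Rightarrow\,$(b) and (b)$\,\Rightarrow\,$(c), thereby closing the cycle (a)$\,\Rightarrow\,$(b)$\,\Rightarrow\,$(c)$\,\Rightarrow\,$(a).

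For (a)$\,\Rightarrow\,$(b): since $T$ is a Banach limit it is a positive functional, and a positive operator dominates itself (one has $|Te| \leq T|e|$ for all $e \in \ell^{\infty}$), so $T \in M(\ell^{\infty},\K)$ with $\norm{T}_{M(\ell^{\infty},\K)} = \norm{T}$. Assuming $Y$ reflexive, I would apply Theorem~\ref{thm:main_result2_extension_reflexive} with $E=\ell^{\infty}$ and $G=\K$, noting $\K(Y)=Y$, to obtain the $Y$-valued extension $T_{Y} \in \mathcal{L}(\ell^{\infty}(Y),Y)$ of $T$ with respect to $\ip{Y^{*}}{Y}$. This is precisely (b).

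For (b)$\,\Rightarrow\,$(c) I would argue contrapositively. Suppose (c) fails, so $Y$ contains a closed subspace $U$ isomorphic to $c_{0}$ or to $\ell^{1}$. The key preliminary observation is that existence of a $W$-valued extension of $T$ with respect to $\ip{W^{*}}{W}$ is invariant under linear isomorphisms of $W$: given an isomorphism $\Phi\colon U \longra c_{0}$ (respectively $\ell^{1}$), the induced map $f \mapsto \Phi \circ f$ on the K\"othe--Bochner spaces and the induced map $\Phi^{*}$ on the duals transport a hypothetical extension between the two spaces, as one checks against the defining relation \eqref{eq:the_extension_problem_relation}. Hence Example~\ref{ex:counterexample} shows that $T$ has no $U$-valued extension. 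On the other hand, $U$ is a closed subspace of $Y$, so Lemma~\ref{lemma:thm;main_result2_extension_reflexive_Reduction}.(ii) guarantees that if (b) held, i.e.\ if $T$ had a $Y$-valued extension, then $T$ would also have a $U$-valued extension. This contradiction shows that (b) forces (c), completing the cycle.

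I expect the only step requiring genuine care to be the isomorphism-invariance invoked in the last paragraph: Example~\ref{ex:counterexample} is stated only for the standard copies of $c_{0}$ and $\ell^{1}$, whereas (c) refers to subspaces merely isomorphic to them, so one must verify explicitly that the transport by $\Phi$ and $\Phi^{*}$ preserves the defining relation \eqref{eq:the_extension_problem_relation}. This verification is routine, and every other arrow is a direct invocation of a previously established result.
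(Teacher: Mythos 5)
Your proposal is correct and follows essentially the same route as the paper, which likewise derives the corollary by combining Fact~\ref{fact:bases_reflexivity}.(III) for (a)$\Leftrightarrow$(c), Theorem~\ref{thm:main_result2_extension_reflexive} for (a)$\Rightarrow$(b), and Example~\ref{ex:counterexample} together with Lemma~\ref{lemma:thm;main_result2_extension_reflexive_Reduction}.(ii) for (b)$\Rightarrow$(c). Your explicit verification that the extension property transports along an isomorphism $\Phi$ (needed because the example treats only the standard copies of $c_{0}$ and $\ell^{1}$) is a point the paper leaves implicit, and it is correctly handled.
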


\subsection{Extensions of Adjoint Operators on $L^{\infty}$ with respect to Arbitrary Banach Dual Pairs $\ip{X}{Y}$}\label{subsec:arb_pairs}

\begin{thm}\label{thm:main_result}
Let $(A,\mathscr{A},\mu)$ be a semi-finite measure space, let $\ip{F}{G}$ be a K\"othe dual pair of Banach function spaces over a measure space $(B,\mathscr{B},\nu)$, and
let $T \in \mathcal{L}(L^{\infty}(A),G)$ be a $\sigma(L^{\infty}(A),L^{1}(A))$-to-$\sigma(G,F)$ continuous linear operator, say with adjoint $S \in \mathcal{L}(F,L^{1}(A))$.
If $T \in M(L^{\infty}(A),G)$, then we have, for any dual pair of Banach spaces $\ip{X}{Y}$, that $T \otimes I_{Y}$ has a unique extension to a $\sigma(L^{\infty}(A;Y),L^{1}(A;X))$-to-$\sigma(G(Y),F \tilde{\otimes} X)$ continuous linear operator $T_{Y} \in \mathcal{L}(L^{\infty}(A;Y),G(Y))$.
In this situation, $T_{Y}$ is the $Y$-valued extensions of $T$ with respect to $\ip{X}{Y}$ and the adjoint $S_{X} \in \mathcal{L}(F \tilde{\otimes} X,L^{1}(A;X))$ of $T_{Y}$ is the unique bounded extension of $S \otimes I_{X}$.
Moreover, $S \in M(F,L^{1}(A)) = \mathcal{L}_{r}(F,L^{1}(A))$ and these extensions are of norm $\norm{S_{X}} \leq \norm{S}_{r}$ and $\norm{T_{Y}} \leq \norm{T}_{M(L^{\infty}(A),G)}$.
\end{thm}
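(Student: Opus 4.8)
The plan is to run the adjoint strategy announced in the introduction: first promote the domination hypothesis on $T$ to regularity of the adjoint $S$, then extend $S$ tensorially using that $L^{1}(A)$ has an order continuous norm, and finally recover $T_{Y}$ by restricting the Banach space adjoint of this extension. I would organise the proof into three stages.

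\textit{Stage 1: $S$ is regular, with $\norm{S}_{r} \le \norm{T}_{M(L^{\infty}(A),G)}$.} Since $L^{1}(A)$ is a KB-space it has a Levi norm and is Dedekind complete, so that $M(F,L^{1}(A)) = \mathcal{L}_{r}(F,L^{1}(A))$ and the regularity criterion preceding Fact~\ref{thm:ext_reg_Levi} is available. Fix a positive dominant $R \in \mathcal{L}(L^{\infty}(A),G)$ of $T$. The key estimate is that, for every $f \in F$ and every $0 \le h \in L^{\infty}(A)$,
\[
\int_{A} |Sf|\,h\,d\mu = \sup_{|e|\le h}\ip{e}{Sf}_{\ip{L^{\infty}(A)}{L^{1}(A)}} = \sup_{|e|\le h}\ip{Te}{f}_{\ip{G}{F}} \le \int_{B} (Rh)\,|f|\,d\nu,
\]
where the final inequality uses $|Te| \le R|e| \le Rh$ and $R \ge 0$. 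Taking $h = \mathbf{1} \in L^{\infty}(A)$ and summing over $f_{1},\dots,f_{N} \in F$ yields
\[
\norm{\sum_{n=1}^{N} |Sf_{n}|}_{L^{1}(A)} = \int_{A} \sum_{n=1}^{N} |Sf_{n}|\,d\mu \le \int_{B} (R\mathbf{1})\sum_{n=1}^{N} |f_{n}|\,d\nu \le \norm{R}\,\norm{\sum_{n=1}^{N} |f_{n}|}_{F}.
\]
By the criterion this gives regularity of $S$ together with $\norm{S}_{r} \le \norm{R}$; taking the infimum over all dominants $R$ produces $\norm{S}_{r} \le \norm{T}_{M(L^{\infty}(A),G)}$.

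\textit{Stage 2: tensorial extension of $S$.} As $L^{1}(A)$ has an order continuous norm, $L^{1}(A)\tilde{\otimes} X = L^{1}(A;X)$. Applying Fact~\ref{thm:ext_reg} to $S \in M(F,L^{1}(A))$ produces the unique bounded extension $S_{X} \in \mathcal{L}(F\tilde{\otimes} X, L^{1}(A;X))$ of $S \otimes I_{X}$, with $\norm{S_{X}} \le \norm{S}_{M(F,L^{1}(A))} = \norm{S}_{r}$. Moreover \eqref{eq:X-norm_estimate_tensor_extension_regular_operator} gives the pointwise domination $\norm{(S_{X}\phi)(\cdot)}_{X} \le |S|\norm{\phi}_{X}$ $\mu$-a.e.\ for $\phi \in F\tilde{\otimes} X$, which I will use for the norm control in the next stage.

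\textit{Stage 3: realising the adjoint on $L^{\infty}(A;Y)$.} By Lemma~\ref{eq:relation_ext_S_T} (applied with $\ip{D}{E} = \ip{L^{1}(A)}{L^{\infty}(A)}$) it suffices to verify condition (b) there, namely that $S_{X}$ is $\sigma(F\tilde{\otimes} X, G(Y))$-to-$\sigma(L^{1}(A;X), L^{\infty}(A;Y))$ continuous; the lemma then delivers $T_{Y}$, the adjoint relation, and the identification of $T_{Y}$ as the $Y$-valued extension of $T$. Since $S_{X}$ is already bounded, the continuity criterion from Subsection~2.1 reduces this to showing that the Banach space adjoint $(S_{X})^{*}$ maps $L^{\infty}(A;Y) \subset (L^{1}(A;X))^{*}$ into $G(Y) \subset (F\tilde{\otimes} X)^{*}$. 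For $g \in L^{\infty}(A;Y)$ a computation on elementary tensors $f \otimes x$ reproduces $\ip{f}{T\ip{x}{g}}_{\ip{F}{G}}$, that is, the scalarisation $\ip{x}{(S_{X})^{*}g}$ equals $T\ip{x}{g} \in G$, together with the pointwise bound $|T\ip{x}{g}| \le \norm{x}\,R\norm{g}_{Y}$ with $R\norm{g}_{Y} \in G$. The task is thus to manufacture a genuine element $h \in G(Y)$ whose scalarisations are the $T\ip{x}{g}$ and which satisfies $\norm{h}_{Y} \le R\norm{g}_{Y}$. I expect \emph{this realisation step to be the main obstacle}, since $G$ need not have an order continuous norm and $Y$ need not be complemented in $X^{*}$. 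To carry it out I would exploit strong measurability of $g$: approximating $g$ by $Y$-valued step functions $g_{k}$ with $\norm{g_{k}}_{Y} \le \norm{g}_{Y}$, one computes $(S_{X})^{*}g_{k} = \sum_{j} (T 1_{A_{k,j}})\otimes y_{k,j} \in G \otimes Y$ explicitly, which confines everything to a separable subspace $Y_{0} \subseteq Y$ containing the essential range of $g$. Combining the uniform domination by $R\norm{g}_{Y}$ with the Pettis measurability theorem should then yield the desired $h = T_{Y}g \in G(Y)$, the scalarisations being matched via the $\sigma(L^{\infty}(A),L^{1}(A))$-to-$\sigma(G,F)$ continuity of $T$ and dominated convergence. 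Finally the norm estimate is automatic, as $T_{Y}$ is the restriction of $(S_{X})^{*}$, so that $\norm{T_{Y}} \le \norm{S_{X}} \le \norm{S}_{r} \le \norm{T}_{M(L^{\infty}(A),G)}$, completing all the assertions.
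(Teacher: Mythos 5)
Your overall strategy (regularise $S$, extend $S\otimes I_X$ via Fact~\ref{thm:ext_reg}, recover $T_Y$ by restricting $(S_X)^*$ and invoking Lemma~\ref{eq:relation_ext_S_T}) is exactly the paper's, and your Stage~1 is a legitimate alternative to the paper's route: the paper instead shows that $L^{1}(A)$ is a band in $(L^{\infty}(A))^{*}$ (Lemma~\ref{lemma:L1_band}) and factors $S=\pi\circ T^{*}\circ j$ through the positive band projection, whereas your Bukhvalov-criterion computation reaches the same conclusion (modulo the norm constant of the pairing $\ip{F}{G}$, which you implicitly take to be $1$). The genuine problem is Stage~3, which you yourself flag as the main obstacle: the resolution you sketch does not go through. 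Finite $Y$-valued step functions are not norm-dense in $L^{\infty}(A;Y)$, and pointwise a.e.\ convergence $g_{k}\to g$ with $\norm{g_{k}}_{Y}\le\norm{g}_{Y}$ only yields $\sigma((L^{1}(A;X))^{*},L^{1}(A;X))$-convergence of the $g_{k}$; hence $(S_{X})^{*}g_{k}\to(S_{X})^{*}g$ only in the weak$^{*}$ topology of $(F\tilde{\otimes}X)^{*}$, and a weak$^{*}$ limit of elements of $j(G(Y))$ need not lie in $j(G(Y))$. The functions $(T\otimes I_{Y})g_{k}$ have no reason to converge pointwise a.e.\ in $Y$, so Pettis measurability and dominated convergence have nothing to latch onto. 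The missing idea in the paper is to work with \emph{countable} step functions $h=\sum_{k}1_{A_{k}}y_{k}$ (disjoint $A_{k}$, bounded $(y_{k})$), which \emph{are} norm-dense in $L^{\infty}(A;Y)$ (Lemma~\ref{lemma:ct_step_dense}); disjointness gives $\sum_{k}|T1_{A_{k}}|\le R\mathbf{1}\in G$, and Lemma~\ref{lemma:pt_wk_limit} then produces $\sum_{k}T1_{A_{k}}\otimes y_{k}$ as an honest element of $G(Y)$ of norm at most $\norm{R}\norm{h}$, identified with $(S_{X})^{*}i(h)$ by weak$^{*}$ continuity of the Banach space adjoint.

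A second gap is the final norm estimate. You conclude $\norm{T_{Y}}\le\norm{S_{X}}$ ``since $T_{Y}$ is the restriction of $(S_{X})^{*}$'', but this presupposes that the inclusions $i:L^{\infty}(A;Y)\to(L^{1}(A;X))^{*}$ and $j:G(Y)\to(F\tilde{\otimes}X)^{*}$ are isometric (or at least that $j$ does not decrease norms), which would require $\ip{X}{Y}$ and $\ip{F}{G}$ to be norming --- neither is assumed in the theorem. The paper avoids this by reading the bound $\norm{T_{Y}h}_{G(Y)}\le\norm{R}\norm{h}$ directly off the pointwise estimate $\norm{\sum_{k}T1_{A_{k}}\otimes y_{k}}_{Y}\le\norm{(y_{k})}_{\infty}\,R\mathbf{1}$ on the dense subspace of countable step functions, and then extending by continuity.
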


We will give the proof of this theorem in the next section. In Section 5 we will use this theorem to obtain the conditional expectation operator on Banach space-valued $L^{\infty}$-spaces.

\begin{rmk}
Note that for $T \otimes I_{Y}$ to have an extension to a $\sigma(L^{\infty}(A;Y),L^{1}(A;X))$-to-$\sigma(G(Y),F \tilde{\otimes} X)$ continuous linear operator $T_{Y} \in \mathcal{L}(L^{\infty}(A;Y),G(Y))$ it is necessary that $S$ is regular.
Indeed, from Lemma \ref{eq:relation_ext_S_T} it then follows that $S \otimes I_{X}$ extends to a bounded operator $S_{X} \in \mathcal{L}(F \tilde{\otimes} X,L^{1}(A;X))$ for any Banach space $X$ (just take $\ip{X}{Y}=\ip{X}{X^{*}}$ as dual pair of Banach spaces), which by Fact \ref{thm:ext_reg_Levi} just means that $S$ is regular.

We will in fact start the proof of this theorem by showing that $S$ is regular, then extend $S \otimes I_{X}$ to a bounded linear operator
$S_{X} \in \mathcal{L}(F \tilde{\otimes} X,L^{1}(A;X))$ and obtain $T_{Y}$ by restriction of the Banach space adjoint $(S_{X})^{*} \in \mathcal{L}((L^{1}(A;X))^{*},(F \tilde{\otimes} X)^{*})$.
\end{rmk}

Next, we consider situations in which the extension of $T \otimes I_{X}$ in Theorem \ref{thm:main_result} is for free.
The idea is to impose conditions on $\ip{F}{G}$ which guarantee that $T$ is automatically regular, either via $T$ being a bounded linear operator from $L^{\infty}(A)$ to $G$ or via $S$ and the following little lemma:
\begin{lemma}\label{lemma:T_reg_if_S_is}
In addition to the assumptions of Theorem \ref{thm:main_result}, suppose that the image of $i:g \mapsto \ip{\,\cdot\,}{g}, G \longra F^{*}$ is a band in $F^{*}$. Then $T$ is regular provided that $S$ is regular.
\end{lemma}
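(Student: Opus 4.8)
The plan is to reduce the regularity of $T$ to the existence of a positive dominant, which I will manufacture by transporting a dominant of $S$ through the Banach space adjoint and then through the band projection provided by the hypothesis. The starting observation is that the adjoint relation between $T$ and $S$ can be rewritten in terms of two canonical lattice embeddings: the map $j \colon L^{\infty}(A) \longra (L^{1}(A))^{*}$ (an isometric lattice isomorphism onto its image, since $(A,\mathscr{A},\mu)$ is semi-finite) and the map $i \colon G \longra F^{*}$, $g \mapsto \ip{\,\cdot\,}{g}$ (a lattice isomorphism onto its image by the K\"othe dual pair structure). A direct computation with the pairings gives $i \circ T = S^{*} \circ j$, where $S^{*} \colon (L^{1}(A))^{*} \longra F^{*}$ is the Banach space adjoint of $S$.

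Next I would exploit the band hypothesis. Since $F$ is a Banach lattice, $F^{*}$ is Dedekind complete, and a band in a Dedekind complete Riesz space is a projection band; let $\pi \colon F^{*} \longra i(G)$ be the associated (positive) band projection. Because $i$ is a lattice isomorphism onto the band $i(G)$, which is Dedekind complete as a band in the Dedekind complete $F^{*}$, the space $G$ is itself Dedekind complete, whence $M(L^{\infty}(A),G) = \mathcal{L}_{r}(L^{\infty}(A),G)$. It therefore suffices to prove that $T \in M(L^{\infty}(A),G)$, i.e. that $T$ is dominated by a positive operator.

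To produce such a dominant I would use that $S$ is regular. As $L^{1}(A)$ is a KB-space it is Dedekind complete, so $S \in \mathcal{L}_{r}(F,L^{1}(A)) = M(F,L^{1}(A))$, and we may fix a positive $R \in \mathcal{L}(F,L^{1}(A))$ with $|Sf| \leq R|f|$ for all $f \in F$; equivalently $-R \leq S \leq R$ in $\mathcal{L}_{r}(F,L^{1}(A))$. Taking Banach space adjoints preserves positivity, so $-R^{*} \leq S^{*} \leq R^{*}$, that is, $S^{*}$ is dominated by the positive operator $R^{*} \colon (L^{1}(A))^{*} \longra F^{*}$. The crucial point is that $S^{*}(j(e)) = i(Te)$ already lies in the band $i(G)$, so $\pi$ fixes it and hence $\pi \circ S^{*} \circ j = i \circ T$. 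Setting $R_{0} := i^{-1} \circ \pi \circ R^{*} \circ j \colon L^{\infty}(A) \longra G$, a composition of positive maps and therefore positive, I would estimate, for $e \in L^{\infty}(A)$,
\[
|Te| = i^{-1}|i(Te)| = i^{-1}\big|\pi S^{*} j(e)\big| \leq i^{-1}\big(\pi |S^{*} j(e)|\big) \leq i^{-1}\big(\pi R^{*} j|e|\big) = R_{0}|e|,
\]
using that $i^{-1}$ and $j$ are lattice homomorphisms, that $\pi \geq 0$, and the domination $|S^{*} j(e)| \leq R^{*}|j(e)| = R^{*} j|e|$. Thus $T$ is dominated by $R_{0} \geq 0$, so $T \in M(L^{\infty}(A),G) = \mathcal{L}_{r}(L^{\infty}(A),G)$ is regular.

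The main obstacle, and the precise place where the band hypothesis is indispensable, is the passage back into $G$. The adjoint $S^{*}$ is controlled only as an operator into $F^{*}$, and the naive dominant $R^{*} j$ of $i \circ T$ takes values in $F^{*}$ rather than in $i(G) \cong G$; without a mechanism to return to $i(G)$ one cannot conclude that $T$ itself, viewed as an operator into $G$, admits a positive dominant. The band projection $\pi$ resolves exactly this, and the identity $\pi \circ S^{*} \circ j = i \circ T$ (valid because $i(Te)$ already sits in the band) guarantees that projecting neither alters the operator nor enlarges the dominant beyond $i(G)$.
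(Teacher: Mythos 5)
Your proof is correct and follows essentially the same route as the paper: both factor $T$ as $i^{-1}\circ P\circ S^{*}\circ j$ using the band projection $P$ onto $i(G)$ and the identity $i\circ T = S^{*}\circ j$, the only difference being that you transport a positive dominant of $S$ through the adjoint (and then use Dedekind completeness of $G$ to get $M=\mathcal{L}_{r}$), whereas the paper transports the regular decomposition of $S$ directly and observes that $\pi\circ S^{*}$ is a regular extension of $T$.
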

\begin{proof}
First note that $i(G)$ is a projection band in the Dedekind complete $F^{*}$. Let $P$ be the associated band projection.
Since $i$ is a lattice isomorphism onto its image, this projection $P$ induces a positive linear map $\pi:F^{*} \longra G$ such that
$\pi \circ i = I_{G}$. Now note that $\pi \circ S^{*}:(L^{1}(A))^{*} \longra G$ extends $T$ and is regular if $S$ is so.
\end{proof}

Note that $G$ must be Dedekind complete, being lattice isomorphic to a band in the Dedekind complete $F^{*}$.

Examples of a K\"othe dual pairs satisfying the hypotheses of this lemma are $\ip{F}{G} = \ip{L^{p}(B)}{L^{q}(B)}$ with $1<p,q<\infty$, $\frac{1}{p} + \frac{1}{q} = 1$ on an arbitrary measure space $(B,\mathcal{B},\nu)$ or $\ip{F}{G} = \ip{F}{F^{\times}}$ with $F$ a Banach function space on a Maharam measure space $(B,\mathscr{B},\nu)$ having full carrier (see Fact \ref{thm:Kothe_dual}).

\begin{cor}\label{cor:cor_main_result}
Let $(A,\mathscr{A},\mu)$ be a semi-finite measure space, let $\ip{F}{G}$ be a K\"othe dual pair of Banach function spaces over a measure space $(B,\mathscr{B},\nu)$, and
let $T: L^{\infty}(A) \longra G$ be a $\sigma(L^{\infty}(A),L^{1}(A))$-to-$\sigma(G,F)$ continuous linear operator.
In each of the following cases $T$ is automatically regular:
\begin{itemize}
\item[(i)] $G$ is Dedekind complete and has a strong order unit.
\item[(ii)] The image of $g \mapsto \ip{\,\cdot\,}{g}, G \longra F^{*}$ is a band in $F^{*}$ and $F$ is lattice isomorphic to an AL-space.
\end{itemize}
As a consequence, in each of these cases we have that, for any dual pair of Banach spaces $\ip{X}{Y}$, $T \otimes I_{Y}$ has a unique extension to a $\sigma(L^{\infty}(A;Y),L^{1}(A;X))$-to-$\sigma(G(Y),F \tilde{\otimes} X)$ continuous linear operator $T_{Y} \in \mathcal{L}(L^{\infty}(A;Y),G(Y))$, which is the $Y$-valued extension of $T$ with respect to $\ip{X}{Y}$.
In this situation, denoting by $S \in \mathcal{L}(F,L^{1}(A))$ the adjoint of $T$ with respect to the dualities
$\ip{L^{1}(A)}{L^{\infty}(A)}$ and $\ip{F}{G}$ and by $S_{X} \in \mathcal{L}(F \tilde{\otimes} X,L^{1}(A;X))$ the adjoint of $T_{X}$ with respect to the dualities $\ip{L^{1}(A)(X)}{L^{\infty}(A;Y)}$ and $\ip{F \tilde{\otimes} X}{G(Y)}$, $S \in \mathcal{L}_{r}(F,L^{1}(A))$, and $S_{X}$ is the unique bounded extension of $S \otimes I_{X}$.
Moreover, these extensions are of norm $\norm{S_{X}} \leq \norm{S}_{r}$ and $\norm{T_{Y}} \leq \norm{T}_{M(L^{\infty}(A),G)}$.
\end{cor}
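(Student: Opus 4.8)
The statement to prove is Corollary~\ref{cor:cor_main_result}, which asserts that under either hypothesis (i) or (ii), the operator $T$ is automatically regular, and consequently all the conclusions of Theorem~\ref{thm:main_result} hold.

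The plan is to verify regularity of $T$ in each of the two cases separately, and then simply invoke Theorem~\ref{thm:main_result} to obtain the extension $T_{Y}$, the adjoint $S_{X}$, and the norm estimates. Once regularity is established, the remaining conclusions require no further work: they are precisely the content of Theorem~\ref{thm:main_result}, whose hypotheses are exactly that $(A,\mathscr{A},\mu)$ is semi-finite, that $\ip{F}{G}$ is a K\"othe dual pair, that $T$ is $\sigma(L^{\infty}(A),L^{1}(A))$-to-$\sigma(G,F)$ continuous, and that $T \in M(L^{\infty}(A),G)$. Thus the entire burden of the proof reduces to checking the last of these conditions, namely $T \in M(L^{\infty}(A),G)$, in each of the two listed cases.

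For case (i), I would argue directly via Fact~\ref{thm:all_bdd_op_reg}.(i): since $G$ is Dedekind complete and has a strong order unit, \emph{every} bounded linear operator from any Banach lattice into $G$ is regular. In particular, $T \in \mathcal{L}(L^{\infty}(A),G)$ is regular, hence $T \in M(L^{\infty}(A),G)$. For case (ii), I would instead exploit the adjoint relationship through Lemma~\ref{lemma:T_reg_if_S_is}. The hypothesis that the image of $g \mapsto \ip{\,\cdot\,}{g},\, G \longra F^{*}$ is a band in $F^{*}$ is exactly the extra assumption of that lemma, so it suffices to show that the adjoint $S \in \mathcal{L}(F,L^{1}(A))$ is regular. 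Here I would apply Fact~\ref{thm:all_bdd_op_reg}.(ii): the target $L^{1}(A)$ has a Levi norm (being a KB-space, as noted in the preliminaries), and $F$ is lattice isomorphic to an AL-space by hypothesis, so every bounded linear operator from $F$ to $L^{1}(A)$ is regular; in particular $S$ is regular. Lemma~\ref{lemma:T_reg_if_S_is} then transfers regularity from $S$ to $T$.

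Having established $T \in M(L^{\infty}(A),G)$ in both cases, the final sentence of the corollary is obtained by a direct appeal to Theorem~\ref{thm:main_result}: it furnishes the unique $\sigma(L^{\infty}(A;Y),L^{1}(A;X))$-to-$\sigma(G(Y),F\tilde{\otimes}X)$ continuous extension $T_{Y} \in \mathcal{L}(L^{\infty}(A;Y),G(Y))$, identifies it as the $Y$-valued extension of $T$ with respect to $\ip{X}{Y}$, gives that $S$ is regular with $S_{X}$ the unique bounded extension of $S \otimes I_{X}$, and supplies the norm estimates $\norm{S_{X}} \leq \norm{S}_{r}$ and $\norm{T_{Y}} \leq \norm{T}_{M(L^{\infty}(A),G)}$. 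I do not expect a genuine obstacle in this argument, since both cases reduce to already-stated facts; the only point requiring minor care is confirming in case (ii) that $L^{1}(A)$ indeed has a Levi norm so that Fact~\ref{thm:all_bdd_op_reg}.(ii) applies, which follows because $L^{1}(A)$ is a KB-space and KB-spaces have a Levi norm.
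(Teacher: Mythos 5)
Your proposal is correct and matches the paper's own proof: case (i) is handled by Fact~\ref{thm:all_bdd_op_reg}.(i), case (ii) by applying Fact~\ref{thm:all_bdd_op_reg}.(ii) to the adjoint $S:F \longra L^{1}(A)$ (using that $L^{1}(A)$ has a Levi norm) and transferring regularity back to $T$ via Lemma~\ref{lemma:T_reg_if_S_is}, after which everything follows from Theorem~\ref{thm:main_result}. Your write-up simply spells out the details the paper leaves implicit.
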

\begin{proof}
Case (i) is an immediate consequence of Fact \ref{thm:all_bdd_op_reg}, whereas case (ii) follows from a combination Fact \ref{thm:all_bdd_op_reg} and the above lemma.
\end{proof}

Examples of K\"othe dual pairs $\ip{F}{G}$ satisfying the hypothesis of this result are $\ip{F}{G} = \ip{L^{1}(B)}{L^{\infty}(B)}$ on a Maharam measure space $(B,\mathscr{B},\nu)$ and $\ip{F}{G} = \ip{L^{p}(B)}{L^{p'}(B)}$, $p,p \in [1,\infty]$, $\frac{1}{p}+\frac{1}{p'}$,
on a finite measure space $(B,\mathscr{B},\nu)$.

Finally, we give two situations (involving some extra assumptions on $\ip{G}{F}$) in which $T$ being regular is not only a sufficient condition but a necessary condition as well. The idea is to impose conditions on $\ip{G}{F}$ which allow us to obtain that $T$ is regular, either via an application of Fact \ref{thm:ext_reg_Levi} to $T$ or via an application of this theorem to $S$ in combination
with Lemma \ref{lemma:T_reg_if_S_is}.

\begin{prop}\label{prop:nec_reg}
Suppose, in addition to the assumptions of Theorem \ref{thm:main_result}, that either
\begin{itemize}
\item[(i)] $G$ has a Levi norm, or
\item[(ii)] the image of
$g \mapsto \ip{\,\cdot\,}{g}, G \longra F^{*}$ is a band in $F^{*}$.
\end{itemize}
Then $T$ must be regular if, for some dual pair of Banach spaces $\ip{X}{Y}$ with $Y = \ell^{1}$ in case (i) and $X = \ell^{1}$ in case (ii), $T \otimes I_{Y}$ has an extension to a $\sigma(L^{\infty}(A;Y),L^{1}(A;X))$-to-$\sigma(G(Y),F \tilde{\otimes} X)$ continuous linear operator $T_{Y} \in \mathcal{L}(L^{\infty}(A;Y),G(Y))$.
\end{prop}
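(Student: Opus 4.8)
The plan is to establish the regularity of $T$ in each of the two cases by reducing to an extension result that we can already invoke, exploiting the adjoint relationship between $T$ and $S$. The guiding principle is exactly the one indicated in the remark preceding the proposition: we want to recognize the hypothesized extension of $T \otimes I_{Y}$ as forcing regularity, either by directly applying Fact~\ref{thm:ext_reg_Levi} to $T$ itself, or by first transferring the extension to $S \otimes I_{X}$ via Lemma~\ref{eq:relation_ext_S_T} and then applying Fact~\ref{thm:ext_reg_Levi} to $S$, after which Lemma~\ref{lemma:T_reg_if_S_is} propagates regularity back from $S$ to $T$.

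In case (i), where $G$ has a Levi norm, I would take $Y = \ell^{1}$ (so that, with $X$ its predual in the chosen pairing, the pairing $\ip{X}{Y}$ features $\ell^{1}$ on the $Y$-side) and simply observe that the hypothesized $\sigma(L^{\infty}(A;Y),L^{1}(A;X))$-to-$\sigma(G(Y),F \tilde{\otimes} X)$ continuous extension $T_{Y}$ is in particular a bounded linear extension of $T \otimes I_{\ell^{1}}$ to an operator in $\mathcal{L}(L^{\infty}(A) \tilde{\otimes} \ell^{1}, G \tilde{\otimes} \ell^{1})$. Since $G$ has a Levi norm, Fact~\ref{thm:ext_reg_Levi} (the equivalence of (a) and (b) there, applied with $E = L^{\infty}(A)$) then yields immediately that $T$ is regular. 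The only point requiring a little care is confirming that restricting $T_{Y}$ to the closure $L^{\infty}(A) \tilde{\otimes} \ell^{1}$ genuinely produces a bounded extension of $T \otimes I_{\ell^{1}}$ into $G \tilde{\otimes} \ell^{1}$ rather than merely into $G(\ell^{1})$; this should follow because $T \otimes I_{\ell^{1}}$ maps $L^{\infty}(A) \otimes \ell^{1}$ into $G \otimes \ell^{1}$, whose closure is $G \tilde{\otimes} \ell^{1}$, combined with continuity.

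In case (ii), where the image of $g \mapsto \ip{\,\cdot\,}{g}$ is a band in $F^{*}$, I would instead take $X = \ell^{1}$ and run the argument through the adjoint $S$. Applying Lemma~\ref{eq:relation_ext_S_T} to the hypothesized extension $T_{Y}$, we obtain that $S \otimes I_{X} = S \otimes I_{\ell^{1}}$ extends to a bounded linear operator $S_{\ell^{1}} \in \mathcal{L}(F \tilde{\otimes} \ell^{1}, L^{1}(A) \tilde{\otimes} \ell^{1})$, where I use that $L^{1}(A)(\ell^{1}) = L^{1}(A) \tilde{\otimes} \ell^{1}$ since $L^{1}(A)$ has order continuous norm. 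Now the target space $L^{1}(A)$ has a Levi norm (it is a KB-space), so Fact~\ref{thm:ext_reg_Levi}, applied with $E = F$, $G = L^{1}(A)$, gives that $S$ is regular. Finally, the band hypothesis on the image of $G$ in $F^{*}$ is exactly what Lemma~\ref{lemma:T_reg_if_S_is} requires, so that lemma transfers regularity from $S$ back to $T$, completing this case.

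The step I expect to demand the most attention is the bookkeeping around the various tensor-extension target spaces and the directions of continuity, rather than any deep difficulty. Specifically, in case (ii) one must check that the $\sigma$-continuity hypothesis on $T_{Y}$ is precisely the one needed to invoke the equivalence in Lemma~\ref{eq:relation_ext_S_T} (so that the adjoint $S_{\ell^{1}}$ one extracts really is a bounded extension of $S \otimes I_{\ell^{1}}$ with the correct codomain $L^{1}(A) \tilde{\otimes} \ell^{1}$), and in both cases one must verify that the restriction to the tensor-closure lands in $\tilde{\otimes}$ rather than the full K\"othe--Bochner space. Once these identifications are pinned down, the regularity conclusions are direct applications of Fact~\ref{thm:ext_reg_Levi} and, in case (ii), Lemma~\ref{lemma:T_reg_if_S_is}.
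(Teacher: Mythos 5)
Your proposal is correct and follows essentially the same route as the paper: case (i) restricts the hypothesized extension to $L^{\infty}(A)\tilde{\otimes}\ell^{1}$ and applies Fact~\ref{thm:ext_reg_Levi} directly to $T$, while case (ii) passes to the adjoint via Lemma~\ref{eq:relation_ext_S_T}, applies Fact~\ref{thm:ext_reg_Levi} to $S$ using the Levi norm of $L^{1}(A)$, and transfers regularity back with Lemma~\ref{lemma:T_reg_if_S_is}. The extra bookkeeping you flag (that the restriction lands in $G\tilde{\otimes}\ell^{1}$, and the identification $L^{1}(A;\ell^{1})=L^{1}(A)\tilde{\otimes}\ell^{1}$) is handled correctly and matches what the paper implicitly uses.
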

\begin{proof}
Let us first consider case (i). Note that it, in particular, $T \otimes I_{\ell^{1}}$ has a unique extension to a bounded linear operator from $L^{\infty}(A) \tilde{\otimes} \ell^{1}$ to $G \tilde{\otimes} \ell^{1}$. Fact \ref{thm:ext_reg_Levi} now yields that $T$ is regular.

Next we consider case (ii).
In view Lemma~\ref{lemma:T_reg_if_S_is} it suffices to prove that $S$ is regular. By Fact~\ref{thm:ext_reg_Levi}
and the fact that $L^{1}(A)$ has a Levi norm, for this it is in turn enough to show that $S \otimes I_{\ell^{1}}$ has an extension to a bounded linear operator $S_{\ell^{1}} \in \mathcal{L}(F \tilde{\otimes} \ell^{1},L^{1}(A;\ell^{1}))$. But, by Lemma \ref{eq:relation_ext_S_T}, the adjoint of the $\sigma(L^{\infty}(A;Y),L^{1}(A;\ell^{1}))$-to-$\sigma(G(Y),F\tilde{\otimes}\ell^{1})$ continuous linear operator $T_{Y} \in \mathcal{L}(L^{\infty}(A;Y),G(Y))$ is an extension of $S \otimes I_{\ell^{1}}$.
\end{proof}

\begin{cor}
Suppose, in addition to the assumptions of Theorem \ref{thm:main_result}, that the image of $g \mapsto \ip{\,\cdot\,}{g}, G \longra F^{*}$ is a band in $F^{*}$. Then the following assertions are equivalent.
\begin{itemize}
\item[(a)] $T$ is regular.
\item[(b)] $S$ is regular.
\item[(c)] $T \otimes I_{\ell^{\infty}}$ has an extension to a bounded linear operator from $L^{\infty}(A) \tilde{\otimes} \ell^{\infty}$ to $G \tilde{\otimes} \ell^{\infty}$.
\item[(d)] $S \otimes I_{\ell^{1}}$ has an extension to a bounded linear operator from $F \tilde{\otimes}\ell^{1}$ to $L^{1}(S;\ell^{1})$.
\item[(e)] For any dual pair of Banach spaces $\ip{X}{Y}$, $T \otimes I_{Y}$ has an extension to an operator $T_{Y} \in \mathcal{L}(L^{\infty}(A;Y),G(Y))$ which is $\sigma(L^{\infty}(A;Y),L^{1}(A;X))$-to-$\sigma(G(Y),F \tilde{\otimes} X)$ continuous.
\item[(f)] For any dual pair of Banach spaces $\ip{X}{Y}$, $S \otimes I_{X}$ has an extension to an operator $S_{X} \in \mathcal{L}(F \tilde{\otimes} X,L^{1}(A;X))$ which is $\sigma(F \tilde{\otimes} X,F^{\times}(Y))$-to-$\sigma(L^{1}(A;X),L^{\infty}(A;Y))$ continuous.
\end{itemize}
In this situation, for which to occur it suffices that $G$ has a strong order unit, $S_{X}$ and $T_{Y}$ are adjoints of each other
with respect to the dualities $\ip{F \tilde{\otimes} X}{G(Y)}$ and $\ip{L^{1}(A;X)}{L^{\infty}(A;Y)}$.
Moreover, these extensions are of norm $\norm{S_{X}} \leq \norm{S}_{r}$ and $\norm{T_{Y}} \leq \norm{T}_{r}$.
\end{cor}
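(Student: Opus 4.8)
The plan is to prove the six statements equivalent via the cycle (a)$\,\Rightarrow\,$(c)$\,\Rightarrow\,$(d)$\,\Rightarrow\,$(b)$\,\Rightarrow\,$(a), and then to attach (e) and (f). First observe that, since the image of $i:G\longra F^{*}$ is a band, $G$ is Dedekind complete, so that $M(L^{\infty}(A),G)=\mathcal{L}_{r}(L^{\infty}(A),G)$ and ``$T$ regular'' is the same as $T\in M(L^{\infty}(A),G)$. With this, (a)$\,\Rightarrow\,$(c) is immediate from Fact~\ref{thm:ext_reg} applied with $X=\ell^{\infty}$. For (d)$\,\Rightarrow\,$(b) I would apply Fact~\ref{thm:ext_reg_Levi} to the adjoint $S:F\longra L^{1}(A)$, whose codomain $L^{1}(A)$ has a Levi norm: the bounded extension of $S\otimes I_{\ell^{1}}$ to $F\tilde\otimes\ell^{1}\longra L^{1}(A;\ell^{1})=L^{1}(A)\tilde\otimes\ell^{1}$ forces $S$ to be regular. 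Finally (b)$\,\Rightarrow\,$(a) is exactly Lemma~\ref{lemma:T_reg_if_S_is}, which is where the band hypothesis enters.

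The crux is (c)$\,\Rightarrow\,$(d). Here $G$ need not have a Levi norm, so Fact~\ref{thm:ext_reg_Levi} cannot be applied to $T$ directly; instead I transfer the estimate to $S$, whose codomain does have a Levi norm. Testing $T\otimes I_{\ell^{\infty}}$ on the element $\sum_{k}e_{k}\otimes\delta_{k}\in L^{\infty}(A)\otimes\ell^{\infty}$ attached to a finite family $e_{1},\dots,e_{n}\in L^{\infty}(A)$ yields the sup-estimate $\norm{\sup_{k}|Te_{k}|}_{G}\leq C\norm{\sup_{k}|e_{k}|}_{\infty}$, whereas boundedness of $S\otimes I_{\ell^{1}}$ is the sum-estimate $\norm{\sum_{k}|Sf_{k}|}_{1}\leq C\norm{\sum_{k}|f_{k}|}_{F}$. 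To pass from the former to the latter, given $f_{1},\dots,f_{n}\in F$ I would set $\epsilon_{k}:=\mathrm{sgn}(Sf_{k})\in L^{\infty}(A)$ (so $|\epsilon_{k}|\leq 1$) and use the adjoint relation $\int_{A}e\,Sf\,d\mu=\int_{B}(Te)f\,d\nu$ to compute
\begin{align*}
\norm{\textstyle\sum_{k}|Sf_{k}|}_{1}
&=\sum_{k}\int_{A}\epsilon_{k}\,Sf_{k}\,d\mu
=\int_{B}\sum_{k}(T\epsilon_{k})f_{k}\,d\nu\\
&\leq\int_{B}\big(\sup_{k}|T\epsilon_{k}|\big)\big(\textstyle\sum_{k}|f_{k}|\big)\,d\nu
\leq\norm{\sup_{k}|T\epsilon_{k}|}_{G}\,\norm{\textstyle\sum_{k}|f_{k}|}_{F}.
\end{align*}
Since $\sup_{k}|\epsilon_{k}|\leq 1$, the sup-estimate from (c) bounds $\norm{\sup_{k}|T\epsilon_{k}|}_{G}$ by $C$, which is (d). The points to verify are that $\sup_{k}|T\epsilon_{k}|$ lies in $G$ (a finite lattice supremum in the Dedekind complete $G$) and that the K\"othe pairing is bounded, $|\ip{g}{f}|\leq \norm{g}_{G}\norm{f}_{F}$; any bound suffices here, since only the \emph{boundedness} in (d) is needed, the sharp norms being supplied separately.

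Finally I would attach (e) and (f). The implications (a)$\,\Rightarrow\,$(e) and (a)$\,\Rightarrow\,$(f) are precisely the content of Theorem~\ref{thm:main_result}: for every dual pair $\ip{X}{Y}$ it produces $T_{Y}$ with the stated $\sigma$-continuity together with the adjoint $S_{X}$ extending $S\otimes I_{X}$, and it also gives the adjoint relation and the norm bounds $\norm{S_{X}}\leq\norm{S}_{r}$, $\norm{T_{Y}}\leq\norm{T}_{r}$ of the concluding ``moreover''. For (e)$\,\Rightarrow\,$(a) I would specialize (e) to a dual pair with $X=\ell^{1}$ (for instance $\ip{\ell^{1}}{\ell^{\infty}}$) and invoke Proposition~\ref{prop:nec_reg}(ii), whose band hypothesis is in force; for (f)$\,\Rightarrow\,$(d) I would likewise take $X=\ell^{1}$ and read off that $S\otimes I_{\ell^{1}}$ admits a bounded extension, which is exactly (d). The remaining supplementary claim, that $G$ having a strong order unit suffices for the equivalent conditions to hold, follows from Fact~\ref{thm:all_bdd_op_reg}(i) (with $G$ Dedekind complete), which makes every $T\in\mathcal{L}(L^{\infty}(A),G)$ regular and hence forces (a). I expect the single nontrivial step to be (c)$\,\Rightarrow\,$(d), i.e.\ the sup-to-sum transfer across the adjoint; all other implications are direct applications of the cited facts, lemma, proposition, and main theorem.
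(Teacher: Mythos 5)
Your proposal is correct and its global architecture is the same as the paper's: the cycle (a)$\Rightarrow$(c)$\Rightarrow$(d)$\Rightarrow$(b)$\Rightarrow$(a) with (a)$\Rightarrow$(c) from Fact~\ref{thm:ext_reg}, (d)$\Rightarrow$(b) from Fact~\ref{thm:ext_reg_Levi} and the Levi norm of $L^{1}(A)$, (b)$\Rightarrow$(a) from Lemma~\ref{lemma:T_reg_if_S_is}, and (e), (f) attached via Theorem~\ref{thm:main_result} and Proposition~\ref{prop:nec_reg} (the paper handles (e)$\Leftrightarrow$(f) in one stroke via Lemma~\ref{eq:relation_ext_S_T}, whereas you close the loop through (a)$\Rightarrow$(f)$\Rightarrow$(d); both work). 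The genuine divergence is in (c)$\Rightarrow$(d). The paper argues abstractly: it embeds $L^{1}(A;\ell^{1})$ isometrically into $(L^{\infty}(A)\tilde{\otimes}\ell^{\infty})^{*}$, takes the Banach space adjoint $U$ of the bounded extension of $T\otimes I_{\ell^{\infty}}$, and observes that $U$ composed with the natural inclusion $F\tilde{\otimes}\ell^{1}\hookrightarrow(G\tilde{\otimes}\ell^{\infty})^{*}$ extends $S\otimes I_{\ell^{1}}$. You instead run a concrete testing argument: extract the sup-estimate $\norm{\sup_{k}|Te_{k}|}_{G}\leq C\norm{\sup_{k}|e_{k}|}_{\infty}$ from (c), and transfer it to the sum-estimate for $S$ using the sign functions $\epsilon_{k}=\mathrm{sgn}(Sf_{k})$ (conjugate signs in the complex case) and the adjoint identity. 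This is valid and arguably more transparent, at the price of two small bookkeeping points you should make explicit: the Köthe pairing bound $\int_{B}|gf|\,d\nu\leq M\norm{g}_{G}\norm{f}_{F}$ only holds up to the constant $M$ of the bounded bilinear form, and, strictly speaking, your sum-estimate is the restriction of the boundedness of $S\otimes I_{\ell^{1}}$ to tensors $\sum_{k}f_{k}\otimes\delta_{k}$, so it yields (d) only after passing through the Buhvalov criterion quoted before Fact~\ref{thm:ext_reg_Levi} (giving regularity of $S$, hence (b), hence (d) by Fact~\ref{thm:ext_reg}); since your chain establishes (b) anyway, the equivalence still closes. Also, a finite lattice supremum $\sup_{k}|T\epsilon_{k}|$ exists in any Riesz space, so Dedekind completeness of $G$ is not actually needed at that point. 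What the paper's route buys is uniformity (no sign trick, no constant $M$, and it works verbatim in the complex case); what yours buys is an explicit quantitative link between the sup- and sum-estimates.
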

\begin{proof}
Note that $G$ must be Dedekind complete, being lattice isomorphic to a band in the Dedekind complete $F^{*}$.

"(a)$\ra$(c)": See Fact \ref{thm:ext_reg}.

"(c)$\ra$(d)": Viewing $L^{1}(A;\ell^{1})$ as a closed linear subspace of $(L^{\infty} \tilde{\otimes} \ell^{\infty})^{*}$ via the isometric embedding \[
i:L^{1}(A;\ell^{1}) \longra (L^{\infty}(A) \tilde{\otimes} \ell^{\infty})^{*}, h \mapsto \ip{h}{\,\cdot\,}_{\ip{L^{1}(A;\ell^{1})}{L^{\infty}(A;\ell^{\infty})}}\big|_{L^{\infty}(A) \tilde{\otimes} \ell^{\infty}},
\]
it is enough that $S \otimes I_{\ell^{1}}$ has an extension to a bounded linear operator from $F \tilde{\otimes} \ell^{1}$ to
$(L^{\infty} \tilde{\otimes} \ell^{\infty})^{*}$. For this let
\[
j:F \tilde{\otimes} \ell^{1} \longra (G \tilde{\otimes} \ell^{\infty})^{*},f \mapsto \ip{f}{\,\cdot\,}_{\ip{F(\ell^{1})}{G(\ell^{\infty})}}\big|_{G \tilde{\otimes} \ell^{\infty}}
\]
be the natural continuous inclusion and let $U \in \mathcal{L}((G \tilde{\otimes} \ell^{\infty})^{*},(L^{\infty}(A) \tilde{\otimes} \ell^{\infty})^{*})$ be the Banach spaced adjoint of the bounded extension of $T \otimes I_{\ell^{\infty}}$.
Now observe that $U \circ j$ extends $S \otimes I_{\ell^{1}}$.

"(d)$\ra$(b)": This follows from Fact \ref{thm:ext_reg_Levi} and the fact that $L^{1}(A)$ has a Levi norm.

"(b)$\ra$(a)": This is precisely Lemma \ref{lemma:T_reg_if_S_is}.

"(a)$\lra$(e)": Combine Theorem \ref{thm:main_result} with the above proposition.

"(e)$\lra$(f)": See Lemma \ref{eq:relation_ext_S_T}.

The final assertion follows from Theorem \ref{thm:main_result} and Corollary \ref{cor:cor_main_result}.
\end{proof}

\subsection{Extensions with respect to $\ip{X}{Y} = \ip{H^{*}}{H}$ for a Hilbert space $H$}\label{subsec:hilbert_pairs}

Similar to Fact \ref{thm:ext_Hilbert}, for the existence of the extension in Theorem \ref{thm:main_result2_extension_reflexive} we do not need to impose any conditions on $T$ under the extra assumption that $G$ has a sequentially Levi norm.

\begin{prop}\label{prop:ext_H-valued}
Let $E$ and $G$ be two Banach function spaces, $T \in \mathcal{L}(E,G)$, and $H$ a Hilbert space.
Suppose that $G$ has a sequentially Levi norm. Then $T$ has a $H$-valued extension $T_{H} \in \mathcal{L}(E(H),G(H))$
with respect to $\ip{H^{*}}{H}$ (in the sense of \eqref{eq:the_extension_problem_relation}) which is of norm $\norm{T_{Y}} \leq K_{G}\norm{T}$, where $K_{G}$ is the Grothendieck constant.
\end{prop}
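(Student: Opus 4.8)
The plan is to adapt the proof of Theorem \ref{thm:main_result2_extension_reflexive} (via Lemma \ref{lemma:def_extension_via_boundedly-complete_basis}), replacing the domination estimate \eqref{eq:X-norm_estimate_tensor_extension_regular_operator} — unavailable here since $T$ is an \emph{arbitrary} bounded operator — by the Grothendieck/square-function estimate of Fact \ref{thm:ext_Hilbert}, and using the sequentially Levi property of $G$ in place of bounded completeness of a basis to pass to the limit. Since every $e \in E(H)$ is essentially separably valued, I first reduce to the case that $H$ is separable, arguing as in Lemma \ref{lemma:thm;main_result2_extension_reflexive_Reduction}.(i), with uniqueness guaranteed by the norming pairing $\ip{H^{*}}{H}$. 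I then fix an orthonormal basis $\{b_{n}\}_{n \in \N}$ of $H$ with biorthogonal functionals $\{b_{n}^{*}\}_{n \in \N} \subset H^{*}$; the formula \eqref{eq:formula_via_basis} of Lemma \ref{lemma:if_ext_exists_then}.(i) then dictates what $T_{H}e$ must be.

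Given $e \in E(H)$, the coordinates $g_{n} := \ip{b_{n}^{*}}{e}$ lie in $E$ because $|g_{n}| \leq \norm{e}_{H} \in E$ and $E$ is an ideal of $L^{0}(A)$. The truncations $P_{N}e := \sum_{n=0}^{N} g_{n} \otimes b_{n}$ are genuine elements of $E \otimes H \subset E \tilde{\otimes} H$ with $\norm{P_{N}e}_{H} \leq \norm{e}_{H}$ pointwise. Applying the bounded tensor extension of Fact \ref{thm:ext_Hilbert} to these truncations, I set $v_{N} := \sum_{n=0}^{N} (Tg_{n}) \otimes b_{n} \in G \otimes H$, so that orthonormality gives the pointwise identity $u_{N} := \norm{v_{N}}_{H} = \bigl(\sum_{n=0}^{N} |Tg_{n}|^{2}\bigr)^{1/2}$, an increasing sequence in $G^{+}$ with $\norm{u_{N}}_{G} = \norm{v_{N}}_{G(H)} \leq K_{G}\norm{T}\,\norm{P_{N}e}_{E(H)} \leq K_{G}\norm{T}\,\norm{e}_{E(H)}$.

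Now the sequentially Levi property enters: the increasing, norm-bounded sequence $(u_{N})_{N} \subset G^{+}$ has a supremum $u \in G$, and since $G$ is an ideal of $L^{0}(B)$ this supremum coincides with the a.e. pointwise limit of $(u_{N})$, hence is finite a.e. Therefore for a.e. $b$ the series $\sum_{n} (Tg_{n})(b)\,b_{n}$ converges in $H$, and I define $T_{H}e$ as this pointwise limit; it is strongly measurable (being the a.e. limit of the $v_{N}$) and satisfies $\norm{T_{H}e}_{H} = u \in G$, so $T_{H}e \in G(H)$. By construction $\ip{b_{n}^{*}}{T_{H}e} = Tg_{n} = T\ip{b_{n}^{*}}{e}$ for every $n$; a short Cauchy--Schwarz argument (showing that for $x \in H^{*}$ with coordinates $(x_{n})$ the series $\sum_{n} x_{n}g_{n}$ converges in $E$ to $\ip{x}{e}$, so that $T$ may be applied termwise) upgrades this to the full relation \eqref{eq:the_extension_problem_relation} for all $x \in H^{*}$. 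Linearity, boundedness, and uniqueness of $T_{H}$ then follow from Remark \ref{rmk:eq;the_extension_problem_relation_mapping}.

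The main obstacle is the norm bound $\norm{T_{H}e}_{G(H)} = \norm{u}_{G} \leq K_{G}\norm{T}\,\norm{e}_{E(H)}$: one must transfer the uniform estimate on $\norm{u_{N}}_{G}$ to the order limit $u = \sup_{N} u_{N}$, i.e. establish the Fatou-type inequality $\norm{\sup_{N} u_{N}}_{G} \leq \sup_{N}\norm{u_{N}}_{G}$. The sequentially Levi property yields only the existence of the supremum, not this comparison, so this is precisely where the finer order structure of $G$ must be used. I would obtain the inequality from the (sequential) order lower semicontinuity of $\norm{\,\cdot\,}_{G}$ — that is, by testing $u$ against the positive order-continuous functionals in $G^{*}$ — which is the genuinely delicate step; note that it does hold for the spaces of interest, such as $L^{\infty}$ on a Maharam space and the KB-spaces, where the norm of an increasing limit is controlled by the suprema of the norms of its truncations.
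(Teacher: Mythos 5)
Your construction is essentially the paper's: reduce to separable $H$ as in Lemma \ref{lemma:thm;main_result2_extension_reflexive_Reduction}.(i), expand along an orthonormal basis, control the increasing partial sums $u_{N}=\bigl(\sum_{n=0}^{N}|T\ip{h_{n}}{e}|^{2}\bigr)^{1/2}$ by Krivine's square-function form of the Grothendieck inequality, and use the sequentially Levi property of $G$ to get a.e.\ convergence of $\sum_{n}T\ip{h_{n}}{e}\otimes h_{n}$ and membership of the limit in $G(H)$. Your Cauchy--Schwarz step for upgrading the identity $\ip{b_{n}^{*}}{T_{H}e}=T\ip{b_{n}^{*}}{e}$ to all of $H^{*}$ (via norm convergence of $\sum_{n}x_{n}g_{n}$ in $E$, using the $\ell^{2}$-decay of the coordinates of $x$) is correct and is a detail the paper leaves implicit.

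Where you part company with the paper is the norm estimate, and here you have put your finger on a step the published proof elides: it simply asserts that the bound follows from the Levi property together with the square-function estimate, which tacitly uses exactly the Fatou-type inequality $\norm{\sup_{N}u_{N}}_{G}\leq\sup_{N}\norm{u_{N}}_{G}$ that you isolate; you are right that the sequentially Levi property alone only produces the supremum. However, your proposed remedy goes the wrong way: testing $u=\sup_{N}u_{N}$ against the positive order-continuous functionals on $G$ controls the K\"othe-bidual norm, and for $u\geq 0$ one has $\sup\{\int ug : g\in G^{\times},\,\norm{g}_{G^{\times}}\leq 1\}\leq\norm{u}_{G}$, with equality only under a Fatou-type hypothesis --- so this yields a lower bound for $\norm{u}_{G}$, not the upper bound you need. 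What one does get in general is Amemiya's theorem: a Banach function space with the (sequential) weak Fatou property admits a finite constant $k\geq 1$ such that $\norm{\sup_{N}u_{N}}\leq k\sup_{N}\norm{u_{N}}$ for increasing sequences, giving $\norm{T_{H}}\leq kK_{G}\norm{T}$; the clean constant $K_{G}$ as stated requires $G$ to have a (sequentially) Fatou norm, as is the case for $L^{p}$, for $L^{\infty}$ on a Maharam space, and for KB-spaces. The existence, linearity, boundedness and uniqueness of $T_{H}$ are unaffected by this issue, by Remark \ref{rmk:eq;the_extension_problem_relation_mapping}.
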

\begin{proof}
We may without loss of generality assume that $H$ is separable, see Lemma \ref{lemma:thm;main_result2_extension_reflexive_Reduction}.(i).
Now choose an orthonormal basis $\{h_{n}\}_{n
\in \N}$ of $H$. Given an $e \in E(H)$, it suffices to show that $\sum_{n \in \N}T\ip{h_{n}}{e} \otimes h_{n}$ converges pointwise a.e. in $H$ to an element of norm $\leq K_{G}\norm{T}\norm{e}_{E(H)}$.  But this follows from the hypothesis that $G$ has a sequentially Levi norm in combination with the estimate
\[
\norm{\left( \sum_{n=0}^{N}|T\ip{h_{n}}{e}|^{2} \right)^{1/2}}_{G}
\leq K_{G}\norm{T}\norm{\left( \sum_{n=0}^{N}|\ip{h_{n}}{e}|^{2} \right)^{1/2}}_{E} \leq K_{G}\norm{T}\norm{e}_{E(H)};
\]
here we use the Grothendieck inequality for Banach lattices (see \cite[pg.~82]{class_bs_II}).
\end{proof}

As an immediate consequence of this proposition, Fact \ref{thm:ext_Hilbert}, and Lemma \ref{lemma:if_ext_exists_then}.(ii), we have something similar for Theorem \ref{thm:main_result}:

\begin{cor}\label{cor:ext_H-valued}
Let $\ip{D}{E}$ and $\ip{F}{G}$ be two K\"othe dual pairs,
let $T \in \mathcal{L}(E,G)$ be a $\sigma(E,D)$-to-$\sigma(G,F)$ continuous linear operator  with adjoint $S \in \mathcal{L}(F,L^{1}(A))$,
and let $H$ be a Hilbert space. Then it holds that $T \otimes I_{H}$ has a unique extension to a $\sigma(E(H),D \tilde{\otimes}H^{*})$-to-$\sigma(G(H),F \tilde{\otimes} H^{*})$ continuous linear operator $T_{H} \in \mathcal{L}(E(Y),G(H))$.
In this situation, $T_{H}$ is the $H$-valued extensions of $T$ with respect to $\ip{H^{*}}{H}$ and the adjoint $S_{H} \in \mathcal{L}(F \tilde{\otimes} H^{*},F \tilde{\otimes} H^{*})$ of $T_{H}$ is the unique bounded extension of $S \otimes I_{H^{*}}$.
Moreover, these extensions are of norm $\norm{S_{H^{*}}} \leq K_{G}\norm{S}$ and $\norm{T_{H}} \leq K_{G}\norm{T}$.
\end{cor}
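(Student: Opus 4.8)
The plan is to build the two one-sided extensions independently and then glue them together through Lemma~\ref{lemma:if_ext_exists_then}.(ii), exploiting that $H^{*}$ is again a Hilbert space and that the $H$-valued extension of $T$ is forced to be the adjoint of the $H^{*}$-valued extension of $S$ under the pairing $\ip{H^{*}}{H}$. Concretely, first I would apply Fact~\ref{thm:ext_Hilbert} to the adjoint operator $S$ and the Hilbert space $H^{*}$; this produces a bounded extension $S_{H^{*}} \in \mathcal{L}(F \tilde{\otimes} H^{*}, D \tilde{\otimes} H^{*})$ of $S \otimes I_{H^{*}}$ with $\norm{S_{H^{*}}} \leq K_{G}\norm{S}$. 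On the other side I would apply Proposition~\ref{prop:ext_H-valued} to $T$ and the Hilbert space $H$ to obtain the $H$-valued extension $T_{H} \in \mathcal{L}(E(H), G(H))$ of $T$ with respect to $\ip{H^{*}}{H}$ in the sense of \eqref{eq:the_extension_problem_relation}, of norm $\norm{T_{H}} \leq K_{G}\norm{T}$, using that $G$ carries a sequentially Levi norm. Evaluating \eqref{eq:the_extension_problem_relation} on a simple tensor $e \otimes h$ against $\xi \in H^{*}$ gives $\ip{\xi}{T_{H}(e \otimes h)} = T\ip{\xi}{e \otimes h} = \ip{\xi}{h}\,Te$, so that $T_{H}$ genuinely extends $T \otimes I_{H}$.

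With $T_{H}$ now known to exist and $S \otimes I_{H^{*}}$ known to extend to the bounded operator $S_{H^{*}}$, I would invoke Lemma~\ref{lemma:if_ext_exists_then}.(ii) with $\ip{X}{Y} = \ip{H^{*}}{H}$: since $T$ is $\sigma(E,D)$-to-$\sigma(G,F)$ continuous with adjoint $S$, the lemma yields at once that $T_{H}$ is $\sigma(E(H), D \tilde{\otimes} H^{*})$-to-$\sigma(G(H), F \tilde{\otimes} H^{*})$ continuous and that its adjoint is precisely $S_{H^{*}}$. Uniqueness of such a weakly continuous extension follows because $E \otimes H$ separates the points of $D \tilde{\otimes} H^{*}$ and is therefore $\sigma(E(H), D \tilde{\otimes} H^{*})$-dense in $E(H)$, so that any two weakly continuous operators agreeing with $T \otimes I_{H}$ on $E \otimes H$ must coincide. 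The two displayed norm estimates are exactly those produced in the previous paragraph.

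The one substantial step is the existence of $T_{H}$ as a genuine element of the K\"othe--Bochner space $G(H)$. Here there is no domination hypothesis on $T$, so one cannot expand $T_{H}$ along a Schauder basis as in Lemma~\ref{lemma:def_extension_via_boundedly-complete_basis}, and one must instead route through the Grothendieck square-function inequality built into Proposition~\ref{prop:ext_H-valued}. Accordingly, the crux is to guarantee that the partial sums $\sum_{n=0}^{N}T\ip{h_{n}}{e} \otimes h_{n}$, whose $G$-norms are controlled by the Grothendieck bound, actually converge back into $G(H)$; this is exactly what the sequentially Levi property of $G$ supplies, and I expect verifying that property in the present setting to be the only point requiring care. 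Everything downstream of it---matching $H$ against $H^{*}$ in the pairing, identifying the adjoint, and checking uniqueness---is formal and is delivered by Lemma~\ref{lemma:if_ext_exists_then}.(ii).
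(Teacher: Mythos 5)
Your proof follows exactly the paper's route: the corollary is stated there as an immediate consequence of Proposition~\ref{prop:ext_H-valued} (supplying $T_{H}$ with $\norm{T_{H}} \leq K_{G}\norm{T}$), Fact~\ref{thm:ext_Hilbert} (supplying the bounded extension $S_{H^{*}}$ of $S \otimes I_{H^{*}}$), and Lemma~\ref{lemma:if_ext_exists_then}.(ii) (identifying $T_{H}$ as the weakly continuous operator with adjoint $S_{H^{*}}$), which is precisely your assembly, down to the uniqueness argument via weak density of $E \otimes H$. You are also right to single out the sequentially Levi norm on $G$ as the hypothesis needed to invoke Proposition~\ref{prop:ext_H-valued}; the paper leaves that point equally implicit, so your proposal matches its proof including this caveat.
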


\section{Proof of Theorem \ref{thm:main_result}}\label{sec:proof}

Let the notations and assumptions be as in Theorem \ref{thm:main_result}. For the proof of this theorem we need three lemmas.
Before we can state the first lemma, we have to define the notion of countable step function:
a function $f:A \longra Y$ is called a \emph{countable step function} if it is measurable and only assumes countably many values.
Note that such a function is strongly measurable and can (in fact) be written as the poinwise limit
$f=\sum_{k=0}^{\infty}1_{A_{k}}y_{k}$ with $(A_{k})_{k \in \N}$ a mutually disjoint sequence in $\mathscr{A}$ and $(y_{n})_{n \in \N}$
a sequence in $Y$.

\begin{lemma}\label{lemma:ct_step_dense}
The subspace of countable step functions lying in $L^{\infty}(A;Y)$ is dense in $L^{\infty}(A;Y)$.
\end{lemma}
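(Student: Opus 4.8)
The plan is to show that an arbitrary $f \in L^{\infty}(A;Y)$ can be approximated in the $L^{\infty}(A;Y)$-norm by countable step functions, using the strong measurability of $f$ together with the separability of its essential range. First I would recall that, by definition, $f$ is (a.e.\ equal to) a strongly measurable function, hence by the Pettis measurability theorem it is essentially separably valued: there is a $\mu$-null set $N$ and a separable closed subspace $Y_{0} \subseteq Y$ with $f(a) \in Y_{0}$ for all $a \in A \setminus N$. Moreover, since $f \in L^{\infty}(A;Y)$ we have $\norm{f}_{Y} \in L^{\infty}(A)$, so after modifying $f$ on a further null set we may assume $\norm{f(a)}_{Y} \leq \norm{f}_{L^{\infty}(A;Y)} =: M$ for all $a$ outside a null set.

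Next I would fix $\varepsilon > 0$ and exploit the separability of $Y_{0}$. The closed ball $B_{Y_{0}}(0,M)$ (or simply $Y_{0}$ itself) is separable, so I can choose a countable subset $\{y_{k}\}_{k \in \N}$ that is $\varepsilon$-dense in the essential range $f(A \setminus N) \subseteq Y_{0}$. The idea is then to build the Borel sets $C_{k} := \{ a : \norm{f(a) - y_{k}}_{Y} < \varepsilon \}$ and disjointify them in the usual way by setting $A_{0} := C_{0}$ and $A_{k} := C_{k} \setminus \bigcup_{j<k} C_{j}$; each $A_{k} \in \mathscr{A}$ because $a \mapsto \norm{f(a) - y_{k}}_{Y}$ is measurable (the function $f$ being strongly measurable, hence so is $f - y_{k}$, and the norm of a strongly measurable function is measurable). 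By $\varepsilon$-density the $A_{k}$ cover $A \setminus N$, so $g := \sum_{k=0}^{\infty} 1_{A_{k}} y_{k}$ is a well-defined countable step function. By construction $\norm{f(a) - g(a)}_{Y} < \varepsilon$ for a.e.\ $a$, hence $\norm{f - g}_{L^{\infty}(A;Y)} \leq \varepsilon$. This establishes the claimed density.

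I do not expect any genuine obstacle here; the argument is the standard $\varepsilon$-net construction that underlies the Pettis measurability theorem, and the only points requiring slight care are measurability bookkeeping. The mild subtlety is that a countable step function as defined in the excerpt is required to be a genuine measurable function assuming countably many values written as $\sum_{k} 1_{A_{k}} y_{k}$ with $(A_{k})$ disjoint; the disjointification step above produces exactly this form, and the pointwise convergence of the series is automatic since at each $a$ at most one summand is nonzero. One should note that the approximant $g$ is automatically in $L^{\infty}(A;Y)$ because $\norm{g}_{Y} \leq M + \varepsilon$ a.e., so it genuinely lies in the subspace whose density we are asserting. The essential-separability and essential-boundedness reductions at the start are what make the countable $\varepsilon$-net available, and everything else is routine.
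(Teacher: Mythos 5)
Your argument is correct; the paper itself gives no proof of this lemma beyond citing Proposition~1.9 of \cite{vakhania}, and the $\varepsilon$-net construction you describe (essential separable-valuedness via Pettis, a countable $\varepsilon$-dense set in the essential range, disjointification of the preimage balls) is precisely the standard argument found there. The measurability and boundedness bookkeeping you flag is handled correctly, so there is nothing to add.
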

\begin{proof}
See the proof of Proposition 1.9 in \cite{vakhania}.
\end{proof}

\begin{lemma}\label{lemma:pt_wk_limit}
Let $\ip{D}{E}$ be a K\"othe dual pair of Banach function spaces on a measure space $(C,\mathscr{S},\rho)$, suppose that $(e_{k})_{k \in \N} \subset E$ is such that $\sum_{k=0}^{\infty}|e_{k}|$ is in $E$, and let $(y_{k})_{k \in \N}$ be a bounded sequence in $Y$. Then $\sum_{k=0}^{\infty}e_{k}(c)y_{k}$ converges in $Y$ for a.a. $c \in C$ and the resulting function $e:C \to Y$, defined by
$e(c):= \sum_{k=0}^{\infty}e_{k}(c)y_{k}$, belongs to $E(Y)$ and is of norm $\norm{e} \leq \norm{(y_{k})}_{\infty}\norm{\sum_{k=0}^{\infty}|e_{k}|}$. Moreover, we have $e = \sum_{k=0}^{\infty}e_{k} \otimes y_{k}$ with convergence in the $\sigma(E(Y),D(X))$-topology.
\end{lemma}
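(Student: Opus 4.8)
The plan is to prove the three assertions in turn, extracting the pointwise and norm statements from the absolute convergence of the series together with the solidity of the lattice norm on $E$, and reserving a dominated-convergence argument in $L^{1}(C)$ for the final weak convergence.

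First I would settle the a.a.\ convergence. Since $\sum_{k=0}^{\infty}|e_{k}|$ lies in $E \subset L^{0}(C)$ it is finite $\rho$-a.e., so $\sum_{k=0}^{\infty}|e_{k}(c)| < \infty$ for a.a.\ $c$; for such $c$,
\[
\sum_{k=0}^{\infty}|e_{k}(c)|\,\norm{y_{k}}_{Y} \leq \norm{(y_{k})}_{\infty}\sum_{k=0}^{\infty}|e_{k}(c)| < \infty,
\]
so $\sum_{k}e_{k}(c)y_{k}$ converges absolutely, hence in norm, in the Banach space $Y$, thereby defining $e(c)$. Being the pointwise a.a.\ limit of the strongly measurable partial sums $\sum_{k=0}^{N}e_{k}\otimes y_{k}$, the function $e$ is strongly measurable by the Pettis measurability theorem.

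For the membership $e \in E(Y)$ and the norm bound I would take $Y$-norms pointwise to obtain $\norm{e(c)}_{Y} \leq \norm{(y_{k})}_{\infty}\sum_{k=0}^{\infty}|e_{k}(c)|$ a.e.; since the right-hand side is in $E$ and the norm of $E$ is a lattice (solid) norm, this forces $\norm{e}_{Y} \in E$ with $\norm{e} = \norm{\,\norm{e}_{Y}\,}_{E} \leq \norm{(y_{k})}_{\infty}\,\norm{\sum_{k=0}^{\infty}|e_{k}|}$, which is exactly the claimed estimate, so indeed $e \in E(Y)$.

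The remaining, and main, step is the $\sigma(E(Y),D(X))$-convergence. Writing $s_{N} := \sum_{k=0}^{N}e_{k}\otimes y_{k}$ and fixing $d \in D(X)$, I would express $\ip{e - s_{N}}{d}$ as the integral over $C$ of $\ip{\sum_{k>N}e_{k}(c)y_{k}}{d(c)}$ and bound the integrand in absolute value by $\norm{(y_{k})}_{\infty}\,\norm{d(c)}_{X}\sum_{k>N}|e_{k}(c)|$. The crucial observation is that the $N$-independent majorant $\norm{(y_{k})}_{\infty}\,\norm{d}_{X}\,\sum_{k}|e_{k}|$ is a product of $\norm{d}_{X} \in D$ with $\sum_{k}|e_{k}| \in E$, hence lies in $L^{1}(C)$ precisely because $\ip{D}{E}$ is a K\"othe dual pair (so $D \cdot E \subset L^{1}(C)$); as the tails $\sum_{k>N}|e_{k}|$ decrease to $0$ a.e., dominated convergence yields $\ip{e-s_{N}}{d} \to 0$, and arbitrariness of $d$ gives the claim. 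I expect this last step to be the only genuine obstacle: the difficulty lies not in any single estimate but in exhibiting one integrable dominant valid for all partial sums simultaneously, which is exactly where the K\"othe duality $D \cdot E \subset L^{1}(C)$ is indispensable; the first two parts are routine consequences of absolute convergence and solidity.
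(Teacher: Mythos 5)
Your proposal is correct and follows essentially the same route as the paper's proof: pointwise absolute convergence from $\sum_{k}|e_{k}|\in E\subset L^{0}(C)$, the solid-norm argument for membership in $E(Y)$ and the norm bound, and dominated convergence in $L^{1}(C)$ with the majorant $\norm{(y_{k})}_{\infty}\,\norm{d}_{X}\sum_{k}|e_{k}|$, integrable because $D\cdot E\subset L^{1}(C)$. Your explicit remark on strong measurability of $e$ as an a.e.\ pointwise limit of the partial sums is a small detail the paper leaves implicit.
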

\begin{proof}
Observing that, for a.a.\ $c \in C$,
\[
\sum_{k=0}^{\infty}\norm{e_{k}(c)y_{k}} \leq \norm{(y_{k})}_{\infty} \sum_{k=0}^{\infty}|e_{k}(c)|,
\]
we find that, for a.a. $c \in C$ , $e(c) = \sum_{k=0}^{\infty}e_{k}(c)y_{k}$ converges in $Y$ and $\norm{e(c)} \leq \norm{(y_{k})}_{\infty} \sum_{k=0}^{\infty}|e_{k}(c)|$.
Therefore, $e \in E(Y)$ with $\norm{e} \leq \norm{(y_{k})}_{\infty}\norm{\sum_{k=0}^{\infty}|e_{k}|}$.

To prove the final assertion, fix an $d \in D(X)$.
The sequence $L^{1}(C)$-sequence $(c \mapsto \ip{d(c)}{\sum_{k=N+1}^{\infty}e_{k}(c)y_{k}})_{N \in \N}$ converges a.e.\ to $0$ as $N \to \infty$ and is dominated by a scalar multiple of $\norm{d}_{X}\norm{\sum_{k=0}^{\infty}|e_{k}|}\norm{(y_{k})}_{\infty} \in L^{1}(C)$, so that
\[
\ip{d}{e-\sum_{k=0}^{N}e_{k} \otimes y_{k}} = \int_{C}\ip{d(c)}{\sum_{k=N+1}^{\infty}e_{k}(c)y_{k}}\,d\rho(c)
\stackrel{N \to \infty}{\longra} 0. \qedhere
\]
\end{proof}

\begin{lemma}\label{lemma:L1_band}
Viewing $L^{1}(A)$ as closed Riesz subspace of $(L^{\infty}(A))^{*}$, $L^{1}(A)$ is a band in $(L^{\infty}(A))^{*}$.
\end{lemma}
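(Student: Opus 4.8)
The plan is to realise the image of $L^{1}(A)$ inside $(L^{\infty}(A))^{*}$ as the band of order continuous functionals, which is a band for entirely general reasons. Recall that the norm dual of a Banach lattice coincides with its order dual and is Dedekind complete, and that inside any such order dual the order continuous functionals form a band (see \cite{positive}, \cite{Meyer-Nieberg}). Writing $(L^{\infty}(A))^{*}_{\mathrm{oc}}$ for the band of order continuous functionals on $L^{\infty}(A)$, it therefore suffices to prove that, under the canonical embedding $g \mapsto \Lambda_{g}$, one has $L^{1}(A) = (L^{\infty}(A))^{*}_{\mathrm{oc}}$.

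The inclusion $L^{1}(A) \subset (L^{\infty}(A))^{*}_{\mathrm{oc}}$ is the easy half. Given $g \in L^{1}(A)$, its carrier $\{g \neq 0\}$ is $\sigma$-finite (as $\norm{g}_{1} < \infty$) and hence Maharam, and on this carrier $L^{\infty}$ has full support; so the order continuity of $\Lambda_{g}$ restricted to this carrier is precisely the content of Fact \ref{thm:Kothe_dual}. Since $\Lambda_{g}$ factors through the (order continuous) band projection onto $\{g \neq 0\}$ and annihilates the complementary band, this yields order continuity of $\Lambda_{g}$ on all of $L^{\infty}(A)$.

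For the reverse inclusion --- the main point --- let $\phi \in (L^{\infty}(A))^{*}_{\mathrm{oc}}$; decomposing $\phi$ into its positive and negative parts inside the band $(L^{\infty}(A))^{*}_{\mathrm{oc}}$, I may assume $\phi \geq 0$. Here semi-finiteness enters: the increasing net $\{1_{E}\}_{\mu(E)<\infty}$ has supremum $1_{A}$ in $L^{\infty}(A)$, because every set of positive measure contains one of finite positive measure, so any upper bound of the net must dominate $1_{A}$. Order continuity of $\phi$ then gives $\norm{\phi} = \phi(1_{A}) = \sup_{\mu(E)<\infty}\phi(1_{E}) < \infty$, so I may pick an increasing sequence $(E_{n})$ of sets of finite measure with $\phi(1_{E_{n}}) \to \norm{\phi}$ and set $A_{0} := \bigcup_{n}E_{n}$, a $\sigma$-finite set. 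Applying the Radon--Nikodym theorem on $A_{0}$ to the (countably additive, $\mu$-absolutely continuous) set function $E \mapsto \phi(1_{E})$ produces a $g \in L^{1}(A_{0}) \subset L^{1}(A)$ with $\phi(f) = \int_{A} fg\,d\mu$ for every $f \in L^{\infty}(A)$ supported in $A_{0}$. Finally, $\phi(1_{A_{0}}) = \lim_{n}\phi(1_{E_{n}}) = \norm{\phi} = \phi(1_{A})$ forces $\phi(1_{A \setminus A_{0}}) = 0$, and since $\phi \geq 0$ this makes $\phi$ vanish on the whole ideal generated by $1_{A \setminus A_{0}}$; hence $\phi = \Lambda_{g}$, as required.

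The step I expect to be the true obstacle is this representation direction, and specifically the passage from a merely semi-finite to a $\sigma$-finite situation. The device of concentrating $\phi$ on the $\sigma$-finite carrier $A_{0}$ --- made legitimate by the identity $\sup_{E}1_{E} = 1_{A}$, which is exactly where semi-finiteness is used --- is what permits Radon--Nikodym to be invoked, after which the order continuity and positivity of $\phi$ dispose of the complement. Once $L^{1}(A) = (L^{\infty}(A))^{*}_{\mathrm{oc}}$ is established, the band property of $L^{1}(A)$ is immediate.
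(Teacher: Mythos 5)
Your proof is correct, but it follows a genuinely different route from the paper's. The paper proves band-ness abstractly in two steps: for the ideal property it takes $0 \leq \Lambda \leq f$ with $f \in L^{1}(A)$, extends $\Lambda$ to a functional on $(L^{1}(A))^{*}$ dominated by $f$, and uses that $L^{1}(A)$, having order continuous norm, is an ideal in its bidual; for order closedness it uses that $L^{1}(A)$ has a Levi norm, so an increasing net $0 \leq f_{\alpha} \nearrow \Lambda$ is norm bounded and hence has its supremum already in $L^{1}(A)$. You instead identify the image of $L^{1}(A)$ with the band of order continuous functionals on $L^{\infty}(A)$, which is a band for general order-theoretic reasons; the substance of your argument is a Yosida--Hewitt-type representation theorem obtained by reducing to a $\sigma$-finite carrier (via the identity $\sup_{\mu(E)<\infty}1_{E}=1_{A}$, which is where semi-finiteness enters) and then invoking Radon--Nikod\'ym. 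Your reduction is genuinely needed, since Fact~\ref{thm:Kothe_dual} as stated requires a Maharam measure space while the lemma only assumes semi-finiteness. What the paper's argument buys is brevity and complete independence from measure-theoretic representation theorems --- only two abstract properties of $L^{1}$ are used. What your argument buys is more information: it shows that $L^{1}(A)$ is not merely \emph{a} band but precisely the band of order continuous functionals on $L^{\infty}(A)$, at the cost of redoing a Radon--Nikod\'ym argument and some care with nets versus sequences (e.g.\ countable additivity of $E \mapsto \phi(1_{E})$ and the passage from simple functions to all of $L^{\infty}$, both of which you handle correctly, the latter in fact needing only norm continuity). One small point worth making explicit for $\K=\C$: the reduction to $\phi \geq 0$ should pass through real and imaginary parts before taking positive and negative parts.
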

\begin{proof}
Recalling that $\ip{L^{1}(A)}{L^{\infty}(A)}$ is a norming K\"othe dual pair, we may view $L^{1}(A)$ and $L^{\infty}(A)$ as closed Riesz subspaces of $(L^{\infty}(A))^{*}$ and $(L^{1}(A))^{*}$, respectively.
Accordingly, let $J:(L^{\infty}(A))^{*} \stackrel{\simeq}{\longra} (L^{1}(A))^{**}/(L^{\infty}(A))^{\perp}$ be the canonical isometric lattice isomorphism and let $\pi:(L^{1}(A))^{**} \longra (L^{1}(A))^{**}/(L^{\infty}(A))^{\perp}$ be the natural map.

To see that $L^{1}(A)$ is an ideal in $(L^{\infty}(A))^{*}$, let $\Lambda \in (L^{\infty}(A))^{*}$ and $f \in L^{1}(A)$ be such that $0 \leq \Lambda \leq f$ in $(L^{\infty}(A))^{*}$. Then $f$ viewed as a functional on $(L^{1}(A))^{*}$ is positive and its restriction to $L^{\infty}(A)$ dominates the positive $\Lambda \in (L^{\infty}(A))^{*}$. Hence, $\Lambda$ has an extension to a functional $\tilde{\Lambda}$ on $(L^{1}(A))^{*}$ satisfying $0 \leq \tilde{\Lambda} \leq f$ in $(L^{1}(A))^{**}$. Since $L^{1}(A)$, having an order continuous norm, is an ideal in $(L^{1}(A))^{**}$, it follows that $\tilde{\Lambda} \in L^{1}(A)$. Therefore,
$\Lambda = J^{-1}(\pi(\tilde{\Lambda})) \in J^{-1}(\pi(L^{1}(A))) = L^{1}(A)$.

It remains to be shown that the ideal $L^{1}(A)$ in $(L^{\infty}(A))^{*}$ is also order closed in $(L^{\infty}(A))^{*}$. To this end,
let $\{f_{\alpha}\}_{\alpha} \subset L^{1}(A)$ be such that $0 \leq f_{\alpha} \nearrow \Lambda \in (L^{\infty}(A))^{*}$. Then we in particular have that $\{f_{\alpha}\}$ is an increasing positive norm bounded net in $L^{1}(A)$. From the fact that $L^{1}(A)$ has a Levi norm it now follows that $f_{\alpha} \nearrow f$ for some $f \in L^{1}(A)$. But then we must have $\Lambda = f \in L^{1}(A)$, as desired.
\end{proof}

We are now ready to prove Theorem \ref{thm:main_result}
\begin{proof}[Proof of Theorem \ref{thm:main_result}.]
We only need to establish existence of $T_{Y}$ and the norm estimates.

First observe that $S$ is regular. Indeed, letting $j:F \hookrightarrow G^{*}$ be the natural inclusion and letting
$\pi:(L^{\infty}(A))^{*} \longra L^{1}(A)$ be the map induced by Lemma \ref{lemma:L1_band}, we have that $S = \pi \circ T^{*} \circ j:F \longra L^{1}(A)$. Therefore, $S \in M(F,L^{1}(A)) = \mathcal{L}_{r}(F,L^{1}(A))$ as $\pi \geq 0$, $T^{*} \in M(G^{*},(L^{\infty}(A))^{*})$, and $j \geq 0$.

By Fact \ref{thm:ext_reg}, as $S \in \mathcal{L}(F,L^{1}(A))$ is regular,
$S \otimes I_{X}$ has an extension to an operator $S_{X} \in \mathcal{L}(F \tilde{\otimes} X,L^{1}(A;X))$ of norm $\norm{S}_{X} \leq \norm{S}_{reg}$. Letting $i:L^{\infty}(Y;X) \hookrightarrow L^{1}(A;X))^{*}$ and $j:G(Y) \hookrightarrow (F \tilde{\otimes} X)^{*}$ be the natural continuous inclusions,
we claim that (i) $(S_{X})^{*} \circ i$ extends $j \circ (T \otimes I_{Y})$ and (ii) $(S_{X})^{*}$ maps $i(L^{\infty}(Y;X))$ into $j(G(Y))$ and (iii) $j^{-1} \circ (S_{X})^{*} \circ i \in \mathcal{L}(L^{\infty}(A;Y),G(Y))$ of norm $\leq \norm{T}_{M(L^{\infty}(A),G)}$. Then (ii) tells us that $S_{X}$ has an adjoint $(S_{X})^{'}$ w.r.t. the dualities $\ip{F \tilde{\otimes} X}{G(Y)}$ and $\ip{L^{1}(A;X)}{L^{\infty}(A;Y)}$, which by (i) extends $T \otimes I_{Y}$. The norm inequality $\norm{T_{Y}} \leq \norm{T}_{M(L^{\infty}(A),G)}$ then follows from (iii).

For (i), let $h \in L^{\infty}(A) \otimes Y$ be arbitrary. Then an elementary computation shows that
$\ip{(T \otimes I_{Y})h}{f} = \ip{h}{S_{X}f}$ for all $f$ in the dense subspace $F \otimes X$ of $F \tilde{\otimes} X$, which by continuity extends to all $f \in F(X)$. This gives (i).

For (ii) and (iii) we denote by $V$ the linear space consisting of all countable step functions in $L^{\infty}(A;Y)$ equipped with the restricted norm of $L^{\infty}(A;Y)$. Let $R \in \mathcal{L}(L^{\infty}(A),G)$ be a positive operator dominating $T$ and
fix an arbitrary $h \in V$, say $h = \sum_{k=0}^{\infty}1_{A_{k}}y_{k}$ with $(A_{k})_{k \in \N}$ a mutually disjoint sequence in $\mathscr{A}$ and $(y_{k})_{k \in \N}$ a bounded sequence in $Y$.
Then note that, by Lemma \ref{lemma:pt_wk_limit}, $h = \sum_{k=0}^{\infty}1_{A_{k}} \otimes y_{k}$ with convergence in the $\sigma(L^{\infty}(A;Y),L^{1}(A;X))$-topology. From the weak$^{*}$ continuity of $(S_{X})^{*}$ as a Banach space adjoint operator and (i) it follows that
\begin{equation}\label{eq:adj_ext_S}
(S_{X})^{*}i(h) = \sum_{k=0}^{\infty}(S_{X})^{*}i(1_{A_{k}} \otimes y_{k}) \stackrel{(i)}{=}  \sum_{k=0}^{\infty}j(T1_{A_{k}} \otimes y_{k})
\end{equation}
with convergence in the weak$^{*}$-topology.

For the sequence $(T1_{A_{k}})_{k \in \N} \subset G$, $\sum_{k=0}^{\infty}|T1_{A_{k}}| \in G$ follows from the ideal property of $G$ and the estimate
\[
\sum_{k=0}^{\infty}|T1_{A_{k}}| \leq \sum_{k=0}^{\infty}R1_{A_{k}} = \lim_{K \to \infty}\sum_{k=0}^{K}R1_{A_{k}} =
\lim_{K \to \infty}R\left(\sum_{k=0}^{K}1_{A_{k}}\right) \leq R1 \in G.
\]
Via Lemma \ref{lemma:pt_wk_limit} we obtain convergence of the series $\sum_{k=0}^{\infty}T1_{A_{k}} \otimes y_{k}$ in $G(Y)$ w.r.t.
the $\sigma(G(Y),F \tilde{\otimes} X)$-topology together with a norm estimate of the resulting element of $G(Y)$:
\[
\norm{\sum_{k=0}^{\infty}T1_{A_{k}} \otimes y_{k}} \leq \norm{R1}\norm{(y_{k})}_{\infty} \leq \norm{R}\norm{h}.
\]
In combination with (\ref{eq:adj_ext_S}) this gives $(S_{X})^{*}i(h) \in j(G(Y))$ and $\norm{j^{-1}((S_{X})^{*}ih)} \leq \norm{R}\norm{h}$. As $h$ and $R$ were arbitrary, this shows that $(S_{X})^{*} \circ i$ maps $V$ into $G(Y)$ and that
$j^{-1} \circ ((S_{X})^{*} \circ i\big|_{V}) \in \mathcal{L}(V,G(Y))$ is of norm $\leq \norm{T}_{M(L^{\infty}(A),G)}$.
$V$ being a dense subspace of $L^{\infty}(A;Y)$ (see Lemma \ref{lemma:ct_step_dense}), $j^{-1} \circ ((S_{X})^{*} \circ i\big|_{V})$ has a unique extension to a bounded linear operator $Q$ from $L^{\infty}(A;Y)$ to $G(Y)$ of norm $\leq \norm{T}_{M(L^{\infty}(A),G)}$.
The observation that $j \circ Q$ and $(S_{X})^{*} \circ i$ coincide on the dense subspace $V$ of $L^{\infty}(A;Y)$ and consequently that
$j \circ Q = (S_{X})^{*} \circ i$ now yields (ii) and (iii).
\end{proof}

\section{An Application: Conditional Expectation on Banach Space-Valued $L^{\infty}$-Spaces}

Let $(A,\mathscr{A},\mu)$ be a measure space, $\mathscr{F} \subset \mathscr{A}$ a sub-$\sigma$-algebra, and $X$ a Banach space.
The conditional expectation operator on $L^{1}(A;X)$ with respect to $\mathscr{F}$ is the operator $\E^{1}_{\mathscr{F},X} \in \mathcal{L}(L^{1}(A))$ which assigns to an $f \in L^{1}(A;X)$ the unique $\E^{1}_{\mathscr{F},X}f \in L^{1}(A,\mathscr{F};X)$ satisfying
\begin{equation}
\int_{F} f\,d\mu = \int_{F} \E^{1}_{\mathscr{F},X}f\,d\mu, \quad\quad\quad F \in \mathscr{F};
\end{equation}
here we write $L^{1}(A,\mathscr{F};X)$ for the closed linear subspace of $L^{1}(A;X)$ consisting of all equivalence classes which have a strongly $\mathscr{F}$-measurable representative.
This operator is a contractive projection with range $L^{1}(A,\mathscr{F};X)$ and it can be obtained via bounded tensor extension of the conditional expectation operator $\E^{1}_{\mathscr{F}}$ on $L^{1}(A)$, which is a positive operator. We refer to \cite{haase}, where also pointwise convexity (Jensen-type) inequalities are proved for $X$-valued extensions of positive operators.

Now suppose that $(A,\mathscr{A},\mu)$ is semi-finite and that the restricted measure space $(A,\mathscr{F},\mu|_{\mathscr{F}})$ is Maharam; it can in fact be shown that $(A,\mathscr{A},\mu)$ is automatically semi-finite when $(A,\mathscr{F},\mu|_{\mathscr{F}})$ is Maharam. Given a Banach space $Y$, we would like to define the conditional expectation operator
on $L^{\infty}(A;Y)$ with respect to $\mathscr{F}$ as the operator $\E^{\infty}_{\mathscr{F},Y} \in \mathcal{L}(L^{\infty}(A;Y))$ which assigns to an $f \in L^{\infty}(A;Y)$ the unique $\E^{\infty}_{\mathscr{F},Y}f \in L^{\infty}(A,\mathscr{F};Y)$ satisfying
\begin{equation}\label{eq:def_cond_exp_L_infty}
\int_{F}f\,d\mu = \int_{F}\E^{\infty}_{\mathscr{F},Y}f\,d\mu, \quad\quad\quad F \in \mathscr{F}(\mu);
\end{equation}
here $\mathscr{F}(\mu) = \{ F \in \mathscr{F} \,:\, \mu(F) < \infty \}$.
In scalar case $Y=\K$ we can define $\E^{\infty}_{\mathscr{F}} = \E^{\infty}_{\mathscr{F},\K}$ by restriction to $L^{\infty}(A)$ of the Banach space adjoint $(\E^{1}_{\mathscr{F}})^{*} \in \mathcal{L}((L^{1}(A))^{*})$:
\begin{lemma}
The conditional expectation operator $\E^{1}_{\mathscr{F}}$ on $L^{1}(A)$ is a $\sigma(L^{1}(A),L^{\infty}(A))$-to-$\sigma(L^{1}(A),L^{\infty}(A))$ continuous linear operator whose adjoint is a positive contractive projection on $L^{\infty}(A)$ with range $L^{\infty}(A,\mathscr{F})$ satisfying the above definition of conditional expectation operator on $L^{\infty}(A)$.
\end{lemma}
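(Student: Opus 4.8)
The plan is to build the candidate operator on $L^{\infty}(A)$ by hand out of the Maharam duality for the sub-$\sigma$-algebra, and only afterwards recognise it as the restriction of the Banach space adjoint $(\E^{1}_{\mathscr{F}})^{*}$. First I would observe that, since $(A,\mathscr{A},\mu)$ is semi-finite, $\ip{L^{1}(A)}{L^{\infty}(A)}$ is a norming K\"othe dual pair (the embedding $L^{\infty}(A)\hookrightarrow (L^{1}(A))^{*}$ is isometric). By the boundedness criterion for weak continuity recalled in the preliminaries on Banach dual pairs, the assertion to be proved is \emph{exactly} that the bounded operator $\E^{1}_{\mathscr{F}}$ has Banach adjoint mapping $L^{\infty}(A)\subset (L^{1}(A))^{*}$ back into $L^{\infty}(A)$, together with the stated structural properties of this restriction. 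I would also record the defining property of $\E^{1}_{\mathscr{F}}$ in the form $\int_{A}(\E^{1}_{\mathscr{F}}f)\,u\,d\mu=\int_{A}f\,u\,d\mu$ for all $f\in L^{1}(A)$ and all $u\in L^{\infty}(A,\mathscr{F})$, which follows from the integral characterisation of $\E^{1}_{\mathscr{F}}$ by approximating $u$ by $\mathscr{F}$-simple functions.

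To manufacture the target operator, fix $g\in L^{\infty}(A)$ and consider the bounded linear functional $h\mapsto\int_{A}hg\,d\mu$ on $L^{1}(A,\mathscr{F})$, of norm at most $\norm{g}_{\infty}$. Because $(A,\mathscr{F},\mu|_{\mathscr{F}})$ is Maharam, $L^{1}(A,\mathscr{F})$ has full carrier and an order continuous norm, so Fact~\ref{thm:Kothe_dual} identifies $L^{\infty}(A,\mathscr{F})$ isometrically and as a lattice with $(L^{1}(A,\mathscr{F}))^{*}$. This produces a unique $\E^{\infty}_{\mathscr{F}}g\in L^{\infty}(A,\mathscr{F})$ with $\int_{A}hg\,d\mu=\int_{A}h\,(\E^{\infty}_{\mathscr{F}}g)\,d\mu$ for all $h\in L^{1}(A,\mathscr{F})$ and $\norm{\E^{\infty}_{\mathscr{F}}g}_{\infty}\leq\norm{g}_{\infty}$; taking $h=1_{F}$ with $F\in\mathscr{F}(\mu)$ recovers \eqref{eq:def_cond_exp_L_infty}. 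Positivity and contractivity of $g\mapsto\E^{\infty}_{\mathscr{F}}g$ are immediate from the lattice isomorphism, and since $\E^{\infty}_{\mathscr{F}}$ fixes $L^{\infty}(A,\mathscr{F})$ pointwise (by uniqueness of the representative) while mapping into it, it is a projection with range exactly $L^{\infty}(A,\mathscr{F})$.

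It then remains to identify $\E^{\infty}_{\mathscr{F}}$ with $(\E^{1}_{\mathscr{F}})^{*}\big|_{L^{\infty}(A)}$. For $f\in L^{1}(A)$ and $g\in L^{\infty}(A)$ I would chain the two defining relations:
\[
\ip{\E^{1}_{\mathscr{F}}f}{g} = \int_{A}(\E^{1}_{\mathscr{F}}f)\,g\,d\mu = \int_{A}(\E^{1}_{\mathscr{F}}f)\,(\E^{\infty}_{\mathscr{F}}g)\,d\mu = \int_{A}f\,(\E^{\infty}_{\mathscr{F}}g)\,d\mu = \ip{f}{\E^{\infty}_{\mathscr{F}}g},
\]
where the middle equality uses $\E^{1}_{\mathscr{F}}f\in L^{1}(A,\mathscr{F})$ tested against the defining relation for $\E^{\infty}_{\mathscr{F}}g$, and the third uses that $\E^{\infty}_{\mathscr{F}}g$ is bounded and $\mathscr{F}$-measurable tested against the defining relation for $\E^{1}_{\mathscr{F}}$. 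Hence $(\E^{1}_{\mathscr{F}})^{*}g=\E^{\infty}_{\mathscr{F}}g\in L^{\infty}(A)$ for every $g$, which simultaneously yields the $\sigma(L^{1}(A),L^{\infty}(A))$-to-$\sigma(L^{1}(A),L^{\infty}(A))$ continuity of $\E^{1}_{\mathscr{F}}$ and pins its adjoint down as $\E^{\infty}_{\mathscr{F}}$; the structural properties established in the previous paragraph then finish the proof.

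I expect the main obstacle to be precisely the point at which boundedness and $\mathscr{F}$-measurability of $(\E^{1}_{\mathscr{F}})^{*}g$ are forced: a priori this adjoint only lands in $(L^{1}(A))^{*}$, which for a merely semi-finite $(A,\mathscr{A},\mu)$ is strictly larger than $L^{\infty}(A)$ and contains non-order-continuous functionals. It is exactly the Maharam hypothesis on $(A,\mathscr{F},\mu|_{\mathscr{F}})$, through the duality $(L^{1}(A,\mathscr{F}))^{*}=L^{\infty}(A,\mathscr{F})$, that supplies a genuine bounded $\mathscr{F}$-measurable representative and thereby keeps the adjoint inside $L^{\infty}(A)$; everything else is bookkeeping with the two defining relations.
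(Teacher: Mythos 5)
Your proof is correct, and it rests on the same key ingredient as the paper's: the Maharam hypothesis on $(A,\mathscr{F},\mu|_{\mathscr{F}})$ supplies the isometric lattice identification $(L^{1}(A,\mathscr{F}))^{*} = L^{\infty}(A,\mathscr{F})$, which is precisely what forces the adjoint of $\E^{1}_{\mathscr{F}}$ to land back inside $L^{\infty}(A)$ rather than merely in $(L^{1}(A))^{*}$. The organization differs, though. The paper argues structurally: it uses that $\E^{1}_{\mathscr{F}}$ is a contractive positive projection with range $L^{1}(A,\mathscr{F})$, writes $L^{1}(A) = L^{1}(A,\mathscr{F}) \oplus U$, dualizes this decomposition wholesale to see that $(\E^{1}_{\mathscr{F}})^{*}$ is a positive contractive projection of $(L^{1}(A))^{*}$ onto $(L^{1}(A,\mathscr{F}))^{*} = L^{\infty}(A,\mathscr{F})$, and then simply restricts to the subspace $L^{\infty}(A)$ (which contains the range, so the restriction is again a projection onto $L^{\infty}(A,\mathscr{F})$). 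You instead build the operator $g \mapsto \E^{\infty}_{\mathscr{F}}g$ by hand as the Riesz-type representative of the restricted functional $h \mapsto \int_{A} hg\,d\mu$ on $L^{1}(A,\mathscr{F})$, read off positivity, contractivity and the projection property from the lattice isomorphism, and then verify the adjoint identity $\ip{\E^{1}_{\mathscr{F}}f}{g} = \ip{f}{\E^{\infty}_{\mathscr{F}}g}$ by a direct two-step computation. The paper's route is shorter and makes the projection structure of the adjoint on all of $(L^{1}(A))^{*}$ visible at once; yours is more elementary and has the advantage of producing the explicit defining relation $\int_{A} hg\,d\mu = \int_{A} h\,(\E^{\infty}_{\mathscr{F}}g)\,d\mu$ for all $h \in L^{1}(A,\mathscr{F})$, from which \eqref{eq:def_cond_exp_L_infty} (and its uniqueness, via density of the finite-measure $\mathscr{F}$-simple functions in $L^{1}(A,\mathscr{F})$ under semi-finiteness) falls out immediately. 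Your closing diagnosis of where the Maharam hypothesis is really used is exactly right and matches the role it plays in the paper's argument.
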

\begin{proof}
Recall that $\E_{\mathscr{G}}^{1}$ is a positive contractive projection on $L^{1}(A)$ with range $L^{1}(A,\mathscr{G})$; so $L^{1}(A) = L^{1}(A,\mathscr{G}) \oplus U$ where $U:= (1-\E_{\mathscr{G}}^{1})L^{1}(A)$. Since $L^{\infty}(A,\mathscr{G}) = (L^{1}(A,\mathscr{G}))^{*}$ (as $(A,\mathscr{G},\mu|_{\mathscr{G}})$ is Maharam), it follows that $(\E_{\mathscr{G}}^{1})^{*}$ is a positive contractive projection on $(L^{1}(A))^{*} = L^{\infty}(A,\mathscr{G}) \oplus U^{*}$ with range $L^{\infty}(A,\mathscr{G})$. Identifying $L^{\infty}(A)$ with a closed subspace of $(L^{1}(A))^{*}$ ($(A,\mathscr{A},\mu)$ is semi-finite), $(\E_{\mathscr{G}}^{1})^{*}$ restricts to a contractive projection on $L^{\infty}(A)$ with range $L^{\infty}(A,\mathscr{G})$.
\end{proof}

We can now obtain $\E_{\mathscr{G}}^{\infty,Y}$ from $\E^{\infty}_{\mathscr{F}}$
via an application of Theorem \ref{thm:main_result}:
\begin{prop}
Suppose that $(A,\mathscr{A},\mu)$ is semi-finite and that the restricted measure space $(A,\mathscr{F},\mu|_{\mathscr{F}})$ is Maharam.
For every Banach space $Y$ we have existence of the conditional expectation operator $\E^{\infty}_{\mathscr{F},Y}$ on $L^{\infty}(A;Y)$
(see \eqref{eq:def_cond_exp_L_infty}). $\E^{\infty}_{\mathscr{F},Y}$ is a contractive projection on $L^{\infty}(A;Y)$
with range $L^{\infty}(A,\mathscr{F};Y)$. Moreover, if $\ip{X}{Y}$ is a Banach dual pair, then
we have
\begin{equation*}\label{eq:conditional_exp_duality}
\int \ip{f}{\E_{\mathscr{F},Y}^{\infty}g}\,d\mu = \int \ip{\E_{\mathscr{F},X}^{1}f}{g}\,d\mu, \quad\quad\quad f \in L^{1}(A;X), g \in L^{\infty}(A;Y).
\end{equation*}
\end{prop}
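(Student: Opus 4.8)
The plan is to derive every assertion from Theorem \ref{thm:main_result} applied to the scalar operator $T = \E^{\infty}_{\mathscr{F}}$ on $L^{\infty}(A)$. First I would check that the hypotheses of that theorem hold. Take $\ip{F}{G} = \ip{L^{1}(A)}{L^{\infty}(A)}$, which is a (norming) K\"othe dual pair because $(A,\mathscr{A},\mu)$ is semi-finite, so that $G = L^{\infty}(A)$ and $G(Y) = L^{\infty}(A;Y)$. By the preceding lemma $\E^{\infty}_{\mathscr{F}}$ is a positive operator (hence dominated by itself, with $\norm{\E^{\infty}_{\mathscr{F}}}_{M} = \norm{\E^{\infty}_{\mathscr{F}}} = 1$) which is $\sigma(L^{\infty}(A),L^{1}(A))$-to-$\sigma(L^{\infty}(A),L^{1}(A))$ continuous with adjoint $S = \E^{1}_{\mathscr{F}} \in \mathcal{L}(L^{1}(A))$. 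Applying the theorem with the canonical pair $\ip{Y^{*}}{Y}$ then yields a contraction $\E^{\infty}_{\mathscr{F},Y} \in \mathcal{L}(L^{\infty}(A;Y))$, the $Y$-valued extension of $\E^{\infty}_{\mathscr{F}}$, of norm $\norm{\E^{\infty}_{\mathscr{F},Y}} \leq 1$; since $L^{1}(A)$ has an order continuous norm, its adjoint $S_{Y^{*}}$ is the unique bounded extension of $\E^{1}_{\mathscr{F}} \otimes I_{Y^{*}}$ on $L^{1}(A;Y^{*}) = L^{1}(A) \tilde{\otimes} Y^{*}$, that is, the $L^{1}$-conditional expectation $\E^{1}_{\mathscr{F},Y^{*}}$.

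Next I would verify the defining relation \eqref{eq:def_cond_exp_L_infty}. Fix $g \in L^{\infty}(A;Y)$ and $F \in \mathscr{F}(\mu)$; both $\int_{F}g\,d\mu$ and $\int_{F}\E^{\infty}_{\mathscr{F},Y}g\,d\mu$ exist as Bochner integrals since $\mu(F) < \infty$. For every $y^{*} \in Y^{*}$ the extension identity $\ip{y^{*}}{\E^{\infty}_{\mathscr{F},Y}g} = \E^{\infty}_{\mathscr{F}}\ip{y^{*}}{g}$ together with the scalar defining property applied to $\ip{y^{*}}{g} \in L^{\infty}(A)$ gives $\ip{y^{*}}{\int_{F}\E^{\infty}_{\mathscr{F},Y}g\,d\mu} = \int_{F}\E^{\infty}_{\mathscr{F}}\ip{y^{*}}{g}\,d\mu = \int_{F}\ip{y^{*}}{g}\,d\mu = \ip{y^{*}}{\int_{F}g\,d\mu}$; as $Y^{*}$ separates the points of $Y$, \eqref{eq:def_cond_exp_L_infty} follows.

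The projection statement and the identification of the range are where the real work sits, and here I would exploit the explicit form of the extension on countable step functions coming from the proof of Theorem \ref{thm:main_result}. For $h = \sum_{k}1_{A_{k}} \otimes y_{k} \in L^{\infty}(A;Y)$ with $(A_{k})_{k} \subset \mathscr{A}$ disjoint and $(y_{k})_{k}$ bounded, that proof (via Lemma \ref{lemma:pt_wk_limit}) shows $\E^{\infty}_{\mathscr{F},Y}h = \sum_{k}(\E^{\infty}_{\mathscr{F}}1_{A_{k}}) \otimes y_{k}$ pointwise a.e.\ in $Y$. Each $\E^{\infty}_{\mathscr{F}}1_{A_{k}}$ is $\mathscr{F}$-measurable, so the partial sums, and hence their a.e.\ limit, are $\mathscr{F}$-measurable; thus $\E^{\infty}_{\mathscr{F},Y}h \in L^{\infty}(A,\mathscr{F};Y)$. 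Since the countable step functions are dense in $L^{\infty}(A;Y)$ (Lemma \ref{lemma:ct_step_dense}), $\E^{\infty}_{\mathscr{F},Y}$ is bounded, and $L^{\infty}(A,\mathscr{F};Y)$ is closed, the range is contained in $L^{\infty}(A,\mathscr{F};Y)$. Restricting the same formula to $\mathscr{F}$-measurable countable step functions (so $A_{k} \in \mathscr{F}$ and $\E^{\infty}_{\mathscr{F}}1_{A_{k}} = 1_{A_{k}}$) shows $\E^{\infty}_{\mathscr{F},Y}h = h$ there; as these are dense in $L^{\infty}(A,\mathscr{F};Y)$ (Lemma \ref{lemma:ct_step_dense} over $(A,\mathscr{F},\mu|_{\mathscr{F}})$), $\E^{\infty}_{\mathscr{F},Y}$ restricts to the identity on $L^{\infty}(A,\mathscr{F};Y)$. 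Together these give that $\E^{\infty}_{\mathscr{F},Y}$ is a contractive projection with range exactly $L^{\infty}(A,\mathscr{F};Y)$.

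Finally, for the duality with an arbitrary Banach dual pair $\ip{X}{Y}$, I would apply Theorem \ref{thm:main_result} once more, now with $\ip{X}{Y}$, to obtain the $Y$-valued extension $T_{Y}$ with respect to $\ip{X}{Y}$ whose adjoint is the unique bounded extension of $\E^{1}_{\mathscr{F}} \otimes I_{X}$, namely $\E^{1}_{\mathscr{F},X}$ on $L^{1}(A;X) = L^{1}(A) \tilde{\otimes} X$. On the dense subspace of countable step functions both $T_{Y}$ and $\E^{\infty}_{\mathscr{F},Y}$ are given, by the computation of the proof of Theorem \ref{thm:main_result}, by the same $X$-independent formula $\sum_{k}(\E^{\infty}_{\mathscr{F}}1_{A_{k}}) \otimes y_{k}$, so $T_{Y} = \E^{\infty}_{\mathscr{F},Y}$. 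The adjoint relation of Lemma \ref{eq:relation_ext_S_T} for this pair, written out with the K\"othe--Bochner pairing $\ip{L^{1}(A;X)}{L^{\infty}(A;Y)}$, is exactly $\int \ip{f}{\E^{\infty}_{\mathscr{F},Y}g}\,d\mu = \int \ip{\E^{1}_{\mathscr{F},X}f}{g}\,d\mu$ for $f \in L^{1}(A;X)$ and $g \in L^{\infty}(A;Y)$, as claimed. The main obstacle is the range/projection step: it cannot be read off from the abstract statement of Theorem \ref{thm:main_result} and forces one to descend to countable step functions and invoke the density Lemma \ref{lemma:ct_step_dense}.
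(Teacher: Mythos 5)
Your proposal is correct and follows the paper's own route: both hinge on applying Theorem \ref{thm:main_result} to $T=\E^{\infty}_{\mathscr{F}}$ and $S=\E^{1}_{\mathscr{F}}$ and reading off the duality formula from the adjoint relation between $T_{Y}$ and $S_{X}=\E^{1}_{\mathscr{F},X}$. The paper compresses the verification that $T_{Y}$ satisfies \eqref{eq:def_cond_exp_L_infty} and is a contractive projection onto $L^{\infty}(A,\mathscr{F};Y)$ into ``it is not difficult to see''; your descent to countable step functions via Lemma \ref{lemma:ct_step_dense} and Lemma \ref{lemma:pt_wk_limit} supplies exactly those omitted details.
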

\begin{proof}
Since there always exists a Banach space $X$ for which there is Banach dual pairing $\ip{X}{Y}$ (just take $X=Y^{*}$),
we may prove the first and second assertion at the same time. Applying Theorem \ref{thm:main_result} to
$S=\E_{\mathscr{F}}^{1} \in \mathcal{L}(L^{1}(A))$ and $T=\E^{\infty}_{\mathscr{F}} \in \mathcal{L}(L^{\infty}(A))$,
we get contractions $S_{X} \in \mathcal{L}(L^{1}(A;X))$ and $T_{Y} \in \mathcal{L}(L^{\infty}(A;Y))$ with respect to the duality $\ip{L^{1}(A;X)}{L^{\infty}(A;Y)}$ such that $S_{X}$ is the unique bounded extension of $S \otimes I_{X}$ and
\begin{equation*}\label{eq:Banach-valued_ext_cond_exp_L_infty}
\ip{x}{T_{Y}f} = T\ip{x}{f}, \quad\quad\quad x \in X, f \in L^{\infty}(A;Y).
\end{equation*}
Recalling that $\E^{1}_{\mathscr{F},X} = S_{X}$, it is not difficult to see that we can take $\E^{\infty}_{\mathscr{F},Y} := T_{Y}$.
\end{proof}

For a different and more direct way to define the conditional expectation operator on Banach-valued $L^{\infty}$-spaces
we refer to \cite{dinc} (also see the references therein).

\subsection*{Acknowledgment}
The author would like to thank Mark Veraar for making him aware of Fact~\ref{fact:bases_reflexivity}.(II).

\bibliographystyle{plain}

\end{document}